\numberwithin{equation}{section}
\newcommand{\supp}{\operatorname{supp}}
\newcommand{\Aaa}{{\mathcal{A}}}
\newcommand{\N}{\mathbb{N}}
 \newcommand{\cM}{\mathcal M}
\newcommand{\dist}{\operatorname{dist}}
\newcommand{\tr}{\operatorname{Tr}}
\providecommand{\dist}{\operatorname{dist}}
\newtheorem{assump}{}
\let\TeXchi\chi
\newbox\chibox
\chibox \hbox{\raise\dp0 \box 0 }
\def\chi{\copy\chibox}
\newtheoremstyle{break}
  {\topsep}{\topsep}%
  {\itshape}{}%
  {\bfseries}{}%
  {\newline}{}%
\theoremstyle{break}
\newcommand{\Assump}[1]{\textup{(Assumption~\ref{#1})}}
\theoremstyle{definition}
\newcommand{\R}{\mathbb{R}}
\theoremstyle{plain}
\newtheorem{theorem}{Theorem}[section]
\newtheorem{lemma}[theorem]{Lemma}
\newtheorem{proposition}[theorem]{Proposition}
\newtheorem{corollary}[theorem]{Corollary}
\theoremstyle{definition}
\newtheorem{definition}[theorem]{Definition}
\theoremstyle{remark}
\newtheorem{remark}[theorem]{Remark}
\numberwithin{equation}{section}
\title[Quantitative Hopf-Oleinik lemma for degenerate fully nonlinear operators]{A quantitative Hopf-Oleinik lemma for degenerate fully nonlinear operators and applications to free boundary problems}
\author[Davide Giovagnoli, Enzo Maria Merlino, Diego Moreira]{Davide Giovagnoli$^{\orcidlink{0009-0005-3375-5199}}$, Enzo Maria Merlino$^{\orcidlink{0000-0001-8501-9613}}$, \and Diego Moreira}
\newcommand{\Addresses}{{
  \bigskip
  \footnotesize

    Davide Giovagnoli {\orcidlink{0009-0005-3375-5199}}, \textsc{Dipartimento di Matematica, Universit\`a di Bologna, Piazza di Porta S.Donato 5, 40126, Bologna, BO, Italy}\par\nopagebreak
  \textit{E-mail address:} {\tt d.giovagnoli@unibo.it}\\

  \medskip

    Enzo Maria Merlino \orcidlink{0000-0001-8501-9613}, \textsc{Dipartimento di Matematica, Universit\`a di Bologna, Piazza di Porta S.Donato 5, 40126, Bologna, BO, Italy}\par\nopagebreak
  \textit{E-mail address:} {\tt enzomaria.merlino2@unibo.it}\\
\medskip

  Diego Moreira, \textsc{Departamento de Matematica,
Universidade Federal do Ceará, Av. Humberto Monte Pici
60455-760, Fortaleza, CE, Brasil}\par\nopagebreak
  \textit{E-mail address:} {\tt dmoreira@mat.ufc.br}\\

 }

}
\subjclass[2020]{Primary 35J70, 35J60, 35R35. Secondary 35D40, 35R45, 35R50.}
\keywords{Degenerate elliptic equations, Hopf-Oleinik lemma, fully nonlinear elliptic equations, one-phase free boundary problem, flame propagation.}
\date{\today}
\dedicatory{Dedicated to Sandro Salsa on his $75^{th}$ birthday}
\begin{document}

\begin{abstract}
We prove a quantitative inhomogeneous Hopf-Oleinik lemma for viscosity solutions of 
\[
|\nabla u|^{\alpha}F(D^{2}u)=f
\] and, more generally, for viscosity supersolutions of
$|\nabla u|^{\alpha}\,\mathcal M^-_{\lambda,\Lambda}(D^{2}u)\le f$.
The result yields linear boundary growth with universal constants depending only on the structural data.
We also exhibit a counterexample showing that the Hopf lemma fails for equations that act only in the large–gradient regime (in the sense of Imbert and Silvestre), thereby delineating the scope of our theorem.
As applications, we obtain Lipschitz regularity for viscosity solutions of one–phase Bernoulli free boundary problems driven by these degenerate fully nonlinear operators and derive $\varepsilon$–uniform Lipschitz bounds for a one–phase flame propagation model.
\end{abstract}
\maketitle

\section{Introduction}

The Hopf-Oleinik lemma is a cornerstone of the theory of elliptic partial differential equations. In its classical form, independently proved by Hopf in \cite{hopf1952} and Oleinik in \cite{Oleinik1952}, it asserts that a nonnegative supersolution of a second order uniformly elliptic equation that vanishes at a boundary point must exhibit a strictly positive inward normal derivative at that point. This qualitative principle underlies the strong maximum principle and has wide impact on boundary regularity and free boundary analysis. Throughout this paper, we use the \emph{ball as the prototype of a $C^{1,1}$ domain}, which captures the local geometry near a smooth boundary point and suffices for the local argument.

Our main goal is to prove a \emph{quantitative} Hopf-Oleinik lemma for 
viscosity solutions of 
\begin{equation}\label{eq:mainB_1}
|\nabla u|^{\alpha}\,F(D^2u)=f\quad \text{in }B_1,
\end{equation}
where $F$ is a second-order fully nonlinear elliptic operator and $\alpha\geq 0$.
More generally, this result applies to viscosity supersolutions of
\begin{equation} \label{eq:main_supersol}
|\nabla u|^{\alpha}\,\mathcal M^-_{\lambda,\Lambda}(D^{2}u)\le f \quad \text{in }B_1,
\end{equation}
where \(\mathcal M^-_{\lambda,\Lambda}\) denotes the Pucci minimal operator with ellipticity constants $0<\lambda \leq \Lambda$.
In this setting, whenever \(u\ge0\) in \(B_1\) and \(u(x_0)=0\) for some \(x_0\in\partial B_1\), the Hopf principle holds \emph{quantitatively}: namely there exist \(\varepsilon>0\) and \(C>0\), depending only on $n,\lambda,\Lambda$ and $\alpha$, such that
\[
\|u\|_{L^{\varepsilon}(B_{1/2})}
\;\le\;
C\bigl(
\partial_\nu u(x_0)
+\|f^{+}\|_{L^{\infty}(B_1)}^{\frac{1}{1+\alpha}}
\bigr),
\]
which recovers the classical Hopf-Oleinik lemma when \(f\le0\).
In addition, we obtain a non-infinitesimal quantitative inhomogeneous Hopf-Oleinik inequality. More precisely, there exist constants \(A_{1},A_{2}>0\) and \(\varepsilon>0\), depending only on \(n,\lambda,\Lambda,\alpha\), such that for all \(x\in B_{1}\)
\[
u(x)\ge
\left(
A_{1}\,\|u\|_{L^{\varepsilon}(B_{1/2})}
-
A_{2}\,\|f^{+}\|_{L^{\infty}(B_{1})}^{\frac{1}{1+\alpha}}
\right)
\operatorname{dist}(x,\partial B_{1}).
\]

Furthermore, we will show how these results can be used to face regularity issues in free boundary problems, including Lipschitz regularity of the one-phase free boundary problem driven by degenerate elliptic operators and $\varepsilon$-uniform estimates for a singular perturbation model arising in combustion theory.
We also analyze the failure of the Hopf-Oleinik lemma for equations that are enforced only in the large-gradient regime, in the sense of Imbert and Silvestre in \cite{Imbert-Silvestre2016}.
These applications will be presented in more detail in the next section.

The \emph{quantitative} Hopf-Oleinik lemma for harmonic functions appears pervasively within the variational and viscosity theory of Bernoulli–type free boundary problems: in the variational approach of Alt and Caffarelli \cite{AC} and Alt, Caffarelli and Friedman \cite{ACF}, and in the viscosity framework developed by Luis Caffarelli in his foundational papers \cite{C1,C2,C3}. In view of this dual presence, it naturally appears, and is treated systematically, in the monograph of Caffarelli and Salsa \cite{CSbook}, where it is used as a key tool in the general theory of free boundary problems. Moreover, the scope of application of the Hopf lemma is considerably broader, being closely connected to boundary Harnack–type inequalities and to boundary Krylov-type regularity theorems for uniformly elliptic second order equations. A quantitative version of this Hopf-Oleinik principle was established by Berestycki, Caffarelli and Nirenberg in \cite[Theorem 2.2]{BCN}, where it provides $\varepsilon$–uniform Lipschitz estimates for regularized free–boundary problems of flame–propagation type in combustion theory. We refer to the works of Sirakov \cite{S1,S2,S3} on boundary Harnack–type inequalities, which encode the quantitative version of the Hopf–Oleinik lemma. In the fully nonlinear setting, these results are indeed more general. The literature contains several contributions concerning extensions of the quantitative Hopf–Oleinik lemma to broader classes of operators; see, for instance, \cite{BragaMoreiraAdv,MoreiraSantosSoares2024Fenici,Lian2024} for certain elliptic equations, and \cite{caffarelli2013some,torres2024parabolic,ferrari2025geometry,jesus2025fully} for some results in the parabolic setting. In this sense, our research is related to some of the ideas developed for the $g$-Laplacian in \cite{BragaMoreiraAdv}.

In this work, we focus on a class of fully nonlinear operators whose ellipticity degenerates along the sets of critical points. The equation \eqref{eq:mainB_1} belongs to a larger class of nonlinear elliptic equations considered in a series of work of Birindelli and Demengel, starting with \cite{Birindelli-Demengel2004} and then followed by \cite{Birindelli-Demengel2007,Birindelli-Demengel2009,Birindelli-Demengel2010,birindelliDemengel2010uniqueness,Birindelli2010eigenfunctions}. 
This prolific line of research has focused on topics such as the comparison principle, Liouville-type theorems, and issues related to existence and regularity of solutions for this class of equations, including the singular case, that is, for $-1<\alpha<0$. Since it is not possible to provide a complete review of all the works in this area, we only mention a few illustrative examples. For this we refer to \cite{davila2009alexandroff,davila2010harnack,junges2010aleksandrov,Imbert-Silvestre2013,araujo2015geometric}.
For this class of operators, a Hopf-type lemma has been established in \cite[Lemma 3.9]{birindelli2013overdetermined}. 
Our results extend, in a more precise and quantitative way, those in \cite{birindelli2013overdetermined}, while also covering the inhomogeneous case without any sign restriction on the right-hand side.
This quantitative refinement is fundamental for dealing with regularity and estimates in free boundary problems. 

This paper, aiming to be entirely self-contained, provides full proofs of all the results. In particular, since we were not able to find a reference that systematically addresses the weak Harnack inequality and the Harnack inequality for these operators in the case $\alpha\geq 0$, we include this result for completeness in Section \ref{s:5}. The Harnack inequality in the degenerate case in dimension two was obtained by Birindelli and Demengel in \cite{Birindelli2010eigenfunctions}, while in the singular case it was established by Dávila, Felmer, and Quaas in \cite{davila2010harnack}.
It is probably well known by experts in the subject that these inequalities can be obtained from the results of Imbert and Silvestre for equations that hold only where the gradient is large, but for the sake of completeness, we provide a proof.

Following the same principle, in Section \ref{s:glue-SVA} we provide a gluing result for Sobolev functions across rough interfaces, which in particular yields the Sobolev regularity of the extension of the positive phase \(u^{+}\). Although this type of result is commonly known in the literature for functions whose traces coincide along Lipschitz hypersurfaces, see for instance \cite[Lemma A.8]{KinderlehrerStampacchia} and \cite[Theorem 5.8]{EvansGariepy} for $BV$ functions or \cite[Theorem 3.84]{AmbrosioFuscoPallara} for finite perimeter interface, we could not find it in the specific formulation needed for the study of the one-phase Bernoulli problem without additional assumptions on the free boundary. Therefore, we decided to include it as an auxiliary result, which might be of independent interest.

Finally, in Appendix  \ref{appendixA}, we reconcile the notions of classical \(C^{1,\gamma}\) regularity, \(C^{1,\gamma}\) via polynomial approximation, and Campanato \(C^{1,\gamma}\), providing sharper, fully quantified estimates.
A version of this comparison appears in the appendix of \cite{BFM}, and here we complete and refine the bounds.

\textbf{Organization of the paper.}
Section~\ref{s:2} states our quantitative Hopf-Oleinik results in the degenerate fully nonlinear setting, including the formulation for viscosity supersolutions of $|\nabla u|^{\alpha}\,\mathcal M^-_{\lambda,\Lambda}(D^{2}u)\le f$ and their normal--derivative/sup variants. Building on these results, this section presents the applications to free boundary problems. Section~\ref{sec:notation} fixes notation and the framework. We also discuss the notion of viscosity solution, introduced by Birindelli and Demengel and its relation to the viscosity notion in the large gradient regime of Imbert and Silvestre. 
Section~\ref{s:4} builds an annular Pucci barrier with quantitative geometric control (height and slope). Section~\ref{s:5} establishes a weak $L^{\varepsilon}$ estimate and a Harnack inequality for the equation \eqref{eq:mainB_1}.  Section \ref{s:6} gives the proof of the main quantitative Hopf-Oleinik lemma. 
Section~\ref{s:7} presents a counterexample showing that Hopf fails for equations acting only on the large-gradient set.
Section~\ref{s:8} derives interior gradient bounds for \eqref{eq:mainB_1}. Section~\ref{s:glue-SVA} proves a Sobolev gluing lemma across rough interfaces.
Section~\ref{s:10} applies the main result to obtain Lipschitz regularity in a one phase free boundary problem. Section~\ref{s:11} treats a flame-propagation model and obtains $\varepsilon$-uniform Lipschitz bounds. Appendix \ref{appendixA} provides a Campanato-type criterion implying classical $C^{1,\omega}$ regularity.

\section{Main Results}\label{s:2}

This section collects the main results of the paper. We begin with the quantitative, inhomogeneous Hopf–Oleinik lemma for the degenerate fully nonlinear equation \eqref{eq:mainB_1} and its weakened version for viscosity supersolutions of \eqref{eq:main_supersol}.

\begin{theorem}[Quantitative Hopf-Oleinik lemma]\label{HOL-quantitative}
	Let $\alpha\ge0$ and $0<\lambda\le\Lambda$.
	Let $u\in C(\overline{B_1})$ be a viscosity supersolution of
	\begin{equation}\label{eq-HOL}
	|\nabla u|^{\alpha}\,\mathcal M^-_{\lambda,\Lambda}(D^2u)\le f
	\qquad\text{in }B_1,
	\end{equation}
	where $f\in C(B_1)\cap L^\infty(B_1)$ and $u\ge0$ in $B_1$.
	Then there exist constants $\varepsilon\in(0,1)$ and $A_1,A_2,C>0$,
	depending only on $n,\lambda,\Lambda,\alpha$, such that:
	\begin{enumerate}
		\item[\textnormal{(A)}] \textbf{Linear growth with respect to the distance from the boundary.}
		For every $x\in B_1$,
		\begin{equation}\label{eq:HOL-1}
		u(x)\;\ge\;
		\left(
		A_1\,\|u\|_{L^{\varepsilon}(B_{1/2})}
		-A_2\,\|f^{+}\|_{L^{\infty}(B_1)}^{\frac{1}{1+\alpha}}
		\right)
		\operatorname{dist}\bigl(x,\partial B_1\bigr).
		\end{equation}
		
		\item[\textnormal{(B)}] \textbf{Control by the interior normal derivative.}
		If for some $x_0\in\partial B_1$ with $u(x_0)=0$ the interior normal derivative
		$\partial_\nu u(x_0)$ exists, then
		\begin{equation}\label{eq:HOL-2}
		\|u\|_{L^{\varepsilon}(B_{1/2})}
		\;\le\;
		C\left(
		\partial_\nu u(x_0)
		+\|f^{+}\|_{L^{\infty}(B_1)}^{\frac{1}{1+\alpha}}
		\right).
		\end{equation}
		
		\item[\textnormal{(C)}] \textbf{Supremum version for a double differential inequality.}
		If $u$ satisfies in $B_1$ the two–sided Pucci inequality
		\begin{equation}\label{eq:HOL-3}
		\,|\nabla u|^{\alpha}\,\mathcal M^-_{\lambda,\Lambda}(D^2u)
		\;\le\; f
		\;\le\;
		|\nabla u|^{\alpha}\,\mathcal M^+_{\lambda,\Lambda}(D^2u),
		\end{equation}
		then the estimates of \textnormal{(A)} and \textnormal{(B)} hold with
		$\|u\|_{L^{\varepsilon}(B_{1/2})}$ replaced by $\displaystyle\sup_{B_{1/2}} u$.
		
		\item[\textnormal{(D)}] \textbf{Unique continuation.}
		If $f\le0$ in $B_1$ and, for some $x_0\in\partial B_1$,
		the interior normal derivative $\partial_\nu u(x_0)$ exists and $\partial_\nu u(x_0)=0$, then
		\[
		u\equiv0
		\qquad\text{in }B_1.
		\]
	\end{enumerate}
\end{theorem}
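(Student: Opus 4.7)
The plan is to combine the weak $L^{\varepsilon}$ estimate and the Harnack inequality of Section~\ref{s:5} with the quantitative annular Pucci barrier of Section~\ref{s:4} via a comparison argument. Set $\sigma := \|u\|_{L^{\varepsilon}(B_{1/2})}$ and $\rho := \|f^{+}\|_{L^{\infty}(B_1)}^{1/(1+\alpha)}$. For part (A), the weak $L^{\varepsilon}$ estimate of Section~\ref{s:5} gives $\sigma \le C_1(m_0 + \rho)$, where $m_0 := \inf_{B_{3/4}} u$; equivalently, $m_0 \ge c_1 \sigma - c_2 \rho$. Let $\phi$ denote the annular Pucci barrier of Section~\ref{s:4} on $A := B_1 \setminus \overline{B_{3/4}}$, normalized so that $\phi = 0$ on $\partial B_1$, $\phi = 1$ on $\partial B_{3/4}$, $\phi(x) \ge c_{\flat}\,\dist(x,\partial B_1)$ in $A$, and $|\nabla \phi|^{\alpha}\,\mathcal M^{-}_{\lambda,\Lambda}(D^2 \phi) \ge c_{\star} > 0$ in $A$. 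By homogeneity of the Pucci operator and of $p\mapsto|p|^{\alpha}$, the function $m\phi$ is a viscosity subsolution of \eqref{eq-HOL} in $A$ whenever $m \ge K \rho$, with $K := c_{\star}^{-1/(1+\alpha)}$. I split into two cases: if $m_0 < K\rho$, the weak Harnack already yields $\sigma \le C_1(K+1)\rho$, so that \eqref{eq:HOL-1} holds trivially provided $A_2 \ge A_1 C_1 (K+1)$; otherwise, setting $m := m_0$, the function $m\phi$ is a subsolution lying below $u$ on $\partial A$, and the comparison principle gives $u \ge m_0 \phi$ in $A$, which together with the bound $u \ge m_0$ in $B_{3/4}$ produces $u(x) \ge c\, m_0\, \dist(x,\partial B_1)$ throughout $B_1$.

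Part (B) follows at once by specializing (A) to $x = x_0 - t\nu$, where $\nu$ is the outer unit normal at $x_0$, and letting $t \to 0^{+}$: using $u(x_0) = 0$, the existence of $\partial_\nu u(x_0)$, and dividing by $t$, the resulting inequality is precisely the rearrangement of \eqref{eq:HOL-2}. Part (C) has the same structure, except that the two-sided Pucci inequality \eqref{eq:HOL-3} permits the full Harnack inequality of Section~\ref{s:5} to replace the weak $L^{\varepsilon}$ estimate, converting $\sup_{B_{1/2}} u$ into $\inf_{B_{3/4}} u$ and leaving the remainder of the argument untouched.

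Finally, part (D) is a direct consequence of (B): when $f \le 0$ one has $\rho = 0$, so $\partial_\nu u(x_0) = 0$ forces $\sigma = 0$ and hence $u \equiv 0$ in $B_{1/2}$; propagation to all of $B_1$ then follows from the strong minimum principle for nonnegative viscosity supersolutions of $|\nabla u|^{\alpha}\,\mathcal M^{-}_{\lambda,\Lambda}(D^2 u) \le 0$ applied along a chain of overlapping balls, the strong minimum principle itself being a standard consequence of the weak Harnack inequality of Section~\ref{s:5}. The hard part I anticipate is the comparison step in (A), where the degeneracy of the operator on $\{\nabla u = 0\}$ obstructs the usual comparison theory; the crucial point is that the quantitative lower bound $|\nabla \phi|^{\alpha}\,\mathcal M^{-}(D^2 \phi) \ge c_{\star} > 0$ (and in particular $|\nabla \phi| > 0$) in the annulus makes $m\phi$ a \emph{strict} subsolution as soon as $m > K\rho$, bringing the problem within the scope of the comparison theory developed by Birindelli and Demengel for this class of degenerate fully nonlinear operators.
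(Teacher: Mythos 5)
Your argument is correct and follows essentially the same route as the paper: the weak Harnack estimate combined with the annular barrier and a comparison/contradiction step for (A), the limit along the inner normal for (B), the full Harnack inequality for (C), and propagation of the zero set for (D). Only minor adjustments are needed: working with $\inf_{B_{3/4}}u$ and the annulus $B_1\setminus\overline{B_{3/4}}$ requires extending the stated Section~4--5 results to those radii (the paper uses $\inf_{B_{1/2}}u$ and $B_1\setminus\overline{B_{1/2}}$, so its lemmas apply verbatim); the borderline case $m_0=K\rho$ should go into the trivial branch or be handled by comparing with $(1-\delta)m_0\phi$ and letting $\delta\to0$, since strictness is what drives the contact-point contradiction (the paper splits its cases strictly for exactly this reason); and for (D) the paper instead removes the gradient weight via Imbert--Silvestre and invokes the Caffarelli--Cabr\'e strong maximum principle, though your weak-Harnack chain argument is an equally valid alternative.
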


The following result shows that the Hopf lemma does not hold for equations acting only on the large-gradient set, in the sense of Imbert and Silvestre, see \cite{Imbert-Silvestre2016}.
 
\begin{proposition}[Failure of the Hopf lemma in the large gradient regime]\label{prop:hopf-fails-large-gradient-only}
	Fix $n\ge2$ and set $u(x)=(1-|x|)^2$. Then
	\[
	u\in C(\overline{B_1})\cap C^\infty(B_1\setminus\{0\}),\qquad
	u\ge0\ \text{in }B_1,\quad u=0\ \text{on }\partial B_1,
	\]
	and, in the viscosity sense,
	\[
	\Delta u\ \le\ 0 \quad \text{in }\ \{x\in B_1:\ |\nabla u(x)|\ge 1\}.
	\]
	Moreover, for every $\xi\in\partial B_1$, if $\nu$ denotes the inner unit normal at $\xi$, then $\partial_\nu u(\xi)=0$.
	Hence, Hopf’s boundary point conclusion fails in this large gradient regime.
\end{proposition}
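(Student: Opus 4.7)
The plan is to verify all four claims by direct computation from the explicit formula $u(x)=(1-|x|)^2$, handling the single non-smooth point $x=0$ by a touching-from-below obstruction argument.

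First I would record the elementary properties and the classical calculus. The function $u$ is continuous on $\overline{B_1}$, $C^\infty$ on $B_1\setminus\{0\}$, nonnegative, and vanishes exactly on $\partial B_1$. For $x\in B_1\setminus\{0\}$ one computes
\[
\nabla u(x)=2(|x|-1)\,\frac{x}{|x|},\qquad |\nabla u(x)|=2(1-|x|),\qquad \Delta u(x)=2n-\frac{2(n-1)}{|x|}.
\]
Hence the large-gradient set is $\{|\nabla u|\ge 1\}=\overline{B_{1/2}}\setminus\{0\}$. Moreover, for $\xi\in\partial B_1$ with inner normal $\nu=-\xi$,
\[
\partial_\nu u(\xi)=\lim_{t\to 0^+}\frac{u(\xi-t\xi)-u(\xi)}{t}=\lim_{t\to 0^+} t=0,
\]
which already takes care of the last claim.

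Next I would verify the viscosity supersolution inequality in the Imbert–Silvestre sense at every smooth point. Fix $x_0\in B_1\setminus\{0\}$ and let $\varphi\in C^2$ touch $u$ from below at $x_0$. Since $u$ is smooth at $x_0$, one has $\nabla\varphi(x_0)=\nabla u(x_0)$ and $D^2\varphi(x_0)\le D^2u(x_0)$. If $|\nabla\varphi(x_0)|\ge 1$, then $|\nabla u(x_0)|\ge 1$, i.e.\ $|x_0|\le 1/2$, and therefore
\[
\Delta\varphi(x_0)\le \Delta u(x_0)=2n-\frac{2(n-1)}{|x_0|}\le 2n-4(n-1)=4-2n\le 0,
\]
because $n\ge 2$. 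This yields the required inequality at every $x_0\ne 0$.

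The single delicate point, which I expect to be the main obstacle, is $x=0$, where $u$ has an upward cone-type cusp: $u(x)-u(0)=-2|x|+|x|^2$. I would dispose of it by showing that \emph{no} $C^2$ function can touch $u$ from below at $0$. Indeed, if such $\varphi$ existed, then for every unit vector $e$ and every $t>0$ sufficiently small one would have $\varphi(te)-\varphi(0)\le -2t+t^2$, and Taylor expansion forces $\nabla\varphi(0)\cdot e\le -2+O(t)$, hence $\nabla\varphi(0)\cdot e\le -2$ for all unit vectors $e$. Applying this to both $e$ and $-e$ gives a contradiction, so the viscosity supersolution condition is vacuously satisfied at the origin. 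Combined with the previous step this establishes the Imbert–Silvestre inequality on $B_1$, and since $u\ge 0$ in $B_1$ with $u\equiv 0$ on $\partial B_1$ while $\partial_\nu u\equiv 0$ on $\partial B_1$, the Hopf boundary-point conclusion fails, completing the proof.
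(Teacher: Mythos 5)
Your proposal is correct and follows essentially the same route as the paper's proof: explicit radial computation giving $\Delta u\le 4-2n\le 0$ at smooth contact points with $|\nabla u|\ge 1$ (i.e.\ $|x_0|\le 1/2$), the observation that no $C^2$ function can touch $u$ from below at the origin (your $e$/$-e$ variant is equivalent to the paper's $t>0$/$t<0$ split), and the vanishing inner normal derivative at $\partial B_1$.
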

As a consequence of Theorem \ref{HOL-quantitative}, which provides a quantitative boundary nondegeneracy, we derive Lipschitz regularity for viscosity solutions to one-phase Bernoulli problem governed by the equation \( |\nabla u|^{\alpha}F(D^{2}u)=f \).  This classical free boundary problem originates from the seminal work of Alt and Caffarelli \cite{AC} in the variational setting. For an extensive presentation on the Bernoulli one-phase problem, we refer to \cite{velichkov2023regularity}.  Several extensions have since been established, including non-variational frameworks such as the one considered here. The result below extends, under the assumption of a bounded right-hand side, the fully nonlinear case $\alpha=0$, previously addressed in \cite{MoreiraWangArma2014} and \cite{BragaMoreiraCalcVar2022}.
\begin{theorem}[Lipschitz regularity for a one-phase Bernoulli problem]\label{thm:Lip-FBP}
	Let $\alpha\geq 0$ and let $u\in C(B_{1})$ be a viscosity solution of
	\begin{equation} \label{eq:FBP1}
	\begin{cases}
		|\nabla u|^{\alpha}\,F(D^{2}u)=f & \text{in } B_{1}^{+}(u):=\{u>0\}\cap B_{1},\\[3pt]
		|\nabla u^{+}|\le h & \text{on } \mathcal{F}(u):=\partial\{u>0\}\cap B_{1},
	\end{cases}
	\end{equation}
	where $F$ is uniformly elliptic with constants $(\lambda,\Lambda)$ and $F(0)=0$,
	$f\in C(B_{1})\cap L^{\infty}(B_{1})$, and $h\in C(\mathcal{F}(u))\cap L^{\infty}(\mathcal{F}(u))$.
	Then $u^{+}\in C^{0,1}(B_{1/2})$ and there exists a constant
	$C=C(n,\lambda,\Lambda,\alpha)>0$ such that
	\[
	\|\nabla u^{+}\|_{L^{\infty}(B_{1/2})}
	\;\le\;
	C\Big(
	\|h\|_{L^{\infty}(\mathcal{F}(u))}
	+\|u\|_{L^{\infty}(B_{1}^{+}(u))}
	+\|f\|_{L^{\infty}(B_{1})}^{\frac{1}{1+\alpha}}
	\Big).
	\]
	Moreover, if $0\in \mathcal{F}(u)$, the estimate holds without the term
	$\|u\|_{L^{\infty}(B_{1}^{+}(u))}$.
\end{theorem}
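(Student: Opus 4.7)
The proof is a two-scale argument combining the quantitative Hopf--Oleinik Lemma (Theorem~\ref{HOL-quantitative}) with the viscosity interpretation of the free boundary condition $|\nabla u^+|\le h$, together with the interior gradient estimates of Section~\ref{s:8}. The key intermediate step is a linear growth bound
$u(x)\le C\bigl(\|h\|_{L^\infty(\mathcal{F}(u))}+\|f\|_{L^\infty(B_1)}^{1/(1+\alpha)}\bigr)\dist(x,\mathcal{F}(u))$
near the free boundary, which, combined with interior estimates at the scale $d=\dist(x_0,\mathcal{F}(u))$, yields the pointwise gradient bound.

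Fix $x_0\in B_{1/2}\cap\{u>0\}$ and set $d:=\dist(x_0,\mathcal{F}(u))$. I distinguish two regimes. If $d\ge d_0$ for a universal $d_0\in(0,1/2)$, then $B_{d_0}(x_0)\subset B_1^+(u)\cap B_1$ and the interior gradient estimate of Section~\ref{s:8} directly gives $|\nabla u(x_0)|\le C(\|u\|_{L^\infty(B_1^+(u))}+\|f\|_{L^\infty(B_1)}^{1/(1+\alpha)})$. Otherwise, choose $y_0\in\mathcal{F}(u)$ with $|x_0-y_0|=d$, so that $B_d(x_0)\subset B_1^+(u)$, and rescale $v(y):=u(x_0+dy)/d$ on $B_1$. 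The rescaled operator $\tilde F(M):=d\,F(M/d)$ is still $(\lambda,\Lambda)$-elliptic with $\tilde F(0)=0$, so $v$ solves $|\nabla v|^\alpha\tilde F(D^2 v)=d\,f(x_0+d\cdot)$ in $B_1$; in particular $v\ge 0$, $v(y_0')=0$ at $y_0':=(y_0-x_0)/d\in\partial B_1$, and $v$ obeys the two-sided Pucci inequality with source bounded by $\|f\|_{L^\infty(B_1)}$ (since $d\le 1$).

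Applying the sup version Theorem~\ref{HOL-quantitative}(C) to $v$ on $B_1$ yields
\[
v(y)\ge \beta\,\dist(y,\partial B_1), \qquad \beta:=A_1\sup_{B_{1/2}}v-A_2\|f^+\|_{L^\infty(B_1)}^{1/(1+\alpha)},
\]
which, in the original variables, reads $u(x)\ge\beta(d-|x-x_0|)_+$ in $B_d(x_0)$. Consider the radial test function $\varphi(x):=\beta(d-|x-x_0|)$: it is $C^\infty$ in a neighborhood of $y_0$ (as $y_0\ne x_0$), satisfies $\varphi(y_0)=0$ and $|\nabla\varphi(y_0)|=\beta$, and its positive part $\varphi^+=\beta(d-|x-x_0|)_+$ is everywhere dominated by $u^+$: inside $B_d(x_0)$ this is the Hopf lower bound, while outside $B_d(x_0)$ one has $\varphi^+=0\le u^+$. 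Crucially, the \emph{radial} choice of $\varphi$ respects the curvature of $\partial B_d(x_0)$, so $\varphi^+$ stays below $u^+$ even at points of $\{u\le 0\}$ lying tangentially close to $y_0$---a flat cone aligned with $-\nu:=(x_0-y_0)/d$ would generically fail this. Hence $\varphi^+$ is a legitimate test touching $u^+$ from below at $y_0\in\mathcal{F}(u)$, and the viscosity interpretation of $|\nabla u^+|\le h$ yields $\beta\le h(y_0)\le\|h\|_{L^\infty(\mathcal{F}(u))}$. Rearranging, $\sup_{B_{1/2}}v\le C(\|h\|_\infty+\|f\|_\infty^{1/(1+\alpha)})$, and the interior gradient estimate of Section~\ref{s:8} applied to $v$ gives $|\nabla u(x_0)|=|\nabla v(0)|\le C(\|h\|_\infty+\|f\|_\infty^{1/(1+\alpha)})$.

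Combining both regimes produces the gradient bound on $B_{1/2}\cap\{u>0\}$, and the Lipschitz regularity of $u^+$ on $B_{1/2}$ then follows by the standard argument using the linear growth across $\mathcal{F}(u)$. For the \emph{moreover} statement, if $0\in\mathcal{F}(u)$, then $d\le|x_0|<1/2$ for every $x_0\in B_{1/2}\cap\{u>0\}$; choosing $d_0=1/2$, the first regime never occurs and the $\|u\|_{L^\infty(B_1^+(u))}$ term drops. The main delicate point is the correct viscosity interpretation of $|\nabla u^+|\le h$ on $\mathcal{F}(u)$ and the verification that the radial test $\varphi$ is a valid touching-from-below test at the free boundary despite the potentially irregular local geometry of $\mathcal{F}(u)$ near $y_0$; once this is settled, the argument reduces to the Hopf--Oleinik lemma and the interior gradient estimates already established in the paper.
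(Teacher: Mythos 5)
Your argument is correct, and it reaches the same two--scale structure as the paper (rescale at $d=\operatorname{dist}(x_0,\mathcal F(u))$, extract a slope bound at the nearest free boundary point from a touching test whose positivity set is the tangent ball $B_d(x_0)$, then conclude with the interior estimates of Theorem~\ref{thm:gradient-est}), but the key nondegeneracy step is implemented differently. The paper does \emph{not} invoke Theorem~\ref{HOL-quantitative} here: it re-runs the Harnack inequality (Theorem~\ref{Thm:HarnackBD}) to get $\inf_{B_{1/2}}v\ge M\simeq v(0)/C_H$, splits into cases according to the smallness threshold \eqref{eq:smallRHS-threshold} for the right-hand side, compares $v$ with the explicit barrier $\Gamma_M$ of Proposition~\ref{prop:barrier}, and uses the extended barrier $\widetilde\Gamma_M$ as the test function in Definition~\ref{def:grad-plus}, reading the slope off the barrier's gradient bound $|\nabla\Gamma_M|\ge C_{\mathrm{grad}}M$. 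You instead apply Theorem~\ref{HOL-quantitative}(C) as a black box to the rescaled $v$ and convert the resulting linear growth $u\ge\beta(d-|x-x_0|)$ into a radial cone test with slope exactly $\beta$; since the inhomogeneous correction $-A_2\|f^+\|^{1/(1+\alpha)}$ is already built into $\beta$, you avoid the paper's threshold case distinction entirely (the only caveat, which you should state, is the trivial case $\beta\le 0$, where the bound $\sup_{B_{1/2}}v\le (A_2/A_1)\|f\|^{1/(1+\alpha)}$ is immediate and no test function is needed). Your verification that the cone is admissible for Definition~\ref{def:grad-plus} is the right point to stress: its positivity set is the smooth ball $B_d(x_0)$, so $\mathcal F(\varphi)$ is $C^1$ near $y_0$ regardless of how rough $\mathcal F(u)$ is. Finally, for passing from the gradient bound on $\{u>0\}\cap B_{1/2}$ to $u^+\in C^{0,1}(B_{1/2})$, the paper uses the gluing result of Section~\ref{s:glue-SVA} (Remark~\ref{regularity-positive-part-balls}), whereas you appeal to the standard chaining argument across the zero set; by convexity of $B_{1/2}$ this chaining does close, so this is an acceptable, more elementary substitute, though it is the one step you leave entirely to ``standard'' folklore while the paper supplies a full proof.
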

Finally, we apply these ideas to a one-phase flame-propagation model arising in combustion theory, deriving \(\varepsilon\)-uniform Lipschitz bounds. Such a model naturally arises in the description of laminar flames as an asymptotic limit for large activation energy with source terms, corresponding to the singular limit case as $\varepsilon \to 0.$
Related results were previously obtained by Araújo, Ricarte, and Teixeira in \cite{ART2017AnnalesPoinc} for Perron-type solutions of a flame-propagation model associated with the operator under consideration. In contrast, we show that Lipschitz regularity holds for every viscosity solution of the one-phase problem. In the case $\alpha=0$, a more general result was established in \cite{MoreiraWangCalcVar}. 

\begin{theorem}[Lipschitz regularity for the flame propagation problem]\label{thm:flame-deg}
	Let $\alpha\geq 0$ and let $u_\varepsilon\in C(B_{1})$ be a bounded nonnegative viscosity solution of
	\begin{equation} \label{eq:flame_prop}
	|\nabla u_\varepsilon|^{\alpha}\,F(D^{2}u_\varepsilon)
	=\beta_\varepsilon(u_\varepsilon)+f
	\quad\text{in } B_{1},
	\end{equation}
	where $F$ is uniformly elliptic with constants $\lambda,\Lambda$, $F(0)=0$,
	$f\in C(B_{1})\cap L^{\infty}(B_{1})$,
	and $\beta_\varepsilon(t):=\varepsilon^{-1}\beta(t/\varepsilon)$ with
	$\beta\in C(\R)$ bounded and supported in $[0,1]$. Then $u_\varepsilon\in C^{1,\beta_0}(B_{1})$ locally for some
	$\beta_0=\beta_0(n,\lambda,\Lambda,\alpha)\in(0,1)$.
	Moreover, for every $\varepsilon\in(0,\tfrac18]$ there exists a constant
	$C=C(n,\lambda,\Lambda,\alpha)>0$, independent of $\varepsilon$ and of $\beta$, such that
	\begin{equation} \label{eq:flameprop1}
	\|\nabla u_\varepsilon\|_{L^{\infty}(B_{1/2})}
	\;\le\;
	C\Big(
	1
	+\|\beta\|_{L^{\infty}(\R)}^{\frac{1}{1+\alpha}}
	+\|u_\varepsilon\|_{L^{\infty}(B_{1})}
	+\|f\|_{L^{\infty}(B_{1})}^{\frac{1}{1+\alpha}}
	\Big).
	\end{equation}
	In particular, if the sequence $(u_\varepsilon)_{\varepsilon}$ is uniformly bounded in $L^{\infty}(B_{1})$, then it is Lipschitz in $B_{1/2}$ uniformly in $\varepsilon$.
		\noindent Moreover, if $\varepsilon\in(0,\tfrac18]$ and $u_\varepsilon(0)\le\varepsilon$,
then it holds that
	\[
	\|\nabla u_\varepsilon\|_{L^{\infty}(B_{1/4})}
	\;\le\;
	C\Big(
	1
	+\|\beta\|_{L^{\infty}(\R)}^{\frac{1}{1+\alpha}}
	+\|f\|_{L^{\infty}(B_{1})}^{\frac{1}{1+\alpha}}
	\Big),
	\]
	that is, the term $\|u_\varepsilon\|_{L^{\infty}(B_{1})}$ can be dropped from the estimate \eqref{eq:flameprop1}.
\end{theorem}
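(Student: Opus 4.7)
The plan is to reduce Theorem~\ref{thm:flame-deg} to the one--phase Lipschitz result of Theorem~\ref{thm:Lip-FBP} applied to the auxiliary function $w_\varepsilon:=(u_\varepsilon-\varepsilon)^+$, after first establishing local $C^{1,\beta_0}$ regularity. For each fixed $\varepsilon>0$ the datum $\beta_\varepsilon(u_\varepsilon)+f$ lies in $L^\infty(B_1)$, so the interior $C^{1,\beta_0}$ theory for viscosity solutions of $|\nabla u|^\alpha F(D^2u)\in L^\infty$ (in the Birindelli--Demengel and Imbert--Silvestre frameworks) yields $u_\varepsilon\in C^{1,\beta_0}_{\mathrm{loc}}(B_1)$. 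This renders $\nabla u_\varepsilon$ meaningful pointwise, in particular along the level set $\{u_\varepsilon=\varepsilon\}$, which will play the role of a free boundary for $w_\varepsilon$.

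Because $\beta_\varepsilon\equiv 0$ on $(\varepsilon,+\infty)$, the function $w_\varepsilon$ is a continuous nonnegative solution of $|\nabla w_\varepsilon|^\alpha F(D^2 w_\varepsilon)=f$ in $\{w_\varepsilon>0\}$, with $w_\varepsilon\equiv 0$ on $\mathcal F(w_\varepsilon)=\partial\{u_\varepsilon>\varepsilon\}\cap B_1$. The heart of the argument is to produce the universal free-boundary gradient bound
\[
|\nabla w_\varepsilon^+(y)|
\;\le\;
C_0\bigl(\|\beta\|_{\infty}^{1/(1+\alpha)}+\|f\|_{\infty}^{1/(1+\alpha)}\bigr),
\qquad y\in\mathcal F(w_\varepsilon),
\]
with $C_0=C_0(n,\lambda,\Lambda,\alpha)$. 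Granted this, Theorem~\ref{thm:Lip-FBP} applied to $w_\varepsilon$ with $h$ equal to this universal constant delivers $\|\nabla w_\varepsilon\|_{L^\infty(B_{1/2})}\le C(1+\|\beta\|_\infty^{1/(1+\alpha)}+\|u_\varepsilon\|_\infty+\|f\|_\infty^{1/(1+\alpha)})$. Since $\nabla u_\varepsilon=\nabla w_\varepsilon$ on $\{u_\varepsilon>\varepsilon\}$ and is continuous across $\mathcal F(w_\varepsilon)$, while on the complementary set $\{u_\varepsilon\le\varepsilon\}$ the rescaling $v(\eta):=\varepsilon^{-1}u_\varepsilon(x_0+\varepsilon\eta)$ satisfies the tame equation $|\nabla v|^\alpha F(D^2v)=\beta(v)+\varepsilon f$ with $\|v\|_{L^\infty(B_1)}\le 1$ on the portion where $u_\varepsilon$ stays below $\varepsilon$, a second application of interior $C^{1,\beta_0}$ to $v$ produces a matching bound on this region, and the two estimates glue to \eqref{eq:flameprop1}.

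To establish the free-boundary gradient bound I would fix $y\in\mathcal F(w_\varepsilon)$ and analyze the same rescaling $v(\eta)=\varepsilon^{-1}u_\varepsilon(y+\varepsilon\eta)$, now satisfying $v(0)=1$, $v\ge 0$, and the equation above with right-hand side bounded by $\|\beta\|_\infty+\|f\|_\infty$. Applying the quantitative Hopf--Oleinik lemma (Theorem~\ref{HOL-quantitative}) to $v$ on an interior tangent ball to $\{v<1\}$ at $y$ and combining it with a radial upper barrier tuned to the transition layer $\{0\le v\le 1\}$ and the two-sided Pucci framework of part (C) converts the normal-derivative control into the desired upper bound on $|\nabla v(0)|=|\nabla u_\varepsilon(y)|$. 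The last clause of the theorem, namely the improved estimate on $B_{1/4}$ when $u_\varepsilon(0)\le\varepsilon$, follows from the Harnack inequality of Section~\ref{s:5}: under the assumption $u_\varepsilon(0)\le\varepsilon\le 1/8$ the level set $\{u_\varepsilon=\varepsilon\}$ is forced to sit near the origin, and an argument analogous to the case $0\in\mathcal F(u)$ in Theorem~\ref{thm:Lip-FBP} permits absorbing $\|u_\varepsilon\|_\infty$ into the universal constants.

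The hard part is the free-boundary gradient bound itself. A direct use of interior $C^{1,\beta_0}$ on $v$ produces an estimate involving $\|v\|_{L^\infty(B_1)}$, which via the Lipschitz control grows like the global Lipschitz constant of $u_\varepsilon$ one is trying to establish, leading to a circular inequality. Breaking this circularity is precisely where the quantitative—not merely qualitative—Hopf--Oleinik lemma enters, since \eqref{eq:HOL-2} promotes interior mass $\|u\|_{L^\varepsilon(B_{1/2})}$ into effective two-sided control of the normal derivative and, via the barrier of Section~\ref{s:4}, into the upper slope at the free boundary.
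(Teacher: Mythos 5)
Your overall architecture (a universal slope bound on the $\varepsilon$-level set, reduction of $(u_\varepsilon-\varepsilon)^+$ to the one-phase Theorem~\ref{thm:Lip-FBP}, plus a separate bound on $\{u_\varepsilon\le\varepsilon\}$) differs from the paper, which never invokes Theorem~\ref{thm:Lip-FBP} but runs a direct dichotomy (transition layer / points above the $\varepsilon$-level / interior regime) using Theorem~\ref{thm:gradient-est}; that reduction could in principle be made to work. The genuine gap sits exactly at your self-declared ``hard part''. The proposed mechanism for the free-boundary gradient bound --- the quantitative Hopf--Oleinik lemma applied ``on an interior tangent ball to $\{v<1\}$ at $y$'' together with a radial upper barrier and part (C) --- does not work as stated: the level set $\{u_\varepsilon=\varepsilon\}$ has no a priori regularity, so no interior tangent ball is available, and in any case Theorem~\ref{HOL-quantitative} controls the wrong direction (it bounds interior mass or sup \emph{by} the normal derivative and gives \emph{lower} bounds on boundary growth; it does not produce an upper bound for $|\nabla v(0)|$ at a point where $v(0)=1$). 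Moreover the circularity you invoke to justify this detour is illusory: the right tool is not the plain estimate \eqref{eq:C1gamma} but the Harnack-based pointwise bound \eqref{eq:grad0} of Theorem~\ref{thm:gradient-est}, which, applied to the nonnegative rescaling $v(\eta)=\varepsilon^{-1}u_\varepsilon(y+\varepsilon\eta)$ solving $|\nabla v|^{\alpha}\widetilde F(D^2v)=\beta(v)+\varepsilon f(y+\varepsilon\cdot)$, gives
\[
|\nabla u_\varepsilon(y)|=|\nabla v(0)|\ \le\ C\Big(v(0)+\big(\|\beta\|_{L^\infty(\R)}+\|f\|_{L^\infty(B_1)}\big)^{\frac{1}{1+\alpha}}\Big),
\qquad v(0)\le 1,
\]
with no reference to $\|v\|_{L^\infty(B_1)}$. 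This is precisely the paper's Case A, and it replaces your Hopf-based step.

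The same issue reappears on the region $\{u_\varepsilon\le\varepsilon\}$: you assert $\|v\|_{L^\infty(B_1)}\le1$ ``on the portion where $u_\varepsilon$ stays below $\varepsilon$'', but the interior $C^{1,\beta_0}$ estimate requires the sup of $v$ over the whole rescaled ball, and $u_\varepsilon$ may well exceed $\varepsilon$ inside $B_\varepsilon(x_0)$; again only the pointwise estimate \eqref{eq:grad0} with $v(0)\le1$ closes this step. Finally, the last clause (dropping $\|u_\varepsilon\|_{L^\infty(B_1)}$ on $B_{1/4}$ when $u_\varepsilon(0)\le\varepsilon$) needs more than ``the level set sits near the origin'': since $0$ need not belong to $\partial\{u_\varepsilon>\varepsilon\}$, the ``$0\in\mathcal F(u)$'' refinement of Theorem~\ref{thm:Lip-FBP} is not directly applicable; the paper instead uses $u_\varepsilon(0)\le\varepsilon$ only to guarantee $d_0=\operatorname{dist}(x_0,\{u_\varepsilon\le\varepsilon\}\cap\overline{B_{1/2}})\le|x_0|$, rescales $w=u_\varepsilon-\varepsilon$ at scale $d_0$, and combines the boundary estimate \eqref{eq:grad0-bdry} with the Case A bound at the contact point $y_0$. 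With these repairs your scheme essentially collapses onto the paper's proof.
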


\section{Notation and preliminaries}\label{sec:notation}

Let the space dimension be denoted by \(n\ge 2\). We write \(\mathcal S^n\) for the set of real symmetric \(n\times n\) matrices, endowed with the operator norm \(|\cdot|\).
For \(X\in\mathcal S^n\), let \(\{\mu_i\}_{i=1,\dots,n}\) be its eigenvalues and decompose
\[
X^{+}:=\sum_{\mu_i>0}\mu_i\,e_i\otimes e_i,
\qquad
X^{-}:=-\sum_{\mu_i<0}\mu_i\,e_i\otimes e_i,
\qquad
X=X^{+}-X^{-},
\]
so that \(X^{\pm}\in\mathcal S^n\) are nonnegative and \(\tr(X^{\pm})=\sum_{\mu_i\gtrless 0}|\mu_i|\).
We use the partial order \(X\ge Y\) to mean \(X-Y\) is nonnegative definite.
By $x$ we denote vectors in \(\mathbb R^n\) and \(|x|\) their Euclidean norm; gradients and Hessians are
\(\nabla u\) and \(D^2u\), respectively. We denote with \(B_r(x_0)\) the ball centered at $x_0 \in \R^n$ and radius $r\geq 0$ and we define \(B_r:=B_r(0)\).

\textbf{Pucci extremal operators.}
Fix ellipticity constants \(0<\lambda\le \Lambda\).
For \(X\in\mathcal S^n\), the Pucci extremal operators are
\begin{equation}\label{eq:Pucci}
\mathcal M^+_{\lambda,\Lambda}(X):=\Lambda\,\tr(X^{+})-\lambda\,\tr(X^{-}),
\qquad
\mathcal M^-_{\lambda,\Lambda}(X):=\lambda\,\tr(X^{+})-\Lambda\,\tr(X^{-}).
\end{equation}

Throughout the work, we will always consider $(\lambda,\Lambda)-$elliptic operators \(F:\mathcal S^n\to\mathbb R\) 
according to the following definition of uniform ellipticity. Without loss of generality, we will always assume that $F(0)=0$. 

\begin{assump}[Uniform ellipticity]\label{assump:Pucci-ellipticity}
A continuous operator \(F:\mathcal S^n\to\mathbb R\) is \((\lambda,\Lambda)\)-elliptic if for all \(X,Y\in\mathcal S^n\),
\begin{equation}\label{eq:Pucci-ellipticity}
\mathcal M^-_{\lambda,\Lambda}(X-Y)\ \le\ F(X)-F(Y)\ \le\ \mathcal M^+_{\lambda,\Lambda}(X-Y).
\end{equation}
\end{assump}

\begin{remark}\label{rem:older-ellipticity}
Uniform ellipticity is often expressed in an alternative, yet equivalent, form. 
Indeed, taking \(Y=X+N\) with \(N\ge 0\) in \eqref{eq:Pucci-ellipticity} yields
\[
\mathcal M^-_{\lambda,\Lambda}(N)\ \le\ F(X+N)-F(X)\ \le\ \mathcal M^+_{\lambda,\Lambda}(N).
\]
Since \(N\ge 0\) implies \(\mathcal M^+_{\lambda,\Lambda}(N)=\Lambda\,\tr(N)\) and
\(\mathcal M^-_{\lambda,\Lambda}(N)=\lambda\,\tr(N)\), we recover the estimate
\[
\lambda\,\tr(N)\ \le\ F(X+N)-F(X)\ \le\ \Lambda\,\tr(N),
\]
which matches the condition used for instance in \cite[Definition 2.1]{Caffarelli-Cabre1995},  up to identifying \(|N|\) with \(\tr(N)\) for \(N\ge 0\).
\end{remark}

\textbf{Degenerate fully nonlinear operators.}
Given $\alpha \geq 0$ and \(F\) as in Assumption~\ref{assump:Pucci-ellipticity}, define
\begin{equation}\label{eq:Lalpha-operators}
\mathcal L_{\alpha}(u):=|\nabla u|^{\alpha}\,F(D^2u),
\qquad
\mathcal L_{\alpha}^{\pm}(u):=|\nabla u|^{\alpha}\,\mathcal M^{\pm}_{\lambda,\Lambda}(D^2u).
\end{equation}
We will often compare \(\mathcal L_{\alpha}\) with its Pucci extremal counterparts \(\mathcal L_{\alpha}^{\pm}\).

Let \(\Omega\subset\mathbb R^n\) be open and \(f\in C(\Omega)\cap L^\infty(\Omega)\).
We adopt the notion of viscosity solution for 
\begin{equation}\label{eq:main}
|\nabla u|^{\alpha}\,F(D^2u)=f\quad \text{in }\Omega,
\end{equation}
introduced by Birindelli-Demengel in \cite[Definition 2.7]{Birindelli-Demengel2004} 
which tests only with touching functions whose gradient does not vanish at the contact point.

\begin{definition}\label{def:BD}
Let $f\in C(\Omega)$ and let $F:\mathcal S^n\to\mathbb R$ be $(\lambda,\Lambda)$-elliptic \Assump{assump:Pucci-ellipticity}, with $\alpha\ge0 $.
A continuous function $u$ on $\Omega$ is called a viscosity \emph{supersolution} (respectively \emph{subsolution}) of
\[
|\nabla u|^{\alpha}F(D^{2}u)=f(x)\quad\text{in }\Omega
\]
if for every $x_{0}\in\Omega$ one of the following conditions holds:
\begin{itemize}
    \item[\textit{(i)}] There exist $\delta>0$ and a constant $c\in\mathbb R$ such that
$u\equiv c$ in the open ball $B_{\delta}(x_{0})\subset\Omega$ and
\[
0\ \le\ f(x)\quad \text{for all }x\in B_{\delta}(x_{0})
\qquad
\big(\text{respectively } 0\ \ge\ f(x)\text{ for all }x\in B_{\delta}(x_{0})\big).
\]
    \item[\textit{(ii)}] For every $\varphi\in C^{2}(\Omega)$ such that
$u-\varphi$ attains a local minimum (respectively local maximum) at $x_{0}$ and $\nabla\varphi(x_{0})\neq 0$, one has
\[
|\nabla\varphi(x_{0})|^{\alpha}\,F\!\big(D^{2}\varphi(x_{0})\big)
\ \le\ f(x_{0})
\qquad
\big(\text{respectively } |\nabla\varphi(x_{0})|^{\alpha}\,F\!\big(D^{2}\varphi(x_{0})\big)
 \ \ge\ f(x_{0})\big).
\]
\end{itemize}
A viscosity \emph{solution} is a function that is both a subsolution and a supersolution.

\end{definition}

\begin{remark}\label{rem:BD-classical}
Since the operator $ \mathcal L_{\alpha}$ is continuous in the gradient variable and we are assuming $F(0)=0$,
Definition \ref{def:BD} is equivalent to the classical
definition of viscosity solutions of Crandall-Ishii-Lions
\cite{Crandall-Ishii-Lions1992},
as explicitly observed in \cite[Remark~2.2]{Birindelli-Demengel2010}.
\end{remark}

\begin{remark}[Rescaling while preserving ellipticity]\label{rem:rescaling}
	Let $u\in C(B_1)$ be a viscosity subsolution of
	\[
	|\nabla u|^{\alpha}F(D^2u)= f \qquad\text{in }B_1,
	\]
	where $F:\mathcal S^n\to\R$ is $(\lambda,\Lambda)$–elliptic and $f\in C(B_1)$.
	For $a,b>0$ define
	\[
	v(x):=a\,u(bx),\qquad x\in B_{1/b}.
	\]
	Then $v$ satisfies
	\[
	|\nabla v|^{\alpha}\,\widetilde F(D^2v)\le \widetilde f(x)\qquad\text{in }B_{1/b},
	\]
	with
	\[
	\widetilde F(M)=a b^{2} F\!\big(\tfrac{1}{a b^{2}}M\big),\qquad
	\widetilde f(x)=a^{\alpha+1} b^{\alpha+2} f(bx),
	\]
	and $\widetilde F$ is again $(\lambda,\Lambda)$–elliptic.
	
	In the special case $a=1/b$, one obtains the simpler rescaled equation
	\[
	|\nabla v|^{\alpha}\,\widetilde F(D^2v)\le \widetilde f(x)\qquad\text{in }B_{1/b},
	\]
	where now
	\[
	\widetilde F(M)=b F\!\big(\tfrac{1}{b}M\big),\qquad
	\widetilde f(x)=b f(bx).
	\]
\end{remark}

\textbf{Large gradient regime operators.}
In \cite{Imbert-Silvestre2016}, it is considered a notion of viscosity solutions for operators acting only when the gradient is large. This notion, along with the results obtained for these operators, is crucial in our analysis. We recall hereafter the definition and discuss its relation with Definition \ref{def:BD} in Lemma \ref{lem:BD-to-Pucci-LG}.

\begin{definition}\label{TERESA}
Let $\Omega\subset\mathbb R^{n}$ be an open set and $\gamma \ge0$. Let $f\in C(\Omega)$ and
$F:\mathbb R^{n}\times\mathcal S^{n}\to\mathbb R$ be continuous and monotone in $X$, that is $X\le Y $ implies $  F(p,X)\le F(p,Y)$ for all $p\in\mathbb R^n$ and $X,Y\in\mathcal S^n$.
We say that $u \in C(\Omega)$ is a viscosity \emph{supersolution} (respectively \emph{subsolution}) of 
\[
F(\nabla u,D^2 u) = f \qquad \text{in } \{x\in\Omega:\ |\nabla u(x)|\ge \gamma\}
\]
if, for every $x_0\in\Omega$ and every
$\varphi\in C^2(\Omega)$ such that $u-\varphi$ has a local minimum (respectively local maximum) at $x_0$ and
$|\nabla\varphi(x_0)|\ge \gamma$, then
\[
F\!\big(\nabla\varphi(x_0),D^2\varphi(x_0)\big)\ \ge\ f(x_0) \qquad
\big(\text{respectively } F\!\big(\nabla\varphi(x_0),D^2\varphi(x_0)\big)\ \le\ f(x_0) \big).
\]

\noindent We write $F(\nabla u,D^2u)=f$ in $\{x\in\Omega:\ |\nabla u(x)|\ge \gamma\}$ when both conditions above hold.
No requirement is imposed at contact points where $|\nabla\varphi(x_0)|<\gamma$. 
To ease the notation, whenever it is clear from the context, we call $\{x\in\Omega:\ |\nabla u(x)|\ge \gamma\}$ simply as $\{ |\nabla u|\ge \gamma\}$.
\end{definition}

We shall also introduce the related extremal operators.

\begin{definition}\label{def:IS-extremal}
\noindent Fix \(0<\lambda\le\Lambda\), a threshold \(\gamma>0\), and a parameter \(\beta\ge0\).
For \((p,X)\in\mathbb R^{n}\times\mathcal S^{n}\), set
\begin{equation}\label{eq:Pplus-gammabeta}
\mathcal P^{+}_{\gamma,\beta}(p,X):=
\begin{cases}
\Lambda\,\tr(X^{+})-\lambda\,\tr(X^{-})+\beta\,|p|, & \text{if } |p|\ge\gamma,\\[2pt]
+\infty, & \text{if } |p|<\gamma.
\end{cases}
\end{equation}
\begin{equation}\label{eq:Pminus-gammabeta}
\mathcal P^{-}_{\gamma,\beta}(p,X):=
\begin{cases}
\lambda\,\tr(X^{+})-\Lambda\,\tr(X^{-})-\beta\,|p|, & \text{if } |p|\ge\gamma,\\[2pt]
-\infty, & \text{if } |p|<\gamma.
\end{cases}
\end{equation}

\noindent 

\end{definition}
\begin{remark}\label{rem:LG-IS-equivalence}
When \(|p|\ge\gamma\) these extremal operators agree with the classical Pucci operators \textup{(}plus the first–order term \(\pm\,\beta|p|\)\textup{)}. Namely,
$\mathcal P^{\pm}_{\gamma,\beta}(p,X)=\mathcal M^{\pm}_{\lambda,\Lambda}(X)\pm\beta|p|$.
Outside the large–gradient region, for \(|p|<\gamma\), they provide no information. In our applications, we take \(\beta=0\).

 In particular, from Definition \ref{TERESA}, for every $u\in C(\Omega)$ and $f\in C(\Omega)$, one has that 
\[
\mathcal M^{-}_{\lambda,\Lambda}(D^{2}u)\le f \ \text{ in } \ \{ |\nabla u|\ge \gamma\}
\ \Longleftrightarrow \
u \text{ is a viscosity subsolution of } \mathcal P^{-}_{\gamma,0}(\nabla u,D^{2}u)= f,
\]
\[
\mathcal M^{+}_{\lambda,\Lambda}(D^{2}u)\ge -\,f \ \text{ in }\{|\nabla u|\ge \gamma\} \Longleftrightarrow \
u \text{ is a viscosity supersolution of } \mathcal P^{+}_{\gamma,0}(\nabla u,D^{2}u)= -\,f.
\]

\end{remark}

We now relate the notion of viscosity solution to \eqref{eq:main} given by Definition \ref{def:BD} with the one of Definition \ref{TERESA}.

\begin{lemma}\label{lem:BD-to-Pucci-LG}
	Let $\alpha\ge0$, $f\in C(\Omega)\cap L^{\infty}(\Omega)$, and let
	$F:\mathcal S^{n}\to\mathbb R$ be $(\lambda,\Lambda)$–elliptic with $F(0)=0$.
	Assume $u\in C(\Omega)$ is a viscosity supersolution (respectively subsolution) of
\eqref{eq:main} in the sense of Definition \ref{def:BD}.
	Then, for every threshold $\gamma>0$, the following inequality holds in the sense of Definition~\ref{TERESA} on the large–gradient set $\{|\nabla u|\ge\gamma\}$
    \[
    \displaystyle \mathcal M^-_{\lambda,\Lambda}(D^{2}u)\ \le\ \gamma^{-\alpha}\,\|f\|_{L^{\infty}(\Omega)}, \quad \big(\text{respectively }\mathcal M^+_{\lambda,\Lambda}(D^{2}u)\ \ge\ -\,\gamma^{-\alpha}\,\|f\|_{L^{\infty}(\Omega)}\big).
    \]
    
	In particular, if $u$ is a viscosity solution in the sense of Definition \ref{def:BD}, both inequalities hold simultaneously on $\{|\nabla u|\ge\gamma\}$.
\end{lemma}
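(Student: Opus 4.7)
The plan is to unwind both definitions and transfer the BD test–function inequality into the Imbert–Silvestre one by a short sign/normalization step. The key observation is that on the large-gradient set any admissible IS test function automatically has non-vanishing gradient, so the BD test is applicable.

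I would fix $x_0\in\Omega$ and a test function $\varphi\in C^2(\Omega)$ admissible for the IS inequality at $x_0$, that is, realizing the appropriate contact with $u$ and with $|\nabla\varphi(x_0)|\ge\gamma$. Since $\gamma>0$, this forces $\nabla\varphi(x_0)\neq 0$, which immediately rules out alternative (i) of Definition~\ref{def:BD}: if $u\equiv c$ on some $B_\delta(x_0)$, the contact condition forces $\varphi$ itself to attain a local extremum at $x_0$, contradicting $\nabla\varphi(x_0)\neq 0$. Hence only alternative (ii) is in play, yielding
\[
|\nabla\varphi(x_0)|^{\alpha}\,F\!\big(D^{2}\varphi(x_0)\big)\ \le\ f(x_0).
\]
Combined with $F(0)=0$ and the lower Pucci bound $F(X)\ge\mathcal M^-_{\lambda,\Lambda}(X)$ from Assumption~\ref{assump:Pucci-ellipticity}, I obtain
\[
|\nabla\varphi(x_0)|^{\alpha}\,\mathcal M^-_{\lambda,\Lambda}\!\big(D^{2}\varphi(x_0)\big)\ \le\ f(x_0).
\]

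The remaining step is a brief sign analysis on $\mathcal M^-_{\lambda,\Lambda}(D^{2}\varphi(x_0))$. If this quantity is $\le 0$, then the target bound is immediate since $\gamma^{-\alpha}\|f\|_{L^{\infty}(\Omega)}\ge 0$. Otherwise the previous display forces $f(x_0)>0$, and dividing by $|\nabla\varphi(x_0)|^{\alpha}\ge\gamma^{\alpha}$ gives
\[
\mathcal M^-_{\lambda,\Lambda}\!\big(D^{2}\varphi(x_0)\big)\ \le\ |\nabla\varphi(x_0)|^{-\alpha} f(x_0)\ \le\ \gamma^{-\alpha}\|f\|_{L^{\infty}(\Omega)}.
\]
The subsolution case is completely symmetric, using the upper Pucci bound $F(X)\le\mathcal M^+_{\lambda,\Lambda}(X)$ together with the reversed inequality from Definition~\ref{def:BD}(ii), and the conclusion for solutions follows by combining both.

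The only delicate point is this sign case split when $\alpha>0$: one cannot absorb $|\nabla\varphi|^{-\alpha}$ into $\gamma^{-\alpha}$ against a quantity of unknown sign, so reducing to $\mathcal M^-_{\lambda,\Lambda}(D^{2}\varphi(x_0))>0$ before dividing is essential. Beyond this short computation, the lemma is a direct translation between the two notions, enabled by the fact that $\gamma>0$ makes the BD and IS test classes compatible on the large-gradient set.
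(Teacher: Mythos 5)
Your proof is correct and takes essentially the same route as the paper's: rule out alternative (i) of Definition~\ref{def:BD} (no admissible test function with $|\nabla\varphi(x_0)|\ge\gamma>0$ can touch a locally constant $u$), apply the Birindelli--Demengel test inequality, use the Pucci bracket $\mathcal M^-_{\lambda,\Lambda}\le F\le\mathcal M^+_{\lambda,\Lambda}$ coming from ellipticity and $F(0)=0$, and divide by $|\nabla\varphi(x_0)|^{\alpha}\ge\gamma^{\alpha}$. The only cosmetic difference is that the paper replaces $f(x_0)$ by the nonnegative quantity $\|f\|_{L^{\infty}(\Omega)}$ \emph{before} dividing, which renders your sign case split on $\mathcal M^-_{\lambda,\Lambda}(D^2\varphi(x_0))$ unnecessary, though it is not incorrect.
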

\begin{proof}
\noindent Fix $\gamma>0$. By $(\lambda,\Lambda)$–ellipticity and $F(0)=0$,
\begin{equation}\label{eq:Pucci-bracket}
\mathcal M^-_{\lambda,\Lambda}(X)\ \le\ F(X)\ \le\ \mathcal M^+_{\lambda,\Lambda}(X)
\qquad \text{for all } \,X\in\mathcal S^n.
\end{equation}
We first consider the case when $u$ is a viscosity supersolution of \eqref{eq:main}.
Let $x_{0}\in\Omega$, if $u$ satisfies condition \textit{(i)} of Definition \ref{def:BD} then $\{|\nabla u| \geq \gamma \} \cap B_{\delta}(x_0) = \emptyset$. On the other hand, if 
$\varphi\in C^{2}(\Omega)$ is such that $u-\varphi$ has a local minimum at $x_{0}$
and $|\nabla\varphi(x_{0})|\ge\gamma$. 
By condition \textit{(ii)} of Definition~\ref{def:BD}, one has
\[
|\nabla\varphi(x_{0})|^{\alpha}\,F\!\big(D^{2}\varphi(x_{0})\big)\ \le\ f(x_{0})\ \le\ \|f\|_{L^{\infty}(\Omega)}.
\]
Hence
\[
F\!\big(D^{2}\varphi(x_{0})\big)\ \le\ |\nabla\varphi(x_{0})|^{-\alpha}\,\|f\|_{L^{\infty}(\Omega)}
\ \le\ \gamma^{-\alpha}\,\|f\|_{L^{\infty}(\Omega)}.
\]
Using \eqref{eq:Pucci-bracket},
\[
\mathcal M^-_{\lambda,\Lambda}\!\big(D^{2}\varphi(x_{0})\big)\ \le\ \gamma^{-\alpha}\,\|f\|_{L^{\infty}(\Omega)}.
\]

\noindent This is precisely the statement that
\[
\mathcal M^-_{\lambda,\Lambda}(D^{2}u)\ \le\ \gamma^{-\alpha}\,\|f\|_{L^{\infty}(\Omega)}
\quad \text{in } \{x\in\Omega:\ |\nabla u(x)|\ge \gamma\}.
\]

 A similar reasoning applies whenever $u$ is a viscosity subsolution of \eqref{eq:main}.
Indeed, let $x_{0}\in\Omega$ and let $\varphi\in C^{2}(\Omega)$ be such that $u-\varphi$ has a local maximum at $x_{0}$
and $|\nabla\varphi(x_{0})|\ge\gamma$.
By Definition~\ref{def:BD},
\[
|\nabla\varphi(x_{0})|^{\alpha}\,F\!\big(D^{2}\varphi(x_{0})\big)\ \ge\ f(x_{0})\ \ge\ -\,\|f\|_{L^{\infty}(\Omega)}.
\]
Thus
\[
F\!\big(D^{2}\varphi(x_{0})\big)\ \ge\ -\,|\nabla\varphi(x_{0})|^{-\alpha}\,\|f\|_{L^{\infty}(\Omega)}
\ \ge\ -\,\gamma^{-\alpha}\,\|f\|_{L^{\infty}(\Omega)}.
\]
By \eqref{eq:Pucci-bracket},
\[
\mathcal M^+_{\lambda,\Lambda}\!\big(D^{2}\varphi(x_{0})\big)\ \ge\ -\,\gamma^{-\alpha}\,\|f\|_{L^{\infty}(\Omega)},\]
which gives the desired result.

Combining the two cases yields the two large–gradient Pucci inequalities in the statement.
\end{proof}

\section{Construction of a barrier with geometry} \label{s:4}
In this section, we provide an explicit useful barrier with geometry that plays a key role in the proof of Theorem \ref{HOL-quantitative}.

\begin{proposition}[Existence, smoothness and geometry of barriers]\label{prop:barrier}
	For every \(M\geq 0\) and \(R>0\) there exists
	\[
	\Gamma \in C^\infty(\overline{\Aaa_R}), 
	\qquad \Aaa_R := B_R \setminus \overline{B_{R/2}},
	\]
	such that
	\[
	\begin{cases}
	|\nabla \Gamma|^ {\alpha}{\cM}^-_{\lambda,\Lambda}(D^2\Gamma) 
		\;\ge\; c_0\,\dfrac{M^{1+\alpha}}{R^{2+\alpha}} & \text{in } \Aaa_R,\\[6pt]
		\Gamma = 0 & \text{on } \partial B_R,\\
		\Gamma = M & \text{on } \partial B_{R/2},
	\end{cases}
	\]
	and, moreover, for every \(x\in \Aaa_R\),
	\[
	A_1\,\dfrac{M}{R}\,\mathrm{dist}(x,\partial B_R)
	\;\le\; \Gamma(x)
	\;\le\; A_2\,\dfrac{M}{R}\,\mathrm{dist}(x,\partial B_R),
	\qquad
	A_3\,\dfrac{M}{R}
	\;\le\; |\nabla \Gamma(x)|
	\;\le\; A_4\,\dfrac{M}{R}.
	\]
	
	Here the constants \(c_0,A_1,A_2,A_3,A_4>0\) depend only on \(n,\alpha,\lambda,\Lambda\).
\end{proposition}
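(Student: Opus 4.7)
I will build the barrier as a radial function $\Gamma(x)=\phi(|x|)$ of the form
\[
\phi(r)=c_1\bigl(r^{-\gamma}-R^{-\gamma}\bigr),\qquad r\in[R/2,R],
\]
where $\gamma=\gamma(n,\lambda,\Lambda)>0$ and $c_1=c_1(M,R,\gamma)>0$ will be chosen to meet both the boundary conditions and the structural inequality. The function is automatically smooth on $\overline{\mathcal A_R}$; it vanishes on $\partial B_R$ by construction, and fixing $c_1$ from $\phi(R/2)=M$ yields
\[
c_1=\frac{M R^{\gamma}}{2^{\gamma}-1}.
\]

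\textbf{Pucci computation.} For a radial function the Hessian has eigenvalues $\phi''(r)$ (radial direction, once) and $\phi'(r)/r$ (tangential, $(n-1)$ times). Since $\phi'(r)=-\gamma c_1 r^{-\gamma-1}<0$ and $\phi''(r)=\gamma(\gamma+1)c_1 r^{-\gamma-2}>0$, the positive eigenvalue is $\phi''(r)$ and the negative ones are $\phi'(r)/r$, so
\[
\mathcal M^-_{\lambda,\Lambda}(D^2\Gamma)
=\lambda\,\phi''(r)+\Lambda(n-1)\,\frac{\phi'(r)}{r}
= c_1\gamma\,r^{-\gamma-2}\bigl[\lambda(\gamma+1)-\Lambda(n-1)\bigr].
\]
I will \emph{choose $\gamma$ large enough} (for instance $\gamma:=\Lambda(n-1)/\lambda$) so that the bracket is a strictly positive constant $\kappa=\kappa(n,\lambda,\Lambda)>0$. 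Since $|\nabla\Gamma(x)|=|\phi'(r)|=\gamma c_1 r^{-\gamma-1}$, this gives
\[
|\nabla\Gamma|^{\alpha}\mathcal M^-_{\lambda,\Lambda}(D^2\Gamma)
\;\ge\; \kappa\,(\gamma c_1)^{\alpha}\,c_1\,r^{-(1+\alpha)(\gamma+1)-1}.
\]
Using $r\le R$ to bound $r^{-s}\ge R^{-s}$ for $s>0$, substituting $c_1=MR^\gamma/(2^\gamma-1)$, and collecting the powers of $R$, the exponent of $R$ simplifies exactly to $-(\alpha+2)$, yielding the desired lower bound
\[
|\nabla\Gamma|^{\alpha}\mathcal M^-_{\lambda,\Lambda}(D^2\Gamma)
\;\ge\; c_0\,\frac{M^{1+\alpha}}{R^{\alpha+2}},
\]
with $c_0=c_0(n,\alpha,\lambda,\Lambda)>0$.

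\textbf{Geometry.} For the gradient bounds, on the compact radial interval $[R/2,R]$ the function $r\mapsto r^{-\gamma-1}$ lies between $R^{-\gamma-1}$ and $2^{\gamma+1}R^{-\gamma-1}$, so $|\nabla\Gamma(x)|=\gamma c_1 r^{-\gamma-1}$ is comparable to $c_1 R^{-\gamma-1}=M/[(2^\gamma-1)R]$; this directly produces the two-sided estimate $A_3 M/R\le|\nabla\Gamma|\le A_4 M/R$ with constants depending only on $n,\alpha,\lambda,\Lambda$. For the linear boundary growth, I will use that $\Gamma(x)=\phi(r)=\int_r^R(-\phi'(s))\,ds$ since $\phi(R)=0$, together with the uniform pointwise estimate just established for $|\phi'|$ on $[R/2,R]$; integrating these upper and lower bounds over $[r,R]$ yields
\[
A_1\,\frac{M}{R}(R-r)\;\le\;\Gamma(x)\;\le\; A_2\,\frac{M}{R}(R-r),
\]
which is precisely the claim since $\operatorname{dist}(x,\partial B_R)=R-|x|$ on $\mathcal A_R$.

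\textbf{Expected difficulties.} The construction is essentially explicit and the main work is bookkeeping: one must choose $\gamma$ so that the Pucci bracket is strictly positive, track how the chosen normalization of $c_1$ combines with the powers of $r$ so that the $R$-scaling in the final inequality comes out to $R^{-(\alpha+2)}$, and verify that the degenerate case $M=0$ is handled by $\Gamma\equiv 0$. No delicate viscosity argument is required because $\Gamma$ is classical on $\overline{\mathcal A_R}$; the only structural choice is fixing $\gamma$ in terms of $n,\lambda,\Lambda$ to outbalance the negative contribution of the tangential eigenvalues in $\mathcal M^{-}_{\lambda,\Lambda}$.
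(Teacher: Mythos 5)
Your proposal is correct and follows essentially the same route as the paper: a radial power barrier $r^{-\gamma}$ with $\gamma$ chosen so that $\lambda(\gamma+1)-\Lambda(n-1)>0$, the same eigenvalue computation for the Pucci operator, and the same two-sided bounds on $|\phi'|$ over $[R/2,R]$ for the gradient and linear-growth estimates. The only (cosmetic) differences are that you carry general $M,R$ directly instead of normalizing to $M=R=1$ and rescaling, and you obtain the distance bounds by integrating $-\phi'$ rather than via convexity and the mean value theorem; the exponent bookkeeping giving $M^{1+\alpha}/R^{2+\alpha}$ checks out.
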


\begin{proof}
	We begin with the case $M=R=1$.  
	Define the radial function
	\[
	\Gamma(x) = \frac{|x|^{-\beta}-1}{2^\beta-1}, 
	\qquad x\in \overline{\Aaa_1},
	\]
	where $\beta>1$ will be fixed later.  	Let $\varphi:[1/2,1]\to \R$ be the profile
	\[
	\varphi(t) = \frac{t^{-\beta}-1}{2^\beta-1}.
	\]
	Then $\Gamma(x)=\varphi(|x|)$, and direct differentiation gives
	\[
	\varphi'(t) = -\frac{\beta}{2^\beta-1}\,t^{-(\beta+1)}, 
	\qquad
	\varphi''(t) = \frac{\beta(\beta+1)}{2^\beta-1}\,t^{-(\beta+2)}.
	\]
	Consequently,
	\[
	\nabla \Gamma(x) = \varphi'(|x|)\frac{x}{|x|},
	\qquad
	D^2\Gamma(x) = \varphi''(|x|)\frac{x\otimes x}{|x|^2}
	+ \frac{\varphi'(|x|)}{|x|}\left(I-\frac{x\otimes x}{|x|^2}\right).
	\]
	Thus the eigenvalues of $D^2\Gamma(x)$ are
	\[
	\mu_1 = \varphi''(|x|) = \frac{\beta(\beta+1)}{2^\beta-1}\,|x|^{-(\beta+2)} > 0 
	\quad \text{ with multiplicity $1$},
	\]
	\[
	\mu_2 = \frac{\varphi'(|x|)}{|x|} = -\frac{\beta}{2^\beta-1}\,|x|^{-(\beta+2)} < 0
	\quad \text{with  multiplicity $n-1$}.
	\]
	
	Applying the Pucci operator,
	\[
	{\cM}^-_{\lambda,\Lambda}(D^2\Gamma(x)) 
	= \lambda \mu_1 + \Lambda(n-1)\mu_2
	= \frac{\beta |x|^{-(\beta+2)}}{2^\beta-1}
	\big( (\beta+1)\lambda - (n-1)\Lambda \big).
	\]
	Choosing
	\[
	\beta := \frac{(n-1)\Lambda}{\lambda}+2,
	\]
	we guarantee that $(\beta+1)\lambda-(n-1)\Lambda>0$, hence
	\[
	{\cM}^-_{\lambda,\Lambda}(D^2\Gamma(x)) \geq c_1 > 0,
	\qquad x\in \Aaa_1,
	\]
	with $c_1=c_1(n,\lambda,\Lambda)$.  
	Since
	\[
	|\nabla \Gamma(x)| 
	= \frac{\beta}{2^\beta-1}|x|^{-(\beta+1)},
	\]
	we conclude
	\[
	\mathcal{L}_\alpha^-(\Gamma)(x)
	= |\nabla \Gamma(x)|^\alpha \, {\cM}^-_{\lambda,\Lambda}(D^2\Gamma(x))
	\;\geq\;  \left(\frac{\beta}{2^{\beta}-1} \right)^\alpha c_1 \geq  c_0 > 0
	\quad \text{in } \Aaa_1,
	\]
	where $c_0=c_0(n,\lambda,\Lambda,\alpha)$.
	Next, we examine the geometry of $\Gamma$.  
	Since $\varphi(1)=0$ and $\varphi$ is convex, for $t\in[1/2,1]$, we have
	\[
	\varphi(t)\geq \varphi(1)+\varphi'(1)(t-1) 
	= -\varphi'(1)(1-t).
	\]
	Because $\varphi'(1)=-\tfrac{\beta}{2^\beta-1}<0$, this yields the explicit lower bound
	\[
	\varphi(t)\geq \frac{\beta}{2^\beta-1}(1-t).
	\]
	Thus, for $x\in \Aaa_1$,
	\[
	\Gamma(x)=\varphi(|x|) \geq 
	\frac{\beta}{2^\beta-1}\,\dist(x,\partial B_1).
	\]
	
	For the upper bound, by the mean value theorem there exists $\eta_t\in(t,1)$ such that
	\[
	\varphi(t) = -\varphi'(\eta_t)(1-t).
	\]
	Since
	\[
	-\varphi'(s)=\frac{\beta}{2^\beta-1}\,s^{-(\beta+1)},
	\qquad s\in[1/2,1],
	\]
	we deduce the two–sided bound
	\[
	\frac{\beta}{2^\beta-1}
	\ \leq\  -\varphi'(s)\ \leq\ \frac{\beta\,2^{\beta+1}}{2^\beta-1}.
	\]
	Hence
	\[
	\Gamma(x) \leq \frac{\beta\,2^{\beta+1}}{2^\beta-1}\,\dist(x,\partial B_1),
	\qquad x\in \Aaa_1.
	\]
	Combining the last two displays yields the desired geometric inequalities in the normalized case.
	
	As for the gradient,
	\[
	|\nabla \Gamma(x)| 
	= \frac{\beta}{2^\beta-1}|x|^{-(\beta+1)},
	\qquad x\in \Aaa_1,
	\]
	and since $|x|\in[1/2,1]$, we obtain
	\[
	\frac{\beta}{2^\beta-1}
	\;\leq\; |\nabla \Gamma(x)| \;\leq\; 
	\frac{\beta\,2^{\beta+1}}{2^\beta-1}.
	\]
	
	\smallskip
	Since $\Gamma$ is given by the composition of smooth functions in the annulus $\overline{\Aaa_1}$ we deduce that $\Gamma$ belongs to $C^\infty(\overline{\Aaa_1})$, and in particular,
	all derivatives extend continuously up to $\partial B_{1/2}$ and $\partial B_1$.
	
	We now scale back to arbitrary $M>0$, $R>0$.  
	Define
	\[
	\Gamma_{M,R}(x) := M \,\Gamma\!\left(\frac{x}{R}\right),
	\qquad x\in \overline{\Aaa_R}.
	\]
	By construction,
	\[
	\Gamma_{M,R}=0 \text{ on }\partial B_R,
	\qquad
	\Gamma_{M,R}=M \text{ on }\partial B_{R/2}.
	\]
	Moreover,
	\[
	\nabla \Gamma_{M,R}(x)=\frac{M}{R}\nabla \Gamma\left(\frac{x}{R}\right), 
	\qquad
	D^2\Gamma_{M,R}(x)=\frac{M}{R^2}D^2\Gamma\left(\frac{x}{R}\right),
	\]
	and therefore
	\[
	\mathcal{L}_\alpha^-(\Gamma_{M,R})(x)
	= \frac{M^{1+\alpha}}{R^{2+\alpha}}
	\left(\mathcal{L}_\alpha^-(\Gamma)\left(\frac{x}{R}\right) \right)
	\ \ge\ c_0 \frac{M^{1+\alpha}}{R^{2+\alpha}}.
	\]
	As the geometric estimates also scale accordingly, we obtain
	\[
	A_1\frac{M}{R}\dist(x,\partial B_R)
	\;\leq\; \Gamma_{M,R}(x) \;\leq\;
	A_2\frac{M}{R}\dist(x,\partial B_R),
	\qquad
	A_3 \frac{M}{R}
	\;\leq\; |\nabla \Gamma_{M,R}(x)|
	\;\leq\; A_4 \frac{M}{R}.
	\]
	Since $\Gamma\in C^\infty(\overline{\Aaa_1})$ and smoothness is preserved under scaling and multiplication, it follows that $\Gamma_{M,R}\in C^\infty(\overline{\Aaa_R})$.  
	This completes the proof.
\end{proof}


\section{Weak Harnack and Harnack Inequalities } \label{s:5}

This section aims to establish both the weak Harnack and the Harnack inequalities for equation \eqref{eq:main}.
The weak Harnack inequality is obtained by suitably combining the result of Imbert and Silvestre \cite{Imbert-Silvestre2016} for the large gradient regime with Lemma \ref{lem:BD-to-Pucci-LG}.
The full Harnack inequality follows a similar strategy. However, in this case, it is essential to carefully track the monotone dependence with respect to the threshold $\gamma$ for the Harnack constant.

\subsection{Weak Harnack Inequality}

We first restate the weak Harnack inequality obtained in \cite[Theorem 5.1]{Imbert-Silvestre2016} in a convenient way.

\begin{theorem}\label{thm:WH-LG-Pucci}
	Fix $n\ge2$ and $0<\lambda\le\Lambda$. There exist small constants $\varepsilon_0>0$ and $\varepsilon>0$, and a constant $C=C(n,\lambda,\Lambda)\ge1$, such that if $\gamma\le\varepsilon_0$ and $u\in C(B_2)$ satisfies, in the viscosity sense,
	\[
	u\ge0 \ \text{in } B_2,\qquad \mathcal M^-_{\lambda,\Lambda}(D^2u)\ \le\ 1 \ \ \text{in } \{x\in B_2:\ |\nabla u(x)|\ge\gamma\},
	\]
	and $\inf_{B_1}u\le 1$, then $\bigl|\{u>t\}\cap B_1\bigr|\le C\,t^{-\varepsilon}$ for all $t>0$. In particular, $\ \|u\|_{L^{\varepsilon/2}(B_1)}\le C$.
\end{theorem}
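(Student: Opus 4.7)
The plan is to derive this as a direct reformulation of the weak Harnack inequality in the large gradient regime obtained by Imbert and Silvestre in \cite[Theorem~5.1]{Imbert-Silvestre2016}. The hypothesis $\mathcal M^-_{\lambda,\Lambda}(D^2u) \le 1$ on $\{|\nabla u| \ge \gamma\}$, read in the sense of Definition~\ref{TERESA}, matches exactly the Pucci extremal inequality that governs their theorem; by Remark~\ref{rem:LG-IS-equivalence} it can equivalently be phrased as $u$ being a viscosity subsolution of $\mathcal P^-_{\gamma,0}(\nabla u, D^2u) = 1$ with vanishing first--order coefficient. Provided the large--gradient threshold $\gamma$ is at most the universal constant $\varepsilon_0=\varepsilon_0(n,\lambda,\Lambda)$ supplied by their construction, and with the normalization $\inf_{B_1} u \le 1$, the Imbert--Silvestre weak Harnack furnishes universal constants $\varepsilon>0$ and $C\ge 1$ such that
\[
\bigl|\{u > t\}\cap B_1\bigr|\ \le\ C\,t^{-\varepsilon}, \qquad \forall\, t > 0.
\]

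The second step is to upgrade this distributional bound to the stated $L^{\varepsilon/2}$ estimate via the layer cake formula
\[
\int_{B_1} u^{\varepsilon/2}\, dx \;=\; \frac{\varepsilon}{2}\int_0^{\infty} t^{\varepsilon/2-1}\,\bigl|\{u > t\}\cap B_1\bigr|\, dt.
\]
I would split the integral at $t=1$: on $(0,1]$ the distribution function is trivially bounded by $|B_1|$ and the power $t^{\varepsilon/2-1}$ is integrable at the origin; on $[1,\infty)$ the Imbert--Silvestre estimate bounds the integrand by $C\,t^{-\varepsilon/2-1}$, integrable at infinity. Summing both contributions and taking an $(\varepsilon/2)$--root yields $\|u\|_{L^{\varepsilon/2}(B_1)} \le C$ with the resulting constant depending only on $n,\lambda,\Lambda$.

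Since the deep analytic content, namely the Krylov--Safonov type measure estimate for viscosity supersolutions of extremal equations acting only on the large--gradient set, is already supplied by \cite{Imbert-Silvestre2016}, no substantial obstacle remains. The only points requiring care are (i) matching the large--gradient viscosity notion of Definition~\ref{TERESA} with the one used in \cite{Imbert-Silvestre2016}, which is immediate since the two agree and our $\beta=0$ removes any first--order contribution, and (ii) tracking the universality of the threshold $\varepsilon_0$, the exponent $\varepsilon$, and the constant $C$ across the layer cake step, so that $C$ in the final statement depends only on $n$, $\lambda$, $\Lambda$ as claimed.
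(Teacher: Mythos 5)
Your proposal is correct and follows essentially the same route as the paper: recast the hypothesis via Remark~\ref{rem:LG-IS-equivalence} (with $\beta=0$) as a viscosity inequality for $\mathcal P^{-}_{\gamma,0}$, invoke \cite[Theorem~5.1]{Imbert-Silvestre2016} with the threshold $\gamma\le\varepsilon_0$ to get the measure decay, and then upgrade to the $L^{\varepsilon/2}$ bound by the layer--cake formula split at $t=1$. The paper's proof is exactly this argument, so nothing is missing.
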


\begin{proof} 
 By Remark~\ref{rem:LG-IS-equivalence} with $\beta=0$, the hypotheses implies that
$u$ is a viscosity subsolution of
\[
\mathcal P^{-}_{\gamma,0}\big(\nabla u,D^{2}u\big)\ \le\ 1 \quad\text{in } B_{2}.
\]
Hence $u$ satisfies the assumptions of \cite[Thm.~5.1]{Imbert-Silvestre2016} (with threshold $\gamma\le\varepsilon_{0}$),
which yields
\[
\bigl|\{u>t\}\cap B_{1}\bigr|\ \le\ C\,t^{-\varepsilon}\qquad\forall\,t>0.
\]
Fix $\theta:=\varepsilon/2$. The layer–cake representation gives
\[
\int_{B_{1}}u^{\theta}
=\theta\!\int_{0}^{\infty} t^{\theta-1}\,|\{u>t\}\cap B_{1}|\,dt
\le |B_{1}|+\theta C\!\int_{1}^{\infty} t^{\theta-1-\varepsilon}\,dt
\le C,
\]
so $\|u\|_{L^{\varepsilon/2}(B_{1})}\le C$, as claimed.
\end{proof}

We are now ready to prove the weak Harnack inequality for $u$ that satisfies the following inequality
\begin{equation} \label{eq:supersolPucci}
    |\nabla u|^{\alpha}\mathcal M^-_{\lambda,\Lambda}(D^2u)\ \le\ \|f\|_{L^\infty(B_1)}\quad\text{in }B_1.
\end{equation}

\begin{theorem}[$L^{\varepsilon/2}$ estimate for $|\nabla u|^{\alpha}\mathcal M^-_{\lambda,\Lambda}(D^2u)$]\label{thm:WH}
	Fix $n\ge2$ and $0<\lambda\le\Lambda$, $\alpha>0$. There exist $\varepsilon=\varepsilon(n,\lambda,\Lambda)\in(0,1)$ and a constant $C_{\mathrm{WH}}=C_{\mathrm{WH}}(n,\lambda,\Lambda)\ge1$ such that, if $u\in C(B_1)$ is nonnegative and \eqref{eq:supersolPucci} holds in the sense of Definition \ref{def:BD},
	then
	\[
	\|u\|_{L^{\varepsilon/2}(B_{1/2})}\ \le\ C_{\mathrm{WH}}\Big(\inf_{B_{1/2}}u+\|f\|_{L^\infty(B_1)}^{\frac{1}{1+\alpha}}\Big).
	\]
\end{theorem}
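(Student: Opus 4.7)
The plan is to reduce the statement to the Imbert--Silvestre weak Harnack (Theorem~\ref{thm:WH-LG-Pucci}) by a rescaling that simultaneously normalizes the infimum and makes the large-gradient right-hand side subunitary. The key engine is Lemma~\ref{lem:BD-to-Pucci-LG}, which turns a Birindelli--Demengel supersolution of the degenerate equation into a linear Pucci inequality on $\{|\nabla v|\ge\gamma\}$, at the price of a factor $\gamma^{-\alpha}$ on the right-hand side. Calibrating that factor against a multiplicative normalization in $u$ is precisely what produces the exponent $\tfrac{1}{1+\alpha}$ in the final bound.

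Concretely, let $g:=\|f\|_{L^{\infty}(B_1)}$, set $K:=\inf_{B_{1/2}}u+g^{1/(1+\alpha)}$, and assume first $K>0$ (the case $K=0$ follows by replacing $g$ with $g+\delta$ and letting $\delta\downarrow 0$). I would choose a normalization constant $M:=\inf_{B_{1/2}}u+c_\ast g^{1/(1+\alpha)}$, with $c_\ast=c_\ast(n,\lambda,\Lambda,\alpha)>0$ to be fixed, and define
\[
v(y):=\frac{u(y/2)}{M},\qquad y\in B_2.
\]
By a direct check with test functions (or by Remark~\ref{rem:rescaling} with $a=1/M$, $b=1/2$), $v\in C(B_2)$ is a viscosity supersolution, in the sense of Definition~\ref{def:BD}, of
\[
|\nabla v|^{\alpha}\mathcal M^-_{\lambda,\Lambda}(D^2v)\ \le\ 2^{-(2+\alpha)}M^{-(1+\alpha)}g\qquad\text{in }B_2.
\]
Applying Lemma~\ref{lem:BD-to-Pucci-LG} with threshold $\gamma=\varepsilon_0$ (the small constant from Theorem~\ref{thm:WH-LG-Pucci}) yields, in the large-gradient sense of Definition~\ref{TERESA},
\[
\mathcal M^-_{\lambda,\Lambda}(D^2v)\ \le\ \varepsilon_0^{-\alpha}\,2^{-(2+\alpha)}\,M^{-(1+\alpha)}\,g\qquad\text{on }\{|\nabla v|\ge\varepsilon_0\}\cap B_2.
\]
With $c_\ast$ chosen large enough (depending only on $n,\lambda,\Lambda,\alpha$) the right-hand side is $\le 1$; and by construction $\inf_{B_1}v=\inf_{B_{1/2}}u/M\le 1$.

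Thus $v$ satisfies the hypotheses of Theorem~\ref{thm:WH-LG-Pucci}, which provides $\|v\|_{L^{\varepsilon/2}(B_1)}\le C(n,\lambda,\Lambda)$ for a universal exponent $\varepsilon\in(0,1)$. Changing variables $x=y/2$ gives the scaling identity
\[
\|u\|_{L^{\varepsilon/2}(B_{1/2})}\ =\ M\,2^{-2n/\varepsilon}\,\|v\|_{L^{\varepsilon/2}(B_1)}\ \le\ C_{\mathrm{WH}}\,M\ =\ C_{\mathrm{WH}}\left(\inf_{B_{1/2}}u+c_\ast g^{1/(1+\alpha)}\right),
\]
which is the desired inequality after absorbing $c_\ast$ into $C_{\mathrm{WH}}$. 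The main delicate point is the simultaneous calibration of the normalization $M$ and of the threshold $\gamma=\varepsilon_0$: the degenerate weight $\gamma^{-\alpha}$ introduced by Lemma~\ref{lem:BD-to-Pucci-LG} must be balanced against the homogeneity factor $M^{-(1+\alpha)}$, and this balance is exactly what fixes the exponent $\tfrac{1}{1+\alpha}$ on $\|f\|_{L^\infty(B_1)}$ in the final estimate.
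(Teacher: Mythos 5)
Your proposal is correct and follows essentially the paper's own route: Lemma~\ref{lem:BD-to-Pucci-LG} to pass to a large-gradient Pucci inequality and then Theorem~\ref{thm:WH-LG-Pucci} after a normalization, the only difference being that you fix the threshold at $\varepsilon_0$ and inflate the normalizer $M$ by $c_\ast$, whereas the paper keeps $K=\inf_{B_{1/2}}u+\|f\|_{L^\infty}^{1/(1+\alpha)}$ and chooses the threshold adaptively as $\gamma=\min\{\varepsilon_0K,\|f\|_{L^\infty}^{1/(1+\alpha)}\}$ (your version also handles the $B_1\to B_2$ rescaling and the degenerate case $K=0$ more explicitly). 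One small touch-up: to get $C_{\mathrm{WH}}=C_{\mathrm{WH}}(n,\lambda,\Lambda)$ independent of $\alpha$ as stated, take $c_\ast=\varepsilon_0^{-1}$, since then $\varepsilon_0^{-\alpha}\,2^{-(2+\alpha)}\,c_\ast^{-(1+\alpha)}=\varepsilon_0\,2^{-(2+\alpha)}\le1$, so your "$c_\ast$ depending on $\alpha$" is unnecessary.
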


\begin{proof} 
	Let us fix $\gamma>0$. Since $u$ solves \eqref{eq:supersolPucci} in the sense of Definition \ref{def:BD}, applying Lemma \ref{lem:BD-to-Pucci-LG} yields, in the viscosity sense of Definition~\ref{TERESA},
	\begin{equation}\label{eq:LG-reduction}
		\mathcal M^-_{\lambda,\Lambda}(D^2u)\ \le\ \gamma^{-\alpha}\,\|f\|_{L^\infty(B_1)}
		\qquad\text{in } \{|\nabla u|\ge\gamma\}\cap B_1 .
	\end{equation}
	Equivalently, by Remark~\ref{rem:LG-IS-equivalence}, we get
	\[
	\mathcal P^{-}_{\gamma,0}\!\big(\nabla u,D^2u\big)\ \le\ \gamma^{-\alpha}\,\|f\|_{L^\infty(B_1)} \quad\text{in } B_1.
	\]

	 Now, in order to reduce to the setting of Theorem \ref{thm:WH-LG-Pucci}, we define
	\[
	K:=\inf_{B_{1/2}}u+\|f\|_{L^\infty(B_1)}^{\frac{1}{1+\alpha}},\quad \quad  v:=\frac{u}{K}.
	\]
	Then $v\ge0$ in $B_1$ and $\inf_{B_{1/2}}v\le1$. From \eqref{eq:LG-reduction} and $\nabla v=\nabla u/K$, we obtain
	\begin{equation}\label{eq:v-ineq}
		\mathcal M^-_{\lambda,\Lambda}(D^2v)\ \le\ \frac{\gamma^{-\alpha}\,\|f\|_{L^\infty(B_1)}}{K}
		\qquad\text{in }\ \{|\nabla v|\ge \gamma/K\}\cap B_1 .
	\end{equation}

	Let $\varepsilon_0\in(0,1)$ be the small parameter in Theorem \ref{thm:WH-LG-Pucci}. Consider
	\[
	\gamma:=\min\Big\{\varepsilon_0\,K,\ \ \|f\|_{L^\infty(B_1)}^{\frac{1}{1+\alpha}}\Big\}.
	\]
	By definition, $\gamma/K\le\varepsilon_0$, and since $K\ge \|f\|_{L^\infty(B_1)}^{\frac{1}{1+\alpha}}$, we also have
	\[
	\frac{\gamma^{-\alpha}\,\|f\|_{L^\infty(B_1)}}{K}
	\ \le\
	\frac{\left(\|f\|_{L^\infty}^{\frac{1}{1+\alpha}}\right)^{-\alpha}\|f\|_{L^\infty}}{K}
	=\frac{\|f\|_{L^\infty}^{\frac{1}{1+\alpha}}}{K}\ \le\ 1.
	\]
	Thus, by \eqref{eq:v-ineq},
	\begin{equation}\label{eq:v-ready}
		\mathcal M^-_{\lambda,\Lambda}(D^2v)\ \le\ 1
		\quad\text{in }\ \{|\nabla v|\ge \gamma/K\}\cap B_1,
		\qquad \frac{\gamma}{K}\le\varepsilon_0,
		\qquad \inf_{B_{1/2}}v\le1 .
	\end{equation}

From \eqref{eq:v-ready}, $v$ satisfies the hypotheses of Theorem~\ref{thm:WH-LG-Pucci}. Hence there exist $\varepsilon=\varepsilon(n,\lambda,\Lambda)\in(0,1)$ and $C_{\mathrm{WH}}=C_{\mathrm{WH}}(n,\lambda,\Lambda)\ge1$ such that $\|v\|_{L^{\varepsilon/2}(B_{1/2})}\le C_{\mathrm{WH}}$. Since $u=K\,v$, it follows that
\[
\|u\|_{L^{\varepsilon/2}(B_{1/2})}
=K\,\|v\|_{L^{\varepsilon/2}(B_{1/2})}
\ \le\ C_{\mathrm{WH}}\Big(\inf_{B_{1/2}}u+\|f\|_{L^\infty(B_1)}^{\frac{1}{1+\alpha}}\Big),
\]
which is the desired estimate.

\end{proof}

\subsection{Harnack Inequality}

A key step in establishing the Harnack inequality for \eqref{eq:main} is the observation that the constant in the Harnack inequality of \cite[Theorem 1.3]{Imbert-Silvestre2016} depends monotonically on the threshold defining the large gradient regime. This fact is formalized in the following remark.

\begin{remark}\label{rem:monotone-r}
	Let $C_0>0$ and for $\tau>0$ define the admissible class
	\[
	\mathcal A(\tau):=\Bigl\{\,u\in C(\overline{B_1}),\ u\ge0:\ 
	\mathcal P^{-}_{\tau,0}(\nabla u,D^2u)\le C_0\ \text{and}\
	\mathcal P^{+}_{\tau,0}(\nabla u,D^2u)\ge -\,C_0\ \text{in }B_1\Bigr\},
	\]
	or equivalently, by Remark~\ref{rem:LG-IS-equivalence}, $u \in \Aaa(\tau)$ if and only if
	\[
	\mathcal M^{-}_{\lambda,\Lambda}(D^2u)\le C_0\ \ \text{and}\ \ 
	\mathcal M^{+}_{\lambda,\Lambda}(D^2u)\ge -\,C_0
	\quad\text{in } \{|\nabla u|\ge \tau\}\cap B_1.
	\]
	
\noindent	If $\tau_2\ge \tau_1$ and $u\in\mathcal A(\tau_1)$, then any test function with $|\nabla\varphi|\ge\tau_2$ also satisfies
	$|\nabla\varphi|\ge\tau_1$, so the defining viscosity inequalities hold there as well. Hence for $\tau_2\ge \tau_1$ one has $\mathcal A(\tau_1)\subseteq \mathcal A(\tau_2)$.
    
\noindent 	Define the set of admissible Harnack constants and the optimal one as
	\[
	\mathcal C_H(\tau):=\Bigl\{\,C>0:\ \sup_{B_{1/2}}u\le C\bigl(\inf_{B_{1/2}}u+C_0\bigr)\ 
	\text{for all }u\in\mathcal A(\tau)\Bigr\},
	\qquad
	\Phi_{n,\lambda,\Lambda}(\tau):=\inf_{C \in \mathcal C_H(\tau)}\, C .
	\]
	Since $\mathcal A(\tau_1)\subseteq \mathcal A(\tau_2)$ for $\tau_2\ge\tau_1$, the requirement
	“for all $u\in\mathcal A(\tau)$” becomes \emph{stronger} as $\tau$ increases, hence
	\[
	\mathcal C_H(\tau_2)\subseteq \mathcal C_H(\tau_1)\quad\Rightarrow\quad
	\Phi_{n,\lambda,\Lambda}(\tau_2)\ \ge\ \Phi_{n,\lambda,\Lambda}(\tau_1)\qquad \text{ for }\tau_2\ge\tau_1 .
	\]
	Thus $\Phi_{n,\lambda,\Lambda}$ is nondecreasing in $\tau$. 
\end{remark}

\begin{theorem}[Harnack inequality for degenerate elliptic operators] \label{Thm:HarnackBD}
	Fix $n\ge2$ and $0<\lambda\le\Lambda$. There exists a constant $C_{H}=C_{H}(n,\lambda,\Lambda)>0$ such that the following holds.
	Let $u\in C(\overline{B_1})$, $u\ge0$, and assume that, in the viscosity sense,
	\begin{equation}\label{eq:main-H}
		|\nabla u|^{\alpha}\,\cM^-_{\lambda,\Lambda}(D^2u)\le\|f\|_{L^\infty(B_1)}
		\quad\text{and}\quad
		|\nabla u|^{\alpha}\,\cM^+_{\lambda,\Lambda}(D^2u) \ge -\,\|f\|_{L^\infty(B_1)}
		\qquad\text{in } B_1.
	\end{equation}
	Then
	\begin{equation}\label{eq:OurHarnack-final}
		\sup_{B_{1/2}} u\ \le\
		C_{H}\Bigl(\inf_{B_{1/2}}u+\|f\|_{L^\infty(B_1)}^{\frac{1}{1+\alpha}}\Bigr).
	\end{equation}
\end{theorem}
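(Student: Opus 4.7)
The strategy closely parallels the proof of the weak Harnack inequality \textbf{(Theorem~\ref{thm:WH})}: first convert the degenerate differential inequalities \eqref{eq:main-H} into two-sided Pucci inequalities on the large-gradient set via Lemma~\ref{lem:BD-to-Pucci-LG}; then renormalize so that the hypotheses of the Imbert--Silvestre Harnack inequality for large-gradient operators are met; and finally use the monotonicity property of the optimal Harnack constant stated in Remark~\ref{rem:monotone-r} to conclude with a constant that is truly universal, i.e. independent of $f$ and of $u$.

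More concretely, I would set
\[
K:=\inf_{B_{1/2}}u+\|f\|_{L^\infty(B_1)}^{\frac{1}{1+\alpha}},
\qquad v:=\frac{u}{K},
\]
so that $v\ge 0$, $\inf_{B_{1/2}}v\le 1$, and $\nabla v=\nabla u/K$, $D^2v=D^2u/K$. Applying Lemma~\ref{lem:BD-to-Pucci-LG} to both inequalities in \eqref{eq:main-H} and rescaling gives, for every $\gamma>0$, in the sense of Definition~\ref{TERESA},
\[
\mathcal M^-_{\lambda,\Lambda}(D^2v)\ \le\ \frac{\gamma^{-\alpha}\|f\|_{L^\infty(B_1)}}{K},
\quad
\mathcal M^+_{\lambda,\Lambda}(D^2v)\ \ge\ -\,\frac{\gamma^{-\alpha}\|f\|_{L^\infty(B_1)}}{K}
\qquad\text{in } \{|\nabla v|\ge \gamma/K\}\cap B_1.
\]
Let $\varepsilon_0>0$ be a universal threshold (the smaller of the Imbert--Silvestre thresholds for weak Harnack and Harnack). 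Choosing $\gamma:=\min\{\varepsilon_0 K,\,\|f\|_{L^\infty(B_1)}^{1/(1+\alpha)}\}$ I would get simultaneously $\gamma/K\le\varepsilon_0$ and $\gamma^{-\alpha}\|f\|_{L^\infty(B_1)}/K\le 1$, so $v$ belongs to the admissible class $\mathcal A(\gamma/K)$ of Remark~\ref{rem:monotone-r} with $C_0=1$. The Imbert--Silvestre Harnack inequality for large-gradient operators \cite[Theorem~1.3]{Imbert-Silvestre2016} then yields $\sup_{B_{1/2}}v\le \Phi_{n,\lambda,\Lambda}(\gamma/K)\bigl(\inf_{B_{1/2}}v+1\bigr)$. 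Undoing the scaling and recalling the definition of $K$ produces \eqref{eq:OurHarnack-final}.

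The main obstacle, and the only subtle ingredient beyond the weak Harnack argument, lies in ensuring that the resulting Harnack constant is genuinely universal. A priori, the threshold $\gamma/K$ depends on both $\|f\|_{L^\infty(B_1)}$ and $\inf_{B_{1/2}}u$, so the Harnack constant obtained directly from Imbert--Silvestre would inherit this dependence. Here the monotonicity $\Phi_{n,\lambda,\Lambda}(\tau_1)\le\Phi_{n,\lambda,\Lambda}(\tau_2)$ for $\tau_1\le\tau_2$ established in Remark~\ref{rem:monotone-r} is crucial: since $\gamma/K\le\varepsilon_0$, one may replace $\Phi_{n,\lambda,\Lambda}(\gamma/K)$ by the data-independent quantity $\Phi_{n,\lambda,\Lambda}(\varepsilon_0)$, which becomes the universal constant $C_H=C_H(n,\lambda,\Lambda)$ in \eqref{eq:OurHarnack-final}.
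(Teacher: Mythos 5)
Your strategy is essentially the paper's: reduce \eqref{eq:main-H} to two-sided Pucci inequalities on the large-gradient set via Lemma~\ref{lem:BD-to-Pucci-LG}, choose the threshold of size $\|f\|_{L^\infty(B_1)}^{1/(1+\alpha)}$, and invoke the monotonicity of the optimal constant $\Phi_{n,\lambda,\Lambda}$ from Remark~\ref{rem:monotone-r} to obtain a data-independent $C_H$ (the paper does this without dividing by $K$: it takes $\gamma^{1+\alpha}=\|f\|_{L^\infty(B_1)}$, so that $\tau=\gamma/(\inf_{B_{1/2}}u+\gamma)\le 1$ and $C_H=\Phi_{n,\lambda,\Lambda}(1)$). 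However, two steps of your write-up do not go through as stated. First, the homogeneous case $\|f\|_{L^\infty(B_1)}=0$ breaks your scheme: then $\gamma=\min\{\varepsilon_0K,\|f\|^{1/(1+\alpha)}\}=0$, which is not an admissible threshold in Lemma~\ref{lem:BD-to-Pucci-LG} or in Remark~\ref{rem:monotone-r}, and if in addition $\inf_{B_{1/2}}u=0$ then $K=0$ and $v=u/K$ is undefined. The paper handles this case separately, removing the gradient weight with \cite[Lemma 6]{Imbert-Silvestre2013} and applying the classical Harnack inequality of Caffarelli--Cabr\'e; alternatively, you could run your argument with $\|f\|_{L^\infty}$ replaced by $\delta>0$ and let $\delta\to0$. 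Either way, the case must be addressed.

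Second, with your choice $\gamma=\min\{\varepsilon_0K,\|f\|_{L^\infty(B_1)}^{1/(1+\alpha)}\}$ the claimed bound $\gamma^{-\alpha}\|f\|_{L^\infty(B_1)}/K\le 1$ is false in the regime $\varepsilon_0K<\|f\|_{L^\infty(B_1)}^{1/(1+\alpha)}$: for instance $\inf_{B_{1/2}}u=0$, $\|f\|_{L^\infty(B_1)}=1$, $\alpha>0$ gives $\gamma=\varepsilon_0$ and $\gamma^{-\alpha}\|f\|_{L^\infty(B_1)}/K=\varepsilon_0^{-\alpha}>1$, so $v$ need not lie in the class $\mathcal A(\gamma/K)$ with $C_0=1$. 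This does not destroy universality, since one still gets $C_0\le\varepsilon_0^{-\alpha}$, but it introduces a dependence on $\alpha$ in the Harnack constant (the theorem asserts $C_H=C_H(n,\lambda,\Lambda)$, and the paper explicitly arranges for $\alpha$ to enter only through the choice of $\gamma$), and as written the inequality is simply incorrect. The clean repair is the paper's: for the full Harnack inequality \cite[Theorem 1.3]{Imbert-Silvestre2016} no smallness of the threshold is required, so take $\gamma=\|f\|_{L^\infty(B_1)}^{1/(1+\alpha)}$ outright; then $\gamma/K\le1$ and $\gamma^{-\alpha}\|f\|_{L^\infty(B_1)}/K=\|f\|_{L^\infty(B_1)}^{1/(1+\alpha)}/K\le1$, and the monotonicity of $\Phi_{n,\lambda,\Lambda}$ lets you conclude with the data-independent value $\Phi_{n,\lambda,\Lambda}(1)$.
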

\begin{proof}
We distinguish between the case when the right-hand side is zero and the case when it is not.

	\textit{Case \(\|f\|_{L^\infty(B_1)}=0\).}
	The equations \eqref{eq:main-H} reduce to
	\(
	|\nabla u|^{\alpha}\cM^-_{\lambda,\Lambda}(D^2u)\le0
	\) and
	\(
	|\nabla u|^{\alpha}\cM^+_{\lambda,\Lambda}(D^2u)\ge0
	\)
	in \(B_1\).
	By \cite[Lemma 6]{Imbert-Silvestre2013} we obtain the unweighted bounds
	\(
	\cM^-_{\lambda,\Lambda}(D^2u)\le0\le\cM^+_{\lambda,\Lambda}(D^2u)
	\)
	in \(B_1\), and the classical Harnack inequality \cite[Theorem~4.3 ]{Caffarelli-Cabre1995} yields
	\(
	\sup_{B_{1/2}}u\le C_H(n,\lambda,\Lambda)\,\inf_{B_{1/2}}u
	\),
	which is \eqref{eq:OurHarnack-final} when  the right hand side is \(0\).
	
	\medskip
\textit{Case \(\|f\|_{L^\infty(B_1)}>0\).}
	Let $\gamma>0$ to be chosen later. By Lemma~\ref{lem:BD-to-Pucci-LG}  and Remark~\ref{rem:LG-IS-equivalence}, we deduce that
	\[
	\mathcal P^{-}_{\gamma,0}(\nabla u,D^2u)\ \le\ g_\gamma,
	\qquad
	\mathcal P^{+}_{\gamma,0}(\nabla u,D^2u)\ \ge\ -\,g_\gamma
	\quad\text{in }B_1,
	\]
	with \(g_\gamma:=\gamma^{-\alpha}\|f\|_{L^\infty(B_1)}\).
	From the Harnack inequality of \cite[Thm.~1.3]{Imbert-Silvestre2016} we know that $\Phi_{n,\lambda,\Lambda}>-\infty$, where $\Phi_{n,\lambda,\Lambda}$ is defined in Remark \ref{rem:monotone-r}. This implies the existence of a nondecreasing function
	\(\Phi_{n,\lambda,\Lambda}:(0,\infty)\to[1,\infty)\) such that
	\begin{equation}\label{eq:Harnack-Phi}
		\sup_{B_{1/2}} u\ \le\ \Phi_{n,\lambda,\Lambda}(\tau)\,\Big(\inf_{B_{1/2}}u+g_\gamma\Big),
		\qquad \text{where }
		\tau:=\frac{\gamma}{\inf_{B_{1/2}}u+g_\gamma}.
	\end{equation}
	Now, we choose \(\gamma>0\) so that \(\gamma^{\alpha+1}=\|f\|_{L^\infty(B_1)}\).
	This gives that  \(g_\gamma=\gamma=\|f\|_{L^\infty(B_1)}^{\frac{1}{1+\alpha}}\) and therefore
	\[
	\tau=\frac{\gamma}{\inf_{B_{1/2}}u+\gamma}\ \le\ 1.
	\]
	From the monotonicity in \(\tau\) of $\Phi_{n,\lambda,\Lambda}$, \eqref{eq:Harnack-Phi} becomes
	\[
	\sup_{B_{1/2}} u\ \le\ \Phi_{n,\lambda,\Lambda}(1)\,\Big(\inf_{B_{1/2}}u+\gamma\Big).
	\]
	Setting \(C_H(n,\lambda,\Lambda):=\Phi_{n,\lambda,\Lambda}(1)\) and recalling \(\gamma=\|f\|_{L^\infty(B_1)}^{\frac{1}{1+\alpha}}\),
	we conclude
	\[
	\sup_{B_{1/2}} u\ \le\ C_H(n,\lambda,\Lambda)\,
	\Big(\inf_{B_{1/2}}u+\|f\|_{L^\infty(B_1)}^{\frac{1}{1+\alpha}}\Big),
	\]
	which is \eqref{eq:OurHarnack-final}. Note that once \(\tau\le1\) is fixed, the Harnack constant depends only on \(n,\lambda\) and \(\Lambda\).
	The parameter \(\alpha\) plays a role only in the preliminary choice of \(\gamma\).
\end{proof}

\section{Proof of the quantitative version of the Hopf Lemma} \label{s:6}

In this section we present the proof of Theorem~\ref{HOL-quantitative}, which relies mainly on the weak Harnack inequality established in Theorem~\ref{thm:WH} and on the barrier construction provided by 
Proposition \ref{prop:barrier}.

\begin{proof}[Proof of Theorem \ref{HOL-quantitative}] ~

\noindent\textbf{(A) Linear growth with respect to the distance from the boundary.}
	Let us fix the following notations
	\[
	\mathcal L^-_\alpha(u):=|\nabla u|^{\alpha}\,\mathcal M^-_{\lambda,\Lambda}(D^2u),\qquad
	\mathcal A_1:=B_1\setminus\overline{B_{1/2}},\qquad
	M:=\inf_{B_{1/2}}u\ \ge0.
	\]
	For nonnegative supersolutions of $\mathcal L^-_\alpha(u)\le f^{+}$ in $B_1$, from Theorem \ref{thm:WH}, there exist $C_{\mathrm{WH}}>0$ and $\varepsilon\in(0,1)$ (depending only on $n,\alpha,\lambda,\Lambda$) such that
	\begin{equation}\label{eq:UWH}
		\|u\|_{L^{\varepsilon}(B_{1/2})}\ \le\ C_{\mathrm{WH}}\Big(M+\|f^{+}\|_{L^{\infty}(B_1)}^{\frac{1}{1+\alpha}}\Big).
	\end{equation}
	In particular,
	\begin{equation}\label{eq:Mlower}
		M\ \ge\ C_{\mathrm{WH}}^{-1}\,\|u\|_{L^{\varepsilon}(B_{1/2})}\ -\ \|f^{+}\|_{L^{\infty}(B_1)}^{\frac{1}{1+\alpha}}.
	\end{equation}

	By Proposition~\ref{prop:barrier}, there exist universal constants $c_0>0$, $C_{\mathrm{bar}}>0$, $C_{\mathrm g}>0$ such that, for each $K>0$, one finds $\Gamma_K\in C^\infty(\overline{\mathcal A_1})$ with
	\[
	\Gamma_K=0\ \text{on }\partial B_1,\qquad \Gamma_K=K\ \text{on }\partial B_{1/2},\qquad \mathcal L^-_\alpha(\Gamma_K)\ \ge\ c_0\,K^{1+\alpha}\ \ \text{in }\mathcal A_1,
	\]
	and
	\begin{equation}\label{eq:geom}
		\Gamma_K(x)\ \ge\ C_{\mathrm{bar}}\,K\,\mathrm{dist}(x,\partial B_1),\qquad |\nabla\Gamma_K(x)|\ \ge\ C_{\mathrm g}\,K\quad \text{for all } x\in\overline{\mathcal A_1}.
	\end{equation}

Set $K:=M$ and $\Gamma_M:=\Gamma_K$. We split the proof into different cases. We begin by considering the relevant case in which the right-hand side is sufficiently small with respect to $M$. The complementary situation can be addressed straightforwardly.

\smallskip
\textit{Case 1:  $c_0\,M^{1+\alpha}\ >\ \|f^{+}\|_{L^\infty(B_1)}$.}
 Let us assume that 
\begin{equation}\label{eq:strict-gap-step4}
	c_0\,M^{1+\alpha}\ >\ \|f^{+}\|_{L^\infty(B_1)}.
\end{equation}
By Step~2, we have that $\mathcal L^-_\alpha(\Gamma_M)\ge c_0\,M^{1+\alpha}$. Moreover, the functions $u,\Gamma_M\in C(\overline{\mathcal A_1})$ satisfy
$\Gamma_M\le u$ on $\partial\mathcal A_1$ because
$\Gamma_M=M\le u$ on $\partial B_{1/2}$ and $\Gamma_M=0\le u$ on $\partial B_1$. We claim that $\Gamma_M\le u $ in $\mathcal A_1.$ Indeed, this follows from the comparison principle with strict inequalities on right–hand sides (see, e.g., \cite[Theorem 2.9]{Birindelli-Demengel2004}). Here, for the sake of completeness, we provide a direct viscosity argument. Suppose $\max_{\overline{\mathcal A_1}}(\Gamma_M - u)>0$
and let $x_{*}\in\mathcal A_1$ be an interior maximizer of
$w:=\Gamma_M-u$. Then $u-\Gamma_M$ has a local minimum at $x_{*}$ and,
since $u$ is a viscosity supersolution while $\Gamma_M\in C^\infty$,
\[
|\nabla \Gamma_M(x_*)|^{\alpha}\,
\cM^-_{\lambda,\Lambda}\big(D^{2}\Gamma_M(x_*)\big)
\ \le\ \|f^{+}\|_{L^\infty(B_1)}.
\]
On the other hand, by the barrier property and $M>0$,
\[
|\nabla \Gamma_M(x_*)|^{\alpha}\,
\cM^-_{\lambda,\Lambda}\big(D^{2}\Gamma_M(x_*)\big)
\ \ge\ c_0\,M^{1+\alpha},
\]
contradicting \eqref{eq:strict-gap-step4}. Therefore
\[
\Gamma_M\le u \qquad\text{in }\mathcal A_1.
\]
Now, since $u\ge M$ on $B_{1/2}$, by \eqref{eq:geom}, for any $x\in B_1$, we have
\begin{equation}\label{eq:geomglobal}
	u(x)\ \ge\ C_*\,M\,\mathrm{dist}(x,\partial B_1)
\end{equation}
where $C_*:=\min\{C_{\mathrm{bar}},1\}$. Then, by combining \eqref{eq:geomglobal} and \eqref{eq:Mlower}, we obtain
\begin{equation}\label{primo-bound}
u(x)\ \ge\ C_*\Big(C_{\mathrm{WH}}^{-1}\|u\|_{L^{\varepsilon}(B_{1/2})}
- \|f^{+}\|_{L^{\infty}(B_1)}^{\frac{1}{1+\alpha}}\Big)\,\mathrm{dist}(x,\partial B_1).
\end{equation}
By suitably enlarging the constants if necessary, we set
\[
A_1:=\frac{C_*}{C_{\mathrm{WH}}},\qquad \text{and}\qquad
A_2:=C_*\left( 1+ 2 c_0^{-\frac{1}{1+\alpha}} \right),
\]
so that the bound in \eqref{primo-bound} can be rewritten in a universal form as
\begin{equation}\label{vesr-star}
u(x)\ \ge\
\Big(A_1\|u\|_{L^{\varepsilon}(B_{1/2})}
- A_2\|f^{+}\|_{L^{\infty}(B_1)}^{\frac{1}{1+\alpha}}\Big)\,\mathrm{dist}(x,\partial B_1),
\end{equation}
for any $x\in B_1$.

\smallskip
\textit{Case 2:  $c_0\,M^{1+\alpha}\ \leq \ \|f^{+}\|_{L^\infty(B_1)}$.}
 On the other hand, if \eqref{eq:strict-gap-step4} does not hold, namely, when
\[
M \leq  c_0^{-\frac{1}{1+\alpha}} \| f^+ \|_{L^\infty(B_1)}^{\frac{1}{1+\alpha}},
\]
This implies that
\[
\begin{split}
M - 2 c_0^{-\frac{1}{1+\alpha}} \| f^+ \|_{L^\infty(B_1)}^{\frac{1}{1+\alpha}}
\leq c_0^{-\frac{1}{1+\alpha}} \| f^+ \|_{L^\infty(B_1)}^{\frac{1}{1+\alpha}} \leq 0.
\end{split}
\]
Then, straightforwardly, for all $x\in B_1$, it follows that
\[
u(x) \geq 0 \geq 
C_{*}\left( M -2 c_0^{-\frac{1}{1+\alpha}} \| f^+ \|_{L^\infty(B_1)}^{\frac{1}{1+\alpha}}\right) 
\, \mathrm{dist}(x, B_1),
\]
which together with \eqref{eq:Mlower} ensures
\begin{equation}\label{vers-tilde}
u(x)\geq  \left( C_{*} C_{WH}^{-1}
\| u \|_{L^\varepsilon(B_{1/2})}
-  C_{*}\left(1 + 2 c_0^{-\frac{1}{1+\alpha}} \right) \| f^+ \|_{L^\infty(B_1)}^{\frac{1}{1+\alpha}}
\right)
\mathrm{dist}(x,B_1).
\end{equation}
Therefore in any case combining \eqref{vesr-star} and \eqref{vers-tilde}, for any $x\in B_1$, we reach 
\begin{equation}\label{vers-fin}
u(x)\geq  \left(
A_1\| u \|_{L^\varepsilon(B_{1/2})}
- A_2 \| f^+ \|_{L^\infty(B_1)}^{\frac{1}{1+\alpha}}
\right)
\mathrm{dist}(x,B_1),
\end{equation}
which concludes the proof of (A).

	\medskip
	\noindent\textbf{(B) Control by the interior normal derivative.}
	Assume $u(x_0)=0$ for some $x_0\in\partial B_1$ and the interior normal derivative $\partial_\nu u(x_0)$ exists. 
    For $t>0$, let us set $x_t:=x_0+t\,\nu$. Since $\mathrm{dist}(x_t,\partial B_1)=t$, by applying \eqref{eq:HOL-1} at $x_t$, we get
	\begin{equation}\label{HOLonxt}
	\frac{u(x_t)}{t} \ge \left(A_1\,\|u\|_{L^{\varepsilon}(B_{1/2})}-A_2\,\|f^{+}\|_{L^\infty(B_1)}^{\frac{1}{1+\alpha}}\right).
	\end{equation}
	Letting $t\to 0$ in \eqref{HOLonxt}, we get
\[
\partial_\nu u(x_0)\ge A_1\,\|u\|_{L^{\varepsilon}(B_{1/2})}
- A_2\,\|f^{+}\|_{L^\infty(B_1)}^{\frac{1}{1+\alpha}}.
\]
Consequently,
\[
\|u\|_{L^{\varepsilon}(B_{1/2})} \le \frac{\partial_\nu u(x_0)}{A_1}
+ \frac{A_2}{A_1}\,\|f^{+}\|_{L^\infty(B_1)}^{\frac{1}{1+\alpha}},
\]
    which proves \eqref{eq:HOL-2} with $C:=A_2/A_1$.
	
	\medskip
	\noindent\textbf{(C) Supremum version under a two–sided Pucci inequality.}
	
Applying the interior Harnack inequality of Theorem \ref{Thm:HarnackBD} for nonnegative solutions of \eqref{eq:HOL-3}, and following the argument employed in points (A) and (B), now with 
$\sup_{B_{1/2}}u$ in place of $\|u\|_{L^{\varepsilon}(B_{1/2})}$, 
we deduce the existence of universal constants $A'_1, A'_2, C'>0$ such that    
	\[ u(x) \ge \left( A'_1\,\sup_{B_{1/2}}u-A'_2\,\|f^{+}\|_{L^\infty(B_1)}^{\frac{1}{1+\alpha}}\right)\,\mathrm{dist}(x,\partial B_1),
    \]
    for any $x\in B_1$, and 
    \[\sup_{B_{1/2}}u\le C'\left(\partial_\nu u(x_0)+\|f^{+}\|_{L^\infty(B_1)}^{\frac{1}{1+\alpha}}\right). \]
	
	\medskip
	\noindent\textbf{(D) Unique continuation at the boundary.}
	If $f\le0$ in $B_1$ (hence $f^{+}\equiv0$) and for some $x_0\in\partial B_1$ with $u(x_0)=0$, the interior normal derivative exists and equals zero at $x_0$, then by point (B) we obtain
\[
\|u\|_{L^\varepsilon(B_{1/2})} \le \frac{1}{A_1}\,\partial_\nu u(x_0)=0.
\]
Consequently, recalling that $u\in C(\overline{B_1})$, we conclude that $u\equiv0$ in $B_{1/2}$. Moreover, since $f^{+}\equiv0$, the viscosity inequality $|\nabla u|^{\alpha}\,\mathcal M^-_{\lambda,\Lambda}(D^2u)\le0$ reduces, by  \cite[Lemma 6]{Imbert-Silvestre2013}, to
\[
\mathcal M^{-}_{\lambda,\Lambda}(D^2u) \le 0 \quad \text{in } B_1
\]
in the viscosity sense. Thus, since $u\ge0$ in $B_1$ and vanishes on the nonempty open set $B_{1/2}$, the Strong Maximum Principle (see \cite[Proposition 4.9]{Caffarelli-Cabre1995}) yields $u\equiv0$ in $B_1$.

\end{proof}

\section{Failure of the Hopf Lemma in the large gradient regime}\label{s:7}

It is interesting to note that the Hopf–Oleinik Lemma fails to hold for equations defined in the large gradient regime.
The underlying reason is that such equations carry no information in regions where the gradient is small; consequently, tangential contact with the boundary is permitted.
This phenomenon is formalized in Proposition \ref{prop:hopf-fails-large-gradient-only}, whose proof is presented below.

\begin{proof}[Proof of Proposition \ref{prop:hopf-fails-large-gradient-only}]
	Let $\Upsilon(r)=(1-r)^2$ for $r\in[0,1]$ and set $u(x)=\Upsilon(|x|)$. Then
	$u\in C(\overline{B_1})\cap C^\infty(B_1\setminus\{0\})$. For $x\neq0$, denoting by $r=|x|$, we have
	\[
	\nabla u(x)=-2(1-r)\frac{x}{r},\qquad
	\Delta u(x)=\Upsilon''(r)+\frac{n-1}{r}\Upsilon'(r)=2n-\frac{2(n-1)}{r}.
	\]
    
	Throughout this proof we use Definition \ref{TERESA} with $F=\Delta$ and $L=1$:
	the supersolution condition is checked only at lower contact points with $|\nabla\phi|\ge1$, namely we fix $\gamma=1$. 
    
	First we will show that no $C^2$ test function can touch $u$ from below at the origin.
	Indeed, assume by contradiction that there exists $\phi\in C^2(B_1)$ such that $u-\phi$ has a local minimum at $0$.
	Fixed $e\in\mathbb S^{n-1}$, we set $x=te$. Taylor’s formula for $\phi$ at $0$ gives
\begin{equation}\label{phi(te)}
\phi(te)=\phi(0)+t\,\langle\nabla\phi(0),e\rangle+\tfrac12 t^2\, \langle D^2\phi(0)e, e\rangle+o(t^2),
	\end{equation}
	while
	\[
	u(te)=1-2|t|+t^2.
	\]
	Since $u-\phi$ has a local minimum at $0$, we have $\phi(te)\le u(te)$ whenever $|t|$ is sufficiently small. We treat the two situations $t>0$ and $t<0$ separately.
    
	\textit{(a) Case $t>0$.} Dividing by $t>0$ and using \eqref{phi(te)}, we get 
	\[
	\langle \nabla\phi(0), e\rangle +\frac12 t \langle D^2\phi(0)e,e\rangle +\frac{o(t^2)}{t} \le -2+t,
	\]
	which, letting $t\to 0$, implies
	\begin{equation}\label{eq:assurdo}
	\langle \nabla\phi(0), e \rangle \le -2.
	\end{equation}
	
	\emph{(b) Case $t<0$.} Let $s>0$ such that $t=-s<0$. Dividing by $s>0$ and using \eqref{phi(te)}, we obtain
\[
	-\langle \nabla\phi(0), e\rangle +\tfrac12 s\,\langle D^2\phi(0)e, e\rangle+\tfrac{o(s^2)}{s} \le -2+s.
\]
Letting $s\to 0$, this yields
\[
	\langle \nabla\phi(0),e\rangle \ge 2,
\]
which contradicts \eqref{eq:assurdo}.

Outside the origin $u$ is smooth and thus we can check directly the viscosity condition.
	Let $x_0\in B_1\setminus\{0\}$ and $\phi\in C^2(B_1)$ be such that $u-\phi$ has a local minimum at $x_0$. Since $u$ is smooth on $B_1\setminus\{0\}$ and $\phi\in C^2(B_1)$, the following contact relations hold
	\[
	\nabla\phi(x_0)=\nabla u(x_0)\quad \text{and}\quad D^2\phi(x_0)\ \le\ D^2u(x_0).
	\]
	Consequently,
	\[
	|\nabla\phi(x_0)|=|\nabla u(x_0)|=2(1-|x_0|).
	\]
	By Definition \ref{TERESA}, we need only consider the case $|\nabla\phi(x_0)|\ge1$, i.e. $|x_0|\le\tfrac12$.
	For such $x_0$,
	\[
	\Delta u(x_0)=2n-\frac{2(n-1)}{|x_0|} \le 2n-4(n-1)\ =\ 4-2n \le 0,
	\]
	which, taking traces in $D^2\phi(x_0)\le D^2u(x_0)$, yields
	\[
	\Delta\phi(x_0) \le\ \Delta u(x_0) \le 0.
	\]
	Thus, the supersolution requirement is satisfied at every lower contact point with $|\nabla\phi(x_0)|\ge1$, where the equality case $|\nabla\phi(x_0)|=1$ corresponds to $|x_0|=\frac12$.

	We conclude by checking that $u$ has a tangential behavior at the boundary.
    In fact, for any $\xi\in\partial B_1$ with outward unit normal $\nu_{\rm out}(\xi)=\xi$, we have 
	\[
	\partial_{\nu}^{\rm int}u(\xi)
	=\lim_{t\to0}\frac{u(\xi-t\nu_{\rm out})-u(\xi)}{t}
	=\lim_{t\to0}\frac{(1-(1-t))^2}{t}
	=\lim_{t\to0}\frac{t^2}{t}=0.
	\]
	Hence $u\ge0$ in $B_1$, $u=0$ on $\partial B_1$, and the interior normal derivative at the boundary minimum vanishes. Therefore, Hopf’s boundary point conclusion fails for this large–gradient regime.
\end{proof}

\section{Gradient estimates for solutions to $|\nabla u|^{\alpha} F(D^2 u)=f$} \label{s:8}

We now state a $C^{1,\gamma}$–regularity result for viscosity solutions of the degenerate equation 
\[
|\nabla u|^{\alpha}F(D^2u)=f,
\]
which combines the interior estimates of Imbert–Silvestre in \cite{Imbert-Silvestre2013} with the quantitative Hopf–Oleinik principle established in Theorem \ref{HOL-quantitative}. In particular, we are interested in pointwise gradient estimates. These provide a key tool for the analysis of the free boundary problems addressed in  Theorem~\ref{thm:Lip-FBP} and Theorem~\ref{thm:flame-deg}.

\begin{theorem}\label{thm:gradient-est}
	Assume that $\alpha \ge 0$, $F$ is uniformly elliptic with $F(0)=0$, and $f \in C(B_1)\cap L^\infty(B_1)$. Suppose that $u \in C(B_1)$ is a viscosity solution of $|\nabla u|^{\alpha} F(D^2 u)=f$ in $B_1$. Then there exist $\gamma\in (0,1]$ such that $u\in C_{loc}^{1,\gamma}(B_1)$ and $C>0$ such that
	\begin{equation}\label{eq:C1gamma}
		\|u\|_{C^{1,\gamma}(B_{1/2})}\;\le\; C\Big(\|u\|_{L^\infty(B_1)}+\|f\|_{L^\infty(B_1)}^{\frac{1}{1+\alpha}}\Big).
	\end{equation}
	Moreover, if in addition $u\ge 0$ in $B_1$, then
	\begin{equation}\label{eq:grad0}
		|\nabla u(0)|\;\le\; C\Big(u(0)+\|f\|_{L^\infty(B_1)}^{\frac{1}{1+\alpha}}\Big).
	\end{equation}
	Furthermore, in the case $0\le u \in C(\overline{B_1})$ and there exists $x_0\in\partial B_1$ with $u(x_0)=0$ and the inward unit normal derivative $\partial_\nu u(x_0)$ well defined $($where for the unit ball $\nu=-x_0)$, one has
	\begin{equation}\label{eq:grad0-bdry}
		|\nabla u(0)|\;\le\; C\Big(\partial_\nu u(x_0)+\|f\|_{L^\infty(B_1)}^{\frac{1}{1+\alpha}}\Big).
	\end{equation}
	All the constants above depend only on $\alpha$, the ellipticity constants of $F$, and the dimension $n$.
\end{theorem}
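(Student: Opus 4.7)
The first conclusion, the interior $C^{1,\gamma}_{\mathrm{loc}}$ regularity together with the estimate \eqref{eq:C1gamma}, follows at once from the interior Hölder estimate for the gradient established by Imbert and Silvestre in \cite{Imbert-Silvestre2013}, applied to viscosity solutions of $|\nabla u|^{\alpha}F(D^2u)=f$ in $B_1$. The exponent $\gamma\in(0,1]$ and the constant $C$ furnished by that result depend only on $n,\lambda,\Lambda,\alpha$, which matches the universal dependence required in the statement. Because the notion of viscosity solution used in this paper coincides with the classical Crandall-Ishii-Lions one (Remark~\ref{rem:BD-classical}), no translation of definitions is needed.

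The pointwise bound \eqref{eq:grad0} in the nonnegative case is obtained by combining the interior Harnack inequality (Theorem \ref{Thm:HarnackBD}) with the $C^{1,\gamma}$ estimate of the first step. Since $F$ is $(\lambda,\Lambda)$-elliptic with $F(0)=0$, the identity \eqref{eq:Pucci-ellipticity} yields
\[
|\nabla u|^{\alpha}\mathcal M^-_{\lambda,\Lambda}(D^2u)\le \|f\|_{L^\infty(B_1)},\qquad
|\nabla u|^{\alpha}\mathcal M^+_{\lambda,\Lambda}(D^2u)\ge -\|f\|_{L^\infty(B_1)}
\]
in $B_1$ in the viscosity sense. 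Theorem \ref{Thm:HarnackBD} therefore gives $\sup_{B_{1/2}}u\le C_H\bigl(\inf_{B_{1/2}}u+\|f\|_{L^\infty(B_1)}^{1/(1+\alpha)}\bigr)$, and since $0\in B_{1/2}$ and $u\ge 0$, one has $\inf_{B_{1/2}}u\le u(0)$. Next, applying the $C^{1,\gamma}$ estimate to the rescaled function $v(x)=2u(x/2)$ in $B_1$ (Remark \ref{rem:rescaling} with $b=1/2$, $a=1/b=2$, under which $(\lambda,\Lambda)$-ellipticity is preserved) and transferring the estimate back to $u$, one obtains $|\nabla u(0)|\le C\bigl(\|u\|_{L^\infty(B_{1/2})}+\|f\|_{L^\infty(B_1)}^{1/(1+\alpha)}\bigr)$. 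Chaining the two inequalities gives \eqref{eq:grad0}.

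For the boundary bound \eqref{eq:grad0-bdry}, Harnack is replaced by the supremum version of the quantitative Hopf-Oleinik lemma. The two-sided Pucci bracket \eqref{eq:HOL-3} is exactly what $(\lambda,\Lambda)$-ellipticity of $F$ provides once the equation is multiplied by $|\nabla u|^{\alpha}$, so Theorem \ref{HOL-quantitative}(C) together with the conclusion analogous to \eqref{eq:HOL-2} yields
\[
\sup_{B_{1/2}}u\;\le\; C\bigl(\partial_\nu u(x_0)+\|f\|_{L^\infty(B_1)}^{1/(1+\alpha)}\bigr).
\]
Substituting this into the rescaled interior $C^{1,\gamma}$ estimate at $0$ from the previous step produces \eqref{eq:grad0-bdry}.

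The only real obstacle is a bookkeeping one: the Imbert-Silvestre estimate is naturally phrased on $B_1$ with $\|u\|_{L^\infty(B_1)}$ on the right-hand side, whereas the Harnack and Hopf inputs only control $\sup_{B_{1/2}}u$. This mismatch is handled cleanly, but must be handled, via the ellipticity-preserving rescaling of Remark \ref{rem:rescaling}, which moves the $C^{1,\gamma}$ estimate from the unit ball to $B_{1/2}$ with the structural constants unchanged. Once this is in place, the three estimates assemble in the order indicated.
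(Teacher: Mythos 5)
Your proposal follows essentially the same route as the paper: Imbert--Silvestre's interior estimate for \eqref{eq:C1gamma}, the Harnack inequality of Theorem~\ref{Thm:HarnackBD} combined with a rescaled $C^{1,\gamma}$ bound at the origin for \eqref{eq:grad0}, and Theorem~\ref{HOL-quantitative}(B)--(C) substituted into \eqref{eq:grad0} for \eqref{eq:grad0-bdry}. The only difference is that you take the Imbert--Silvestre result as directly yielding the classical $C^{1,\gamma}(B_{1/2})$ bound, whereas the paper records it as a Campanato-type seminorm bound $[u]_{1+\gamma,B_{3/4}}$ and passes to classical $C^{1,\gamma}$ via Proposition~\ref{prop:campanato-uniform-taylor} and Theorem~\ref{thm:UTE-C1omega}; this is a bookkeeping refinement, not a change of argument.
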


\begin{proof}
	Changing appropriately the constants in \cite[Theorem~1]{Imbert-Silvestre2013}, there exist $\gamma\in(0,1]$
	and a constant $C$, depending only on $n,\lambda,\Lambda$ and $\alpha$, such that
	\[
	[u]_{1+\gamma,B_{3/4}}\ \le\ C\Big(\|u\|_{L^\infty(B_1)}+\|f\|_{L^\infty(B_1)}^{\frac{1}{1+\alpha}}\Big).
	\]
Then, by Proposition~\ref{prop:campanato-uniform-taylor}, for every $x_0\in B_{1/2}$, there exists an affine map
	$\ell_{x_0}(x)=u(x_0)+p(x_0)\cdot(x-x_0)$ such that
	\[
	|u(x)-\ell_{x_0}(x)| \le T\,|x-x_0|^{1+\gamma}\qquad\text{for all }x\in B_{1/8}(x_0),
	\]
	where  $T:= C(\|u\|_{L^\infty(B_1)}+\|f\|_{L^\infty(B_1)}^{\frac{1}{1+\alpha}})$, for some constant $C=C(n,\lambda,\Lambda,\alpha)$. By  Theorem~\ref{thm:UTE-C1omega}, in the power–modulus case \(\omega(t)=t^\gamma\)
	(see also  Remark~\ref{power-modulus}), up to renaming constants, we conclude that
	\[
	\|u\|_{C^{1,\gamma}(B_{1/2})}
	\le C\Big(\|u\|_{L^\infty(B_1)}+\|f\|_{L^\infty(B_1)}^{\frac{1}{1+\alpha}}\Big),
	\]
    for some constant $C>0$ depending only on $n,\lambda, \Lambda$ and $\alpha$, which proves \eqref{eq:C1gamma}.

	Assume now that \(u\ge0\) in \(B_1\). By applying \eqref{eq:C1gamma} in \(B_{1/4}\), we get 
	\begin{equation}\label{useC1,a}
	|\nabla u(0)| \le C\Big(\sup_{B_{1/2}}u+\|f\|_{L^\infty(B_1)}^{\frac{1}{1+\alpha}}\Big).
	\end{equation}
	Moreover, the Harnack inequality \eqref{eq:OurHarnack-final} gives
    \begin{equation}\label{useHar}
	\sup_{B_{1/2}} u \le C\Big(u(0)+\|f\|_{L^\infty(B_1)}^{\frac{1}{1+\alpha}}\Big),
	\end{equation}
	Now, combing \eqref{useC1,a} and \eqref{useHar}, we obtain
	\[
	|\nabla u(0)|\ \le\ C\Big(u(0)+\|f\|_{L^\infty(B_1)}^{\frac{1}{1+\alpha}}\Big),
	\]
	which proves \eqref{eq:grad0}.
	
	For the boundary statement, let \(0\le u\in C(\overline{B_1})\), assume \(u(x_0)=0\) for some
	\(x_0\in\partial B_1\), and that the inward normal derivative \(\partial_\nu u(x_0)\) exists.
	Since $F$ is uniformly elliptic and $F(0)=0$, we can apply point (B) and (C) of Theorem~\ref{HOL-quantitative} which ensure that 
	\begin{equation}\label{useHOL}
	u(0)\ \le\ \sup_{B_{1/2}} u\ \le\ C\Big(\partial_\nu u(x_0)+\|f\|_{L^\infty(B_1)}^{\frac{1}{1+\alpha}}\Big).
	\end{equation}
	Finally, by substituting \eqref{useHOL} into \eqref{eq:grad0}, we get
	\[
	|\nabla u(0)|\ \le\ C\Big(\partial_\nu u(x_0)+\|f\|_{L^\infty(B_1)}^{\frac{1}{1+\alpha}}\Big),
	\]
	which is \eqref{eq:grad0-bdry}.
\end{proof}


%

\section{Gluing Sobolev functions across a rough interface}\label{s:glue-SVA}

In this section, we establish a gluing result for functions that vanish identically on a rough interface. Such a result plays a key role in applications to free boundary problems.
For this purpose, we require the functions to be continuous and to vanish identically along the interface. However, no assumption is needed regarding the regularity of the interface.

\begin{proposition}\label{thm:glue-SVA}
	Let $1\le p\le\infty$.  
	Let $U\subset\mathbb{R}^n$ be open, $V\subset U$ open, and let $A,B\subset U$ be disjoint open sets with an interface $\Gamma\subset U$ such that
	\[
	U\setminus\Gamma=A\cup B,\qquad
	\partial A\cap U\subset\Gamma,\qquad
	\partial B\cap U\subset\Gamma.
	\]
	Assume
	\begin{equation}\label{ass:SVA}
		u\in W^{1,p}(A)\cap C(\overline A\cap U),\qquad
		v\in W^{1,p}(B)\cap C(\overline B\cap U),\qquad
		u|_\Gamma=v|_\Gamma=0.
	\end{equation}
	Define $w:V\to\mathbb{R}$ by
	\[
	w=
	\begin{cases}
		u &\text{on }A\cap V,\\
		v &\text{on }B\cap V,\\
		0 &\text{on }\Gamma\cap V.
	\end{cases}
	\]
	Then $w\in W^{1,p}(V)\cap C(V)$,
	\[
	\nabla w=\chi_A\nabla u+\chi_B\nabla v\quad\text{a.e. in }V,
	\]
	and
	\[
	\|w\|_{L^p(V)}\le \|u\|_{L^p(A)}+\|v\|_{L^p(B)},\qquad
	\|\nabla w\|_{L^p(V)}\le \|\nabla u\|_{L^p(A)}+\|\nabla v\|_{L^p(B)},
	\]
	with the natural interpretation when $p=\infty$.
\end{proposition}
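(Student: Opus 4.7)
The plan is a truncation–extension argument that exploits the vanishing of $u$ and $v$ on $\Gamma$ together with their continuity up to $\Gamma$; this substitutes for the absent regularity of $\Gamma$ itself, which is the main obstacle. A direct integration by parts on $A$ and $B$ would require a trace identification on $\Gamma$, unavailable without further structure on the interface. The truncation instead creates a genuine open gap of vanishing around $\Gamma$ and reduces the gluing to a purely local matter.

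For $\delta>0$, set $\tau_\delta(t):=\sgn(t)(|t|-\delta)_+$ and define $w_\delta:V\to\R$ by $w_\delta=\tau_\delta(u)$ on $A\cap V$, $w_\delta=\tau_\delta(v)$ on $B\cap V$, and $w_\delta=0$ on $\Gamma\cap V$. For each $x_0\in\Gamma\cap V$, the continuity of $u$ on $\overline A\cap U$ together with $u(x_0)=0$ produces a ball $B_r(x_0)\subset V$ with $|u|<\delta$ on $\overline A\cap B_r(x_0)$, and the analogous property holds for $v$; hence $w_\delta\equiv 0$ on $B_r(x_0)$. Consequently, about every $x_0\in V$ there is a ball $B_r(x_0)\subset V$ on which exactly one of three situations occurs: $w_\delta=\tau_\delta(u)$ (when $x_0\in A$), $w_\delta=\tau_\delta(v)$ (when $x_0\in B$), or $w_\delta\equiv 0$ (when $x_0\in\Gamma$). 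The Sobolev chain rule in each case identifies a local weak gradient, and a standard partition–of–unity patching yields $w_\delta\in W^{1,p}_{\loc}(V)$ with $\nabla w_\delta=g_\delta:=\chi_A\,\chi_{\{|u|>\delta\}}\nabla u+\chi_B\,\chi_{\{|v|>\delta\}}\nabla v$, extended by $0$ on $\Gamma\cap V$. The bounds $\|w_\delta\|_{L^p(V)}\le\|u\|_{L^p(A)}+\|v\|_{L^p(B)}$ and $\|g_\delta\|_{L^p(V)}\le\|\nabla u\|_{L^p(A)}+\|\nabla v\|_{L^p(B)}$ then upgrade this to $w_\delta\in W^{1,p}(V)$.

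Finally I pass to the limit $\delta\to 0$. Pointwise $w_\delta\to w$ with $|w_\delta|\le|w|\in L^p(V)$, so dominated convergence gives $w_\delta\to w$ in $L^p(V)$. For the gradients, the classical identity $\nabla u=0$ a.e.\ on $\{u=0\}$ (and analogously for $v$) yields $g_\delta\to g:=\chi_A\nabla u+\chi_B\nabla v$ a.e., dominated by $|\nabla u|\chi_A+|\nabla v|\chi_B\in L^p(V)$, hence $g_\delta\to g$ in $L^p(V)$. Since the weak gradient is closed under $L^p$ convergence, $w\in W^{1,p}(V)$ with $\nabla w=g$, and the claimed norm inequalities pass to the limit. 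Continuity of $w$ on $V$ is immediate on $A\cap V$ and $B\cap V$; at $x_0\in\Gamma\cap V$, the continuity of $u$ on $\overline A\cap U$ with $u|_\Gamma=0$ and the analogous property for $v$ force $w(x_n)\to 0=w(x_0)$ for every sequence $x_n\to x_0$ in $V$. The case $p=\infty$ is handled analogously, replacing $L^p$ dominated convergence by uniform boundedness of $\{w_\delta,\nabla w_\delta\}$ together with pointwise convergence.
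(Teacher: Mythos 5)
Your proof is correct, and it shares the paper's central device: truncating $u$ and $v$ at level $\delta$ (the paper's $\Phi_\eta$ is exactly your $\tau_\delta$) so that the glued truncation vanishes near $\Gamma$, and then removing the truncation by dominated convergence for $p<\infty$ (both arguments rely on $\nabla u=0$ a.e.\ on $\{u=0\}$, which you state explicitly while the paper uses it implicitly in the bound $\int_A|\nabla u|^p\chi_{\{|u|\le\eta\}}\to0$) and by uniform bounds plus pointwise convergence for $p=\infty$, where the paper invokes weak-$*$ compactness instead. Where you genuinely diverge is in how the truncated glue is shown to be Sobolev: the paper first restricts to $V\Subset U$ in order to exploit uniform continuity on a compact neighbourhood of $\Gamma$, then tests with $\psi_s\varphi\chi_A$ and $\psi_s\varphi\chi_B$, where $\psi_s$ is a cutoff built from $\dist(\cdot,\Gamma)$, passes to the limit $s\to0$, and finally removes the restriction $V\Subset U$ by an exhaustion argument (Step 3). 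You instead observe that, by pointwise continuity of $u$ on $\overline A\cap U$ and of $v$ on $\overline B\cap U$ together with the vanishing on $\Gamma$, the truncation $w_\delta$ is identically zero on an open neighbourhood of $\Gamma\cap V$, so weak differentiability becomes a purely local matter settled by the chain rule on $A$, on $B$, and trivially near $\Gamma$, patched by a partition of unity directly on an arbitrary open $V$. This buys you the elimination of both the auxiliary cutoff (with its extra limit in $s$) and the exhaustion step, at no loss of generality; the paper's route quantifies the vanishing neighbourhood uniformly (the set $U_\eta$), which your local argument shows is not needed. One cosmetic point: when you write ``$u(x_0)=0$ and continuity of $u$ at $x_0\in\Gamma\cap V$'', this is meaningful only when $x_0\in\overline A$ (otherwise a small ball around $x_0$ misses $A$ entirely and the claim $|u|<\delta$ on $\overline A\cap B_r(x_0)$ holds vacuously), and likewise for $v$; spelling this out would make the local step airtight but does not affect the argument.
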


\begin{proof}
	We split the proof into three steps.
    
\smallskip
\noindent\textbf{Step 1: case $V\Subset U$.}
	Let $K$ be a compact set such that  $\overline V\subset K\Subset U$.  
	By \eqref{ass:SVA} and the uniform continuity of $u$ and $v$ on $K\cap\overline A$ and $K\cap\overline B$,  
	for every $\eta>0$ there exists $\delta_\eta\in(0,1]$ such that
	\[
	|u|<\eta\ \text{on }U_\eta\cap A,\qquad |v|<\eta\ \text{on }U_\eta\cap B,
	\]
	where \(U_\eta:=\{x\in K:\operatorname{dist}(x,\Gamma)<\delta_\eta\}\).
	Let us consider the piecewise linear truncation given by
	\[
	\Phi_\eta(t):=
	\begin{cases}
		t-\eta,& t>\eta,\\[2pt]
		0,& |t|\le \eta,\\[2pt]
		t+\eta,& t<-\eta.
	\end{cases}
	\]
	Then $\Phi_\eta$ is $1$–Lipschitz on $\R$ and belongs to $W^{1,\infty}(\mathbb R)$, with
	\begin{equation}\label{der-phi}
	\Phi_\eta'(t)=\chi_{\{|t|>\eta\}}\quad\text{a.e.}
	\end{equation}
	Let us define  $u_\eta:=\Phi_\eta(u)$ on $A$ and $v_\eta:=\Phi_\eta(v)$ on $B$.  
	By the Sobolev chain rule, we know that $ u_\eta\in W^{1,p}(A)$ and $v_\eta\in W^{1,p}(B)$, with
    \[\
	\nabla u_\eta=\Phi_\eta'(u)\,\nabla u\qquad \text{and}\qquad \nabla v_\eta=\Phi_\eta'(v)\,\nabla v\ \ \text{a.e.}
	\]
	Since $|\Phi_\eta(t)|\le|t|$ and $|\Phi_\eta'(t)|\le1$, we have also
	\begin{equation}\label{dis-norme}
\|u_\eta\|_{L^p(A)}\le \|u\|_{L^p(A)},\qquad
	\|v_\eta\|_{L^p(B)}\le \|v\|_{L^p(B)}\
	\end{equation}
and
\begin{equation}\label{dis-norme-grad}
\|\nabla u_\eta\|_{L^p(A)}\le \|\nabla u\|_{L^p(A)}\qquad \|\nabla v_\eta\|_{L^p(B)}\le \|\nabla v\|_{L^p(B)}.
\end{equation}
Moreover, by definition, $u_\eta=v_\eta=0$ on $U_\eta$.

	Now, setting $d(x):=\operatorname{dist}(x,\Gamma)$, for $s\in(0,\delta_\eta)$, let us define
	\[
	\zeta_s(t):=
	\begin{cases}
		0,& t\le \tfrac{s}{2},\\[2pt]
		\dfrac{2}{s}\,t-1,& \tfrac{s}{2}< t< s,\\[2pt]
		1,& t\ge s.
	\end{cases}
	\]
	Then $\zeta_s\in W^{1,\infty}(\mathbb R)$ with
	\[
	(\zeta_s)'(t)=\tfrac{2}{s}\,\chi_{(s/2,s)}(t)\quad\text{a.e.}\qquad \text{and}\qquad \|(\zeta_s)'\|_{L^\infty([0,s])}=\tfrac{2}{s}.
	\]
	  Let us define $\psi_s(x):=\zeta_s(d(x))$. Since $d$ is $1$–Lipschitz, we have
	\[
	\nabla\psi_s(x)=\tfrac{2}{s}\,\chi_{\{s/2<d(x)<s\}}\,\nabla d(x)
	\quad\text{a.e.}
	\]
	Thus $|\nabla\psi_s|\le 2 s^{-1}\chi_{L_s}$ with $L_s:=\{x\in V:s/2<d(x)<s\}\subset U_\eta$ and
	\[
	\psi_s=0\ \text{on }\{d\le s/2\}\cap V,\qquad
	\psi_s=1\ \text{on }\{d\ge s\}\cap V.
	\]
	
	Finally, we define
	\[
	w_\eta(x):=
	\begin{cases}
		u_\eta(x),&x\in A,\\
		v_\eta(x),&x\in B,\\
		0,&x\in \Gamma.
	\end{cases}
	\]
	 and, for $i\in\{1,\dots,n\}$ and $\varphi\in C_c^\infty(V)$, we set
	\[
	K_s:=\overline V\cap\{d\ge s/2\}\cap\supp\varphi,\qquad
	A_s:=A\cap K_s,\ \ B_s:=B\cap K_s.
	\]
	Since  $\operatorname{dist}(K_s,\Gamma)\ge s/2$ and $K_s\Subset U$, there exists
	\(\rho>0\) (e.g. $\rho:=\min\{\tfrac{s}{4},\tfrac12\operatorname{dist}(K_s,\partial U)\}\))
	such that the $\rho$–neighbourhood of $K_s$ lies in $U\setminus\Gamma$.  
	Furthermore, since $A,B$ are open and cover $U\setminus\Gamma$, we get $\overline{A_s}\Subset A$ and $\overline{B_s}\Subset B$. In particular, we have
	\[
	\psi_s\varphi\chi_A\in W^{1,\infty}_0(A),\qquad
	\psi_s\varphi\chi_B\in W^{1,\infty}_0(B).
	\]
	Now, by testing $u_\eta$ on $A$ and $v_\eta$ on $B$ with these functions, we obatian
	\begin{equation}\label{integral-1}
	\int_{A} u_\eta\,\partial_i(\psi_s\varphi)
	= -\int_{A} \partial_i u_\eta\,\psi_s\varphi,\qquad
	\int_{B} v_\eta\,\partial_i(\psi_s\varphi)
	= -\int_{B} \partial_i v_\eta\,\psi_s\varphi.
	\end{equation}
	On the other hand, since $\supp\nabla\psi_s\subset L_s\subset U_\eta$ and  $u_\eta=v_\eta=0$ on $U_\eta$, we have
	\[
	\int_A u_\eta\,\varphi\,\partial_i\psi_s=0,\qquad
	\int_B v_\eta\,\varphi\,\partial_i\psi_s=0.
	\]
	Hence, expanding $\partial_i(\psi_s\varphi)$ and using \eqref{integral-1}, we conclude 
	\[
	\int_A u_\eta\,\psi_s\,\partial_i\varphi
	= -\int_A \partial_i u_\eta\,\psi_s\varphi,\qquad
	\int_B v_\eta\,\psi_s\,\partial_i\varphi
	= -\int_B \partial_i v_\eta\,\psi_s\varphi.
	\]
	Now, summing, using $w_\eta=u_\eta$ on $A$ and $w_\eta=v_\eta$ on $B$ and recalling that $\varphi\in C_c^\infty(V)$, we get
	\[
	\int_V w_\eta\,\psi_s\,\partial_i\varphi
	= -\int_V \bigl(\chi_A\partial_i u_\eta+\chi_B\partial_i v_\eta\bigr)\,\psi_s\,\varphi.
	\]
	Then, letting $s\to 0$, by the Dominated Convergence Theorem, we conclude
	\begin{equation}\label{final-weta}
	\int_V w_\eta\,\partial_i\varphi
	= -\int_V \bigl(\chi_A\partial_i u_\eta+\chi_B\partial_i v_\eta\bigr)\,\varphi,
	\end{equation}
	which shows $\nabla w_\eta=\chi_A\nabla u_\eta+\chi_B\nabla v_\eta$ a.e. in $V$.

    \smallskip
    \noindent\textbf{Step 2: Estimates and limit passage.}
	By definition and Step 1, we have
	\[
	w_\eta=u_\eta\chi_A+v_\eta\chi_B,\qquad
	\nabla w_\eta=\chi_A\nabla u_\eta+\chi_B\nabla v_\eta\ \ \text{a.e. in }V.
	\]
	Thus, recalling \eqref{dis-norme} and \eqref{dis-norme-grad}, we have
	\begin{equation}\label{bound-w-1}
	\|w_\eta\|_{L^p(V)}
	\le \|u_\eta\|_{L^p(A)}+\|v_\eta\|_{L^p(B)}
	\le \|u\|_{L^p(A)}+\|v\|_{L^p(B)}
	\end{equation}
    and
	\begin{equation}\label{bound-w-2}
	\|\nabla w_\eta\|_{L^p(V)}
	\le \|\nabla u_\eta\|_{L^p(A)}+\|\nabla v_\eta\|_{L^p(B)}
	\le \|\nabla u\|_{L^p(A)}+\|\nabla v\|_{L^p(B)}.
	\end{equation}
	For $1\le p<\infty$, since $\Phi_\eta(t)\to t$ as $\eta\to 0$ and $|\Phi_\eta(t)-t|\le2|t|$, by the Dominated Convergence Theorem, we conclude
	\[
	\|u_\eta-u\|_{L^p(A)}^p\to0,\qquad
	\|v_\eta-v\|_{L^p(B)}^p\to0,
	\]
    as $\eta\to 0$. So setting $w := u\chi_A + v\chi_B$, we have
	\[
	\|w_\eta-w\|_{L^p(V)}
	\le \|u_\eta-u\|_{L^p(A)}+\|v_\eta-v\|_{L^p(B)}\to0,\quad \text{as }\ \eta \to0.
	\]
	Moreover, recalling \eqref{der-phi}, we also have
	\[
	\|\nabla u_\eta-\nabla u\|_{L^p(A)}^p
	\le 2^p\!\int_A |\nabla u|^p\,\chi_{\{|u|\le\eta\}} \to 0,\qquad \|\nabla v_\eta-\nabla v\|_{L^p(B)}^p
	\le 2^p\!\int_B |\nabla v |^p\,\chi_{\{|u|\le\eta\}} \to 0,	\]
	and similarly for $v_\eta$. Hence
	\[
	\nabla w_\eta\to \chi_A\nabla u+\chi_B\nabla v\quad\text{in }L^p(V).
	\]
	When $p=\infty$, the uniform bounds in \eqref{bound-w-1} and \eqref{bound-w-2} ensure
	\[
	\sup_\eta\|w_\eta\|_{W^{1,\infty}(V)}<\infty.
	\]
	Then by Banach–Alaoglu Theorem, there exist $w^{(\infty)}\in L^\infty(V)$ and $G^{(\infty)}\in L^\infty(V;\mathbb R^n)$ such that
	\[
	w_\eta \stackrel{*}{\rightharpoonup} w^{(\infty)}\ \text{in }L^\infty(V),\qquad
	\nabla w_\eta \stackrel{*}{\rightharpoonup} G^{(\infty)}\ \text{in }L^\infty(V;\mathbb R^n).
	\]
	By \eqref{final-weta}, we have
\[
\int_V w_\eta\,\partial_i\varphi
= -\int_V \bigl(\chi_A\,\Phi_\eta'(u)\,\partial_i u
+\chi_B\,\Phi_\eta'(v)\,\partial_i v\bigr)\,\varphi.
\]
Since $\Phi_\eta'(\cdot)\to1$ a.e. and $0\le\Phi_\eta'\le1$, passing to the limit yields
\[
\int_V w^{(\infty)}\,\partial_i\varphi
= -\int_V (\chi_A\,\partial_i u+\chi_B\,\partial_i v)\,\varphi.
\]
Hence $w^{(\infty)}=w$ and $G^{(\infty)}=\chi_A\nabla u+\chi_B\nabla v$. In particular,
\[
w_\eta \stackrel{*}{\rightharpoonup} w
\quad\text{and}\quad
\nabla w_\eta \stackrel{*}{\rightharpoonup} \chi_A\nabla u + \chi_B\nabla v
\qquad\text{in }L^\infty(V).
\]

    \smallskip
	\noindent\textbf{Step 3: general $V\subset U$.}
	Let $\varphi\in C_c^\infty(V)$ and set $K:=\supp\varphi$.  
	Since $K\subset V\subset U$ is compact, there exists an open set $V_\varphi$ such that
	\[
	K \subset V_\varphi \Subset U\qquad\text{and}\qquad V_\varphi\subset V.
	\]
	By Step 1 applied to $V_\varphi\Subset U $, we have that for every $i\in\{1,\dots,n\}$,
	\begin{equation}\label{int-V-phi}
	\int_{V_\varphi} w\,\partial_i\varphi
	= -\int_{V_\varphi} \bigl(\chi_A\,\partial_i u+\chi_B\,\partial_i v\bigr)\,\varphi .
	\end{equation}
	However, since $\varphi$ vanishes outside $K\subset V_\varphi$, the same equality in \eqref{int-V-phi} holds with $V$ in place of $V_\varphi$, namely
	\[
	\int_{V} w\,\partial_i\varphi
	= -\int_{V} \bigl(\chi_A\,\partial_i u+\chi_B\,\partial_i v\bigr)\,\varphi .
	\]
	Since $\varphi\in C_c^\infty(V)$ is arbitrary, we conclude 
	\[
	w\in W^{1,p}(V)\qquad \text{and}\qquad
	\nabla w=\chi_A\nabla u+\chi_B\nabla v\quad\text{a.e. in }V.
	\]
	To obtain global $L^p$ estimates, let us consider an increasing exhaustion of $V$, i.e. a sequence of open sets $\{V_k\}_{k\in\mathbb{N}}$, such that  $V_k\Subset U$, $V_k\subset V_{k+1}$, for any $n\in \N $ and $\bigcup_{k \in  \N }V_k=V$.  
	By applying estimates \eqref{dis-norme} and \eqref{dis-norme-grad} on  $V_k$ and letting $k\to\infty$, we get
	\[
	\|w\|_{L^p(V)}\le \|u\|_{L^p(A)}+\|v\|_{L^p(B)},\qquad
	\|\nabla w\|_{L^p(V)}\le \|\nabla u\|_{L^p(A)}+\|\nabla v\|_{L^p(B)}
	\]
	for every $1\le p\le\infty$.
	Finally, since $u$ and $v$ are continuous up to $\Gamma$ and vanish there,
	$w=u\chi_A+v\chi_B$ is continuous in $V$.
\end{proof}

As a direct application of Proposition~\ref{thm:glue-SVA}, we obtain the following result concerning the positive part of a Sobolev function.
 
\begin{corollary}[Regularity of Positive part on subdomains via gluing]\label{thm:positive-part-generic}
	\noindent Let $1\le p\le\infty$. Let $U\subset\mathbb{R}^n$ be open and let $V\subset U$ be open. Let $u\in C(U)$ and set
	\[
	A_U:=\{x\in U:\ u(x)>0\},\qquad A_V:=A_U\cap V.
	\]
	Assume $u\in W^{1,p}(A_U)$. Then the positive part $u^{+}:=\max\{u,0\}$ belongs to $W^{1,p}(V)$ and
	\begin{equation}\label{eq:pos-Lp-generic}
		\|u^{+}\|_{L^{p}(V)}\le \|u\|_{L^{p}(A_V)},\qquad
		\|\nabla u^{+}\|_{L^{p}(V)}\le \|\nabla u\|_{L^{p}(A_V)}.
	\end{equation}
	Moreover,
	\[
	\nabla u^+=\chi_{A_U}\,\nabla u\quad\text{a.e. in }V,
	\]
	and, for $1\le p<\infty$,
	\[
	\|u^{+}\|_{L^{p}(V)}=\|u\|_{L^{p}(A_V)},\qquad
	\|\nabla u^{+}\|_{L^{p}(V)}=\|\nabla u\|_{L^{p}(A_V)}.
	\]
\end{corollary}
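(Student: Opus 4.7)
The plan is to deduce this corollary as a direct specialization of Proposition \ref{thm:glue-SVA}, choosing the gluing data so that the glued function coincides with $u^{+}$ on $V$. The main work is the topological setup; once that is in place the conclusions follow by reading off the statement of the proposition, and the equalities are recovered from the pointwise identity $u^{+}=u\,\chi_{A_U}$.

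First, I would define
\[
A := A_U,\qquad B := U\setminus\overline{A_U},\qquad \Gamma := \partial A_U \cap U,
\]
and verify the geometric hypotheses of Proposition \ref{thm:glue-SVA}. Continuity of $u$ makes $A_U$ open, $B$ is open by construction, and $A\cap B=\emptyset$; by definition $U\setminus\Gamma=A\cup B$. The inclusion $\partial A\cap U\subset\Gamma$ is immediate. For $\partial B\cap U\subset\Gamma$, any $x\in\partial B\cap U$ lies in $\overline{A_U}$ (since $x\notin B$ and $B$ is open), yet cannot lie in the open set $A_U$ because each point of $A_U$ has a neighborhood disjoint from $B=U\setminus\overline{A_U}$; hence $x\in\partial A_U\cap U=\Gamma$. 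Continuity of $u$ forces $u\equiv 0$ on $\Gamma$: approximation from $A_U$ gives $u\ge 0$ there, while $\Gamma\cap A_U=\emptyset$ gives $u\le 0$.

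Next, I would take the gluing pair $(u,v)$ with $u\in W^{1,p}(A_U)\cap C(\overline{A_U}\cap U)$ as given and $v\equiv 0$ on $B$. Both functions vanish on $\Gamma$, so all hypotheses of Proposition \ref{thm:glue-SVA} are satisfied on $V$ and produce $w\in W^{1,p}(V)\cap C(V)$ with
\[
w=u\,\chi_{A\cap V}+0\cdot\chi_{B\cap V}+0\cdot\chi_{\Gamma\cap V}\quad\text{in } V,
\]
and
\[
\nabla w=\chi_A\nabla u+\chi_B\nabla v=\chi_{A_U}\nabla u\quad\text{a.e. in }V.
\]
Since $u^{+}=u$ on $\{u>0\}=A_U$ and $u^{+}\equiv 0$ on $U\setminus A_U\supset (B\cup\Gamma)\cap V$, we conclude that $w=u^{+}$ on $V$, which proves both the Sobolev regularity of $u^{+}$ on $V$ and the claimed gradient identity. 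The inequalities in \eqref{eq:pos-Lp-generic} are then a consequence of the pointwise relations $|u^{+}|=|u|\chi_{A_V}$ and $|\nabla u^{+}|=|\nabla u|\chi_{A_V}$ a.e. in $V$.

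Finally, for $1\le p<\infty$ these same pointwise identities integrate to the claimed equalities
\[
\|u^{+}\|_{L^{p}(V)}^{p}=\int_{A_V}|u|^{p}\,dx,\qquad \|\nabla u^{+}\|_{L^{p}(V)}^{p}=\int_{A_V}|\nabla u|^{p}\,dx.
\]
The only nontrivial step is the topological verification that the triple $(A,B,\Gamma)$ meets the interface hypotheses of the gluing proposition; the remainder of the argument is essentially a rewriting of that proposition applied to the trivial choice $v\equiv 0$.
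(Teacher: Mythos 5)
Your proof is correct and follows essentially the same route as the paper: both specialize Proposition~\ref{thm:glue-SVA} with the trivial second function $v\equiv 0$ and identify the glued function with $u^{+}$, then read off the norm identities from $u^{+}=u\,\chi_{A_U}$. The only (immaterial) difference is your choice of decomposition, $B=U\setminus\overline{A_U}$ and $\Gamma=\partial A_U\cap U$, versus the paper's $B=\{u<0\}\cap U$ and $\Gamma=\{u=0\}\cap U$; your topological verification of the interface hypotheses and of $u\equiv 0$ on $\Gamma$ is sound, so both setups satisfy the proposition's assumptions.
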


\begin{proof}
	\noindent Let us fix the sets
	\[
	A:=\{u>0\}\cap U,\qquad B:=\{u<0\}\cap U,\qquad \Gamma:=\{u=0\}\cap U.
	\]
	By continuity, $u=0$ on $\Gamma$, and by assumptions $u\in W^{1,p}(A)$. Now, let us set $v\equiv 0$ on $B$, so $v\in W^{1,p}(B)$ and $v=0$ on $\Gamma$. Then the sets $A$ and $B$ are disjoint and open and they cover $U\setminus\Gamma$. Moreover, $\partial A\cap U \subset\Gamma$ and $\partial B\cap U \subset\Gamma$. Indeed, if $x\in \partial A\cap U$, by continuity we have $u(x)\geq 0$. However, if $u(x)>0$, then by definition $x$ would belong to the interior of $A$. Therefore, necessarily $u(x)=0$, i.e.\ $x\in\Gamma$. The same argument applies if $x\in \partial B\cap U$. Hence, all the structural assumptions of Theorem~\ref{thm:glue-SVA} are satisfied for the pairs $(A,u)$ and $(B,v)$ on $V\Subset U$. Consequently, there exists a function $w\in W^{1,p}(V)\cap C(V)$ such that
\[
w=u \quad\text{on }A\cap V,\qquad 
w=0 \quad\text{on }(B\cup\Gamma)\cap V,\qquad 
\nabla w=\chi_{A}\,\nabla u \quad\text{a.e.\ in }V.
\]
Since $u\le 0$ on $B$ and $u=0$ on $\Gamma$, we deduce that $w=u^{+}$ in $V$. Moreover, since $u^{+}=u$ in $A_V:=A\cap V$ and $u^{+}=0$ in $V\setminus A_V$, it follows that $\nabla u^{+}=\chi_{A}\,\nabla u$ a.e.\ in $V$. Consequently,
\[
\|u^{+}\|_{L^{p}(V)}^{p}=\int_{A_V}|u|^{p},\qquad 
\|\nabla u^{+}\|_{L^{p}(V)}^{p}=\int_{A_V}|\nabla u|^{p},
\]
for all $1\le p<\infty$, while for $p=\infty$ one has
\[
\|u^{+}\|_{L^{\infty}(V)}\leq\|u\|_{L^{\infty}(A_V)},\qquad 
\|\nabla u^{+}\|_{L^{\infty}(V)}\le \|\nabla u\|_{L^{\infty}(A_V)}.
\]
These relations imply \eqref{eq:pos-Lp-generic}, with equalities when $1\le p<\infty$. This concludes the proof.

\end{proof}
\begin{remark}[Positive part on balls and half-balls]\label{regularity-positive-part-balls}
	\noindent Let $1\le p\le\infty$. Let $U\subset\mathbb{R}^n$ be open, $V\subset U$ open and $u\in C(U)$. Setting $A_U:=\{u>0\}\cap U$, we assume $u\in W^{1,p}(A_U)$.
	
	\noindent\textbf{(i) Ball.} If $U=B_1$ and $V=B_{1/2}$, 
    then
	\[
	u^+\in W^{1,p}(B_{1/2}),\qquad \nabla u^+=\chi_{\{u>0\}\cap B_1}\,\nabla u\ \text{ a.e. in }B_{1/2},
	\]
	and
	\[
	\|u^{+}\|_{L^{p}(B_{1/2})}\le \|u\|_{L^{p}(\{u>0\}\cap B_{1/2})},\qquad
	\|\nabla u^{+}\|_{L^{p}(B_{1/2})}\le \|\nabla u\|_{L^{p}(\{u>0\}\cap B_{1/2})},
	\]
	with equalities when $1\le p<\infty$. 
	
	\noindent\textbf{(ii) Half-ball.} If $U=B_1^{+}:=B_1\cap\{x_n>0\}$ and $V=B_{1/2}^{+}:=B_{1/2}\cap\{x_n>0\}$, 
    then
	\[
	u^+\in W^{1,p}(B_{1/2}^{+}),\qquad \nabla u^+=\chi_{\{u>0\}\cap B_1^{+}}\,\nabla u\ \text{ a.e. in }B_{1/2}^{+},
	\]
	and
	\[
	\|u^{+}\|_{L^{p}(B_{1/2}^{+})}\le \|u\|_{L^{p}(\{u>0\}\cap B_{1/2}^{+})},\qquad
	\|\nabla u^{+}\|_{L^{p}(B_{1/2}^{+})}\le \|\nabla u\|_{L^{p}(\{u>0\}\cap B_{1/2}^{+})},
	\]
	with equalities when $1\le p<\infty$. 
\end{remark}

\section{Lipschitz regularity of the one-phase Bernoulli problem}\label{s:10}

In this section, we present the proof of Theorem~\ref{thm:Lip-FBP}, which establishes the Lipschitz continuity of viscosity solutions to the one-phase Bernoulli free boundary problem~\eqref{eq:FBP1}. We begin by clarifying the precise meaning of a solution satisfying the free boundary condition in \eqref{eq:FBP1}.

\begin{definition}\label{def:grad-plus}
	Let $U\subset \mathbb{R}^{n}$ be an open set and define \(d(y):=\operatorname{dist}(y,\partial U)\).
	We say that \( \left| \nabla u^{+} \right| \le h \) on \(\mathcal{F}(u)\) if, for every \(y_{0}\in \mathcal{F}(u)\) and every
	\(0\le \varphi \in C\big(B_{\delta}(y_{0})\big)\) with \(0<\delta<d(y_{0})\), such that
	\[
	y_{0}\in \mathcal{F}(\varphi)\quad\text{and}\quad 
	\mathcal{F}(\varphi)\ \text{is of class } C^{1}\ \text{near } y_{0},
	\]
	and
	\[
	\varphi \in C^{1}\!\big(\{\varphi>0\}\cap \overline{B_{\delta}(y_{0})}\big)
	\]
	touches \(u^{+}\) from below at \(y_{0}\), then
	\[
	\frac{\partial \varphi}{\partial \nu}(y_{0}) \le h(y_{0}),
	\]
	where \(\nu\) is the unit normal vector at \(y_{0}\) pointing towards the interior of the set \(\{\varphi>0\}\).
\end{definition}
Having introduced the notion of solution to \eqref{eq:FBP1}, we are now ready to prove Theorem \ref{thm:Lip-FBP}.

\begin{proof}[Proof of Theorem~\ref{thm:Lip-FBP}]
	The argument is divided into five steps. We denote \(B_{1}^{+}(u):=B_{1}\cap\{u>0\}\).

    \smallskip
	\noindent\textbf{Step 1.}
	Let us assume \(0\in \mathcal{F}(u)=\partial\{u>0\}\cap B_{1}\). Fixed \(x_{0}\in B_{1/2}\cap\{u>0\}\), we set
	\begin{equation}\label{def-d0}
	d_{0}:=\inf\{|x_{0}-y|:\ y\in \mathcal{F}(u)\}.
	\end{equation}
	Since \(\mathcal{F}(u)\subset \overline{B_{1}}\) and \(0\in \mathcal{F}(u)\), it follows that 
\(
d_{0}\le |x_{0}|<\tfrac12
\).
By compactness of \(\mathcal{F}(u)\subset \overline{B_{1}}\), there exists \(y_{0}\in \mathcal{F}(u)\) such that 
\(
|x_{0}-y_{0}|=d_{0}=\operatorname{dist}(x_{0},\mathcal{F}(u))
\).
Consequently, \(B_{d_{0}}(x_{0})\subset \{u>0\}\). Indeed, if there existed \(p\in B_{d_{0}}(x_{0})\) with \(u(p)\le 0\), then by continuity along the segment \([x_{0},p]\), one could find \(z\in \mathcal{F}(u)\) with \(|x_{0}-z|<d_{0}\), which contradicts \eqref{def-d0}.

	\noindent\textbf{Step 2.}
	Let us define
	\[
	v(x):=\frac{u(x_{0}+d_{0}x)}{d_{0}},\qquad x\in B_{2}.
	\]
	Since \(B_{d_{0}}(x_{0})\subset\{u>0\}\), it follows that \(B_{1}\subset\{v>0\}\). The $v$ solves in the viscosity sense, the equation
	\[
	|\nabla v|^{\alpha}\,F_{d_{0}}(D^{2}v)=f_{d_{0}}\quad\text{in }B_{1},
    \]
    where 
    \[
	F_{d_{0}}(M):=d_{0}\,F\!\Big(\frac{M}{d_{0}}\Big)\qquad\text{and}\qquad 
	f_{d_{0}}(x):=d_{0}\,f(x_{0}+d_{0}x).
	\]
	Then \(F_{d_{0}}\) has the same ellipticity \((\lambda,\Lambda)\) as \(F\) and \(F_{d_{0}}(0)=0\). So, in particular \(\cM^-_{\lambda,\Lambda}\le F_{d_{0}}\le \cM^+_{\lambda,\Lambda}\). Therefore, by Theorem \ref{Thm:HarnackBD},
	\begin{equation}\label{eq-v-FB}
	\sup_{B_{1/2}} v\ \le\ C_{\mathrm H}\Big(\inf_{B_{1/2}} v+\|f_{d_{0}}\|_{L^{\infty}(B_{1})}^{\frac{1}{1+\alpha}}\Big),
	\end{equation}
	which implies
	\begin{equation}\label{useHar-FB}
	v(0)\ \le\ C_{\mathrm H}\Big(\inf_{B_{1/2}} v+d_{0}^{\frac{1}{1+\alpha}}\|f\|_{L^{\infty}(B_{1})}^{\frac{1}{1+\alpha}}\Big).
	\end{equation}
	
	\noindent\textbf{Step 3.}
	Let us fix
	\[
	A:=\|f_{d_{0}}\|_{L^{\infty}(B_{1})}^{\frac{1}{1+\alpha}}\qquad\text{and}\qquad
	y_{0}':=(y_{0}-x_{0})/d_{0}\in\partial B_{1}.
	\]
	So, \eqref{useHar-FB} reads as 
	\begin{equation}\label{useHar-FB2}
	v(0) \le\ C_{\mathrm H}\big(\inf_{B_{1/2}}v + A\big).
	\end{equation}
	 Let us fix also
	\[
	\theta:=\min\Big\{1,\Big(\frac{c_{0}}{2}\Big)^{\!\frac{1}{1+\alpha}}\Big\}\in(0,1]
    \]
    and we assume that
    \begin{equation}\label{eq:smallRHS-threshold}	C_{\mathrm H}A\ \le\ \frac{\theta}{2}\,v(0).
	\end{equation}
	Then by \eqref{useHar-FB2}, we have
	\begin{equation}\label{eq:M-def}
	\inf_{B_{1/2}}v\ \ge\ \frac{1}{C_{\mathrm H}}\,v(0)-A\ \ge\ \frac{1}{2C_{\mathrm H}}\,v(0)\ =:\ M\ >0,
	\end{equation}
	where \(M>0\) since \(x_{0}\in\{u>0\}\) and so \(v(0)=u(x_{0})/d_{0}>0\). Moreover, we have
	\begin{equation}
	\begin{aligned}\label{eq:strict-gap}
    2\|f_{d_0}\|_{L^\infty(B_1)}
		&=2A^{1+\alpha}
		\ \le\ 2\Big(\frac{\theta}{2C_{\mathrm H}}\,v(0)\Big)^{1+\alpha}
		\ =\ \frac{\theta^{1+\alpha}}{2^{\alpha}C_{\mathrm H}^{1+\alpha}}\,v(0)^{1+\alpha} \\[1mm]
		&\le\ \frac{c_0}{2^{1+\alpha}C_{\mathrm H}^{1+\alpha}}\,v(0)^{1+\alpha}
		\ =\ c_0\Big(\frac{v(0)}{2C_{\mathrm H}}\Big)^{1+\alpha}
		\ =\ c_0\,M^{1+\alpha}.
	\end{aligned}
\end{equation}
	
	Let \(\mathcal A_{1}:=B_{1}\setminus\overline{B_{1/2}}\). By Proposition~\ref{prop:barrier}, there exists \(\Gamma_{M}\in C^{\infty}(\overline{\mathcal A_{1}})\) such that
	\[
	\Gamma_{M}=0\ \text{on }\partial B_{1},\qquad
	\Gamma_{M}=M\ \text{on }\partial B_{1/2},\qquad
	|\nabla\Gamma_{M}|^{\alpha}\,\cM^-_{\lambda,\Lambda}(D^{2}\Gamma_{M})\ \ge\ c_{0}\,M^{1+\alpha}\ \text{ in }\mathcal A_{1},
	\]
	and \(|\nabla\Gamma_{M}|\ge C_{\mathrm{grad}}\,M\) on \(\overline{\mathcal A_{1}}\).
	Then on \(\partial\mathcal A_{1}\), by \eqref{eq:M-def}, we have \(v\ge M=\Gamma_{M}\) on \(\partial B_{1/2}\) and, since \(B_{1}\subset\{v>0\}\), it follows that \(v\ge 0=\Gamma_{M}\) on \(\partial B_{1}\). Hence, by the comparison principle with strict inequalities on right–hand sides (see, e.g.,\cite[Theorem 2.9]{Birindelli-Demengel2004}), we obtain \(v\ge\Gamma_{M}\) in \(\mathcal A_{1}\). For the sake of completeness, we present below a direct proof based on viscosity methods. Suppose \(\max_{\overline{\mathcal A_1}}(\Gamma_M - v) > 0\) and let \(x_{*}\in\mathcal A_1\) be an interior maximizer of \(w:=\Gamma_M-v\). Then \(v-\Gamma_M\) has a local minimum at \(x_{*}\) and, since \(v\) is a viscosity supersolution if \eqref{eq-v-FB} and \(\Gamma_M\in C^\infty\), we have
	\[
	|\nabla \Gamma_M(x_*)|^{\alpha}\,\cM^-_{\lambda,\Lambda}\big(D^{2}\Gamma_M(x_*)\big)\ \le\ \|f_{d_0}\|_{L^\infty(B_1)}.
	\]
	On the other hand, by the barrier property, we get
	\[
	|\nabla \Gamma_M(x_*)|^{\alpha}\,\cM^-_{\lambda,\Lambda}\big(D^{2}\Gamma_M(x_*)\big)\ \ge\ c_0\,M^{1+\alpha}\ >\ 0,
	\]
	which contradicts \eqref{eq:strict-gap}. Note that, since \eqref{eq:strict-gap} yields \(c_0 M^{1+\alpha} \ge 2\|f_{d_0}\|_{L^\infty(B_1)}\), the contradiction is strict whenever \(\|f_{d_0}\|_{L^\infty(B_1)}>0\). In the remaining case \(\|f_{d_0}\|_{L^\infty(B_1)}=0\), the result still follows from the fact that \(c_0 M^{1+\alpha}>0\). Therefore
	\begin{equation}\label{eq:v-ge-barrier}
	\Gamma_M \le v \qquad\text{in }\mathcal A_1.
	\end{equation}
    Now, let us consider the following extension of $\Gamma_M$
	\[
	\widetilde\Gamma_{M}(x):=
	\begin{cases}
		0, & x\notin B_{1},\\
		\Gamma_{M}(x), & x\in \mathcal A_{1},\\
		M, & x\in B_{1/2}.
	\end{cases}
	\]
	Let \(\delta>0\) be such that \(B_{\delta}(y_{0}')\) meets only the smooth arc of \(\partial B_{1}\). Then \(0\le\widetilde\Gamma_{M}\in C^{0}(B_{\delta}(y_{0}'))\), \(\mathcal{F}(\widetilde\Gamma_{M})\) is \(C^{1}\) near \(y_{0}'\), and \(\widetilde\Gamma_{M}\in C^{1}(\{\widetilde\Gamma_{M}>0\}\cap\overline{B_{\delta}(y_{0}')})\). By \eqref{eq:v-ge-barrier} and \(v\ge M\ge0\) in \(B_{1/2}\), we have \(\widetilde\Gamma_{M}\le v^{+}\) near \(y_{0}'\) and \(\widetilde\Gamma_{M}(y_{0}')=v^{+}(y_{0}')=0\). Thus \(\widetilde\Gamma_{M}\) touches \(v^{+}\) from below at \(y_{0}'\). So, by Definition~\ref{def:grad-plus}, if \(\nu\) is the inward normal to \(\{\widetilde\Gamma_{M}>0\}\) at \(y_{0}'\), we have
	\[
	\frac{\partial \widetilde\Gamma_{M}}{\partial \nu}(y_{0}')\ \le\ h_{d_{0}}(y_{0}')\ \le\ \|h\|_{L^{\infty}(\mathcal{F}(u))},\qquad
	\frac{\partial \widetilde\Gamma_{M}}{\partial \nu}(y_{0}')=|\nabla\Gamma_{M}(y_{0}')|\ge C_{\mathrm{grad}}\,M,
	\]
	which ensure that
	\begin{equation}\label{eq:M-le-h}
	M\ \le\ \frac{1}{C_{\mathrm{grad}}}\,\|h\|_{L^{\infty}(\mathcal{F}(u))}.
	\end{equation}
	
	\noindent\textbf{Step 4.}
	Now, if the condition in \eqref{eq:smallRHS-threshold} fails, then
	\[
	v(0)\ \le\ 2\theta^{-1}\,C_{\mathrm H}\,d_{0}^{\frac{1}{1+\alpha}}\,
	\|f\|_{L^{\infty}(B_{1})}^{\frac{1}{1+\alpha}}.
    \] By combing this with \eqref{eq:M-le-h} and the pointwise gradient estimate \eqref{eq:grad0} of Theorem \ref{thm:gradient-est} on \(v\) in \(B_{1}\),
    we obtain
    \begin{equation}\label{stima-gradv0}
	|\nabla v(0)| \le C_{0}\Big(\|h\|_{L^{\infty}(\mathcal{F}(u))}+d_{0}^{\frac{1}{1+\alpha}}\|f\|_{L^{\infty}(B_{1})}^{\frac{1}{1+\alpha}}\Big),
	\end{equation}
    for some suitable constant $C_0=C_0(n,\lambda,\Lambda,\alpha)$. Then, since \(\nabla v(0)=\nabla u(x_{0})\), by \eqref{stima-gradv0}, we conclude that for every \(x_{0}\in B_{1/2}\cap\{u>0\}\) and \(d_{0}=\operatorname{dist}(x_{0},\mathcal{F}(u))\),
	\begin{equation}\label{stimastep4}
	|\nabla u(x_{0})|
	\ \le\ C_{0}\Big(\|h\|_{L^{\infty}(\mathcal{F}(u))}+d_{0}^{\frac{1}{1+\alpha}}\|f\|_{L^{\infty}(B_{1})}^{\frac{1}{1+\alpha}}\Big).
	\end{equation}
	Moreover, since \(d_{0}\le 1\), it follows
	\[
	\sup_{B_{1/2}\cap\{u>0\}}|\nabla u|\ \le\ C_{0}\Big(\|h\|_{L^{\infty}(\mathcal{F}(u))}+\|f\|_{L^{\infty}(B_{1})}^{\frac{1}{1+\alpha}}\Big).
	\]
	Thus, in particular, by Theorem \ref{thm:gradient-est}, there exists \(\gamma=\gamma(n,\lambda,\Lambda,\alpha)\in(0,1]\), such that
	\[
	u\in C^{1,\gamma}\big(B_{1/2}\cap\{u>0\}\big),\quad\text{and}
\quad	\|\nabla u\|_{L^{\infty}(B_{1/2}\cap\{u>0\})}
	\ \le\ C_{0}\Big(\|h\|_{L^{\infty}(\mathcal{F}(u))}+\|f\|_{L^{\infty}(B_{1})}^{\frac{1}{1+\alpha}}\Big).
	\]

    \smallskip
	\noindent\textbf{Step 5.}
Fixed \(x_{0}\in B_{1/2}\cap\{u>0\}\), let us set \(d_{0}:=\operatorname{dist}(x_{0},\mathcal{F}(u))\), \(r_{*}:=\min\{d_{0},\tfrac12\}\) and 
\[v(x):=u(x_{0}+r_{*}x)/r_{*}\qquad  \text{for }\ x\in B_{1}.\]
Then \(B_{1}\subset\{v>0\}\) and $v$ solves in the viscosity sense
\[
|\nabla v|^{\alpha}\,F_{r_{*}}(D^{2}v)=f_{r_{*}}\quad\text{in }B_{1},
\]
where
\[
F_{r_{*}}(M):=r_{*}\,F\!\Big(\frac{M}{r_{*}}\Big)\qquad \text{and}\qquad
f_{r_{*}}(x):=r_{*}\,f(x_{0}+r_{*}x).
\]
Then \(F_{r_{*}}\) uniformly elliptic with the same \((\lambda,\Lambda)\) as \(F\) and \(F_{r_{*}}(0)=0\).
If \(r_{*}=d_{0}\le\tfrac12\), then by \eqref{stimastep4}, we have
\begin{equation}\label{stima-gra-1}
|\nabla u(x_{0})|\ \le\ C_1\Big(\|h\|_{L^{\infty}(\mathcal{F}(u))}+d_{0}^{\frac{1}{1+\alpha}}\|f\|_{L^{\infty}(B_{1})}^{\frac{1}{1+\alpha}}\Big),
\end{equation}
for some constant $C_1=C_1(n,\lambda,\Lambda,\alpha)>0$.
While, if \(r_{*}=\frac12\), so when \(d_{0}>\tfrac12\), then applying Theorem \ref{thm:gradient-est} to \(v\) on \(B_{1}\) and using \(\|v\|_{L^{\infty}(B_{1})}=\|u\|_{L^{\infty}(B_{r_{*}}(x_{0}))}/r_{*}\le 2\,\|u\|_{L^{\infty}(B_{1}^{+}(u))}\) and \(\|f_{r_{*}}\|_{L^{\infty}(B_{1})}\le \tfrac12\|f\|_{L^{\infty}(B_{1})}\), we obtain
\begin{equation}\label{stima-grad-2}
|\nabla u(x_{0})|\ \le\ C\Big(\|u\|_{L^{\infty}(B_{1}^{+}(u))}+\|f\|_{L^{\infty}(B_{1})}^{\frac{1}{1+\alpha}}\Big),
\end{equation}
where $C=C(n,\lambda,\Lambda,\alpha)>0$. So combining \eqref{stima-gra-1} and \eqref{stima-grad-2}, there exists \(C_{0}=C_{0}(n,\lambda,\Lambda,\alpha)>0\) such that, for every \(x_{0}\in B_{1/2}\cap\{u>0\}\),
\[
|\nabla u(x_{0})|\ \le\ C_{0}\Big(\|h\|_{L^{\infty}(\mathcal{F}(u))}+\|u\|_{L^{\infty}(B_{1}^{+}(u))}+\|f\|_{L^{\infty}(B_{1})}^{\frac{1}{1+\alpha}}\Big).
\]
Thus, in particular, by Theorem \ref{thm:gradient-est}, we have \(u\in C^{1,\gamma}(B_{1/2}\cap\{u>0\})\) and
\[
\|\nabla u\|_{L^{\infty}(B_{1/2}\cap\{u>0\})}\ \le\ C_{0}\Big(\|h\|_{L^{\infty}(\mathcal{F}(u))}+\|u\|_{L^{\infty}(B_{1}^{+}(u))}+\|f\|_{L^{\infty}(B_{1})}^{\frac{1}{1+\alpha}}\Big).
\]
Finally, in light of  Remark~\ref{regularity-positive-part-balls}, since \(u\in C(B_{1})\) and \(u\in W^{1,\infty}(\{u>0\}\cap B_{1})\) with the bound above, we conclude
\[
u^{+}\in C^{0,1}(B_{1/2}),\qquad
\|\nabla u^{+}\|_{L^{\infty}(B_{1/2})}\ \le\ C_{0}\Big(\|h\|_{L^{\infty}(\mathcal{F}(u))}+\|u\|_{L^{\infty}(B_{1}^{+}(u))}+\|f\|_{L^{\infty}(B_{1})}^{\frac{1}{1+\alpha}}\Big).
\]
When \(0\in \mathcal{F}(u)\), the argument employed in Steps 3 and 4 shows that the term \(\|u\|_{L^{\infty}(B_{1}^{+}(u))}\) can be omitted.
\end{proof}

\section{Uniform Lipschitz continuity for a flame propagation model} \label{s:11}

In this section, we present the proof of Theorem \ref{thm:flame-deg}, which establishes the Lipschitz regularity of solutions to the flame propagation problem \eqref{eq:flame_prop}.
\begin{proof}
	\noindent By Theorem \ref{thm:gradient-est}, any bounded viscosity solution of
	\(
	|\nabla u|^{\alpha}F(D^2u)=g
	\)
	with $g\in C\cap L^\infty$ belongs to $C^{1,\gamma}_{\mathrm{loc}}(B_1)$
	for some $\gamma \in (0,1)$ depending on $n, \lambda, \Lambda$ and $\alpha$.
	\noindent Here \(g(x):=\beta_\varepsilon(u_\varepsilon(x))+f(x)\) is continuous and bounded, so \(u_\varepsilon\in C^{1,\gamma}_{\mathrm{loc}}(B_1)\).
	
	\medskip
	\noindent Fix $\varepsilon\in(0,\tfrac18]$ and $x_0\in B_{1/2}$. We argue by cases.
	
	\medskip
	\noindent\textbf{Case A: $x_0\in\{0\le u_\varepsilon\le\varepsilon\}$.}
	\noindent Since $B_\varepsilon(x_0)\subset B_1$, define
	\[
	v(x):=\frac{1}{\varepsilon}\,u_\varepsilon(x^{}_0+\varepsilon x),\qquad x\in B_1.
	\]
	By Remark \ref{rem:rescaling}, with $a=\tfrac{1}{\varepsilon}$ and $b=\varepsilon$, we get
	\begin{equation}\label{eq:RS-CaseA}
		\nabla v(x)=\nabla u_\varepsilon(x_0+\varepsilon x),\qquad
		|\nabla v|^{\alpha}\,\widetilde F(D^2v)=\beta(v)+\varepsilon f(x_0+\varepsilon x)\quad\text{in }B_1,
	\end{equation}
	\noindent where $\widetilde F(M)=\varepsilon\,F(M/\varepsilon)$ is $(\lambda,\Lambda)$-elliptic. Since $v(0)=u_\varepsilon(x_0)/\varepsilon\le1$ and $\|\beta(v)\|_{L^\infty(B_1)}\le\|\beta\|_{L^\infty(\R)}$, 
	by the pointwise gradient estimate \eqref{eq:grad0} there exists a constant \(C_A=C_A(n,\lambda,\Lambda,\alpha)\), independent of $\varepsilon$ and $\beta$, such that
	\begin{equation}\label{eq:grad-CaseA}
	|\nabla u_\varepsilon(x_0)|=|\nabla v(0)|
	\le C_A\Big(1+\|\beta\|_{L^\infty(\R)}^{\frac{1}{1+\alpha}}
	+\|f\|_{L^\infty(B_1)}^{\frac{1}{1+\alpha}}\Big).
	\end{equation}
	
	\medskip
	\noindent\textbf{Case B:} Assume $x_0\in B_{1/2}$ with $u_\varepsilon(x_0)>\varepsilon$ and $u_\varepsilon(0)\le\varepsilon$. Setting
	\[
	E_\varepsilon:=\{u_\varepsilon\le\varepsilon\}\cap\overline{B_{1/2}},
	\]
	we choose $y_0\in E_\varepsilon$ such that
	\begin{equation}\label{eq:d0}
		d_0:=|x_0-y_0|=\operatorname{dist}(x_0,E_\varepsilon)>0.
	\end{equation}
    Consequently, \(B_{d_{0}}(x_{0})\subset \{u_\varepsilon>\varepsilon\}\). Indeed, if there existed \(p\in B_{d_{0}}(x_{0})\) with \(u_\varepsilon(p)\le \varepsilon\), then by continuity along the segment \([x_{0},p]\), one could find \(z\in \mathcal{F}(u)\) with \(|x_{0}-z|<d_{0}\), which contradicts \eqref{eq:d0}. Hence
	\begin{equation}\label{eq:ball-in-pos}
		|\nabla u_\varepsilon|^\alpha F(D^2u_\varepsilon)=f \ \text{ in } B_{d_0}(x_0).
	\end{equation}
	Let us define $w:=u_\varepsilon-\varepsilon\ge0$ in $B_{d_0}(x_0)$ and the rescaled function
	\[
	v(x):=\frac{1}{d_0}\,w(x_0+d_0 x),\qquad \text{for }\ x\in B_1.
	\]
	A direct computation yields
	\begin{equation}\label{eq:RS-id}
		\nabla v(x)=\nabla u_\varepsilon(x_0+d_0x)\qquad\text{and}\qquad
		D^2v(x)=d_0\,D^2u_\varepsilon(x_0+d_0x).
	\end{equation}
	By \eqref{eq:ball-in-pos}, the function $v$ solves in the viscosity sense
	\begin{equation}\label{eq:RS-PDE}
		|\nabla v|^\alpha\,\widetilde F(D^2v)=d_0\,f(x_0+d_0x)\qquad\text{in }B_1,	\end{equation}
	where $\widetilde F(M):=d_0\,F\!\left(\frac{1}{d_0}M\right)$ is $(\lambda,\Lambda)$-elliptic. Note also that $v\ge0$ in $B_1$ and
	\begin{equation}\label{eq:touch-point}
		v(x_*)=0 \quad\text{for}\quad x_*:=\frac{y_0-x_0}{d_0}\in\partial B_1 .
	\end{equation}
	By the boundary gradient estimate \eqref{eq:grad0-bdry} to $v$ (with datum $d_0\,f(x_0+d_0\cdot)$), since $d_0\le \frac12$, there exists a constant 
	\(
	C_1=C_1(n,\lambda,\Lambda,\alpha)
	\)
	independent of $\varepsilon$ and $\beta$ such that
	\begin{equation}\label{eq:bdry-grad}
		|\nabla u_\varepsilon(x_0)|=|\nabla v(0)|
		\le C_1\Big(\partial_\nu v(x_*)+\|f\|_{L^\infty(B_1)}^{\frac{1}{1+\alpha}}\Big).
	\end{equation}
	Moreover, by \eqref{eq:RS-id} and $\partial_\nu v(x_*)\le|\nabla v(x_*)|$, we have
	\begin{equation}\label{eq:normals-to-grad}
		\partial_\nu v(x_*)\le |\nabla v(x_*)|=|\nabla u_\varepsilon(y_0)|.
	\end{equation}
	
	On the other hand, since $y_0\in E_\varepsilon\subset\{0\le u_\varepsilon\le\varepsilon\}\cap B_{1/2}$, by \eqref{eq:grad-CaseA}
	\begin{equation}\label{eq:CaseA-grad}
		|\nabla u_\varepsilon(y_0)|
		\le C_A\Big(1+\|\beta\|_{L^\infty(\R)}^{\frac{1}{1+\alpha}}
		+\|f\|_{L^\infty(B_1)}^{\frac{1}{1+\alpha}}\Big),
	\end{equation}
    with $C_A=C_A(n,\lambda,\Lambda,\alpha)$ independent of $\varepsilon$ and $\beta$. 
    
	So, combining \eqref{eq:bdry-grad}, \eqref{eq:normals-to-grad} and \eqref{eq:CaseA-grad}, we obtain
	\begin{equation}\label{eq:grad-CaseB}
		|\nabla u_\varepsilon(x_0)|
		\le C_B\Big(1+\|\beta\|_{L^\infty(\R)}^{\frac{1}{1+\alpha}}
		+\|f\|_{L^\infty(B_1)}^{\frac{1}{1+\alpha}}\Big),
	\end{equation}
	for some suitable constant \(C_B=C_B(n,\lambda,\Lambda,\alpha)\), independent of $\varepsilon$ and $\beta$.
	
	\medskip
	\noindent\textbf{Unified local estimate.}
	\noindent Let
	\[
	E_\varepsilon:=\{u_\varepsilon\le\varepsilon\}\cap\overline{B_{1/2}},\qquad\text{and}\qquad 
	d_0:=\dist(x_0,E_\varepsilon).
	\]
	\noindent We treat three regimes:
	
	\smallskip
	\noindent\emph{(1) Transition layer: $0\le u_\varepsilon(x_0)\le\varepsilon$.}
	\noindent Defining $v(x)=u_\varepsilon(x_0+\varepsilon x)/\varepsilon$ and applying \eqref{eq:grad0} as in \eqref{eq:grad-CaseA}, we conclude that
	\begin{equation}\label{eq:unif-case1}
		|\nabla u_\varepsilon(x_0)|
		\le C_{\mathrm{tr}}\Big(1+\|\beta\|_{L^\infty(\R)}^{\frac{1}{1+\alpha}}
		+\|f\|_{L^\infty(B_1)}^{\frac{1}{1+\alpha}}\Big),
	\end{equation}
	for some constant $C_{\mathrm{tr}}=C_{\mathrm{tr}}(n,\lambda,\Lambda,\alpha)$,
	independent of $\varepsilon$ and $\beta$.
	
	\smallskip
	\noindent\emph{(2) Near the $\varepsilon$–level and $u_\varepsilon(x_0)>\varepsilon$.} Let us assume that  $d_0\le\tfrac{1}{10}$ and let $y_0\in E_\varepsilon$ attaining $d_0$. Following the same argument employed in \textbf{Case B} together with \eqref{eq:unif-case1} at $y_0$, we get
	\begin{equation}\label{eq:unif-case2}
		|\nabla u_\varepsilon(x_0)|
		\le C_{\mathrm{near}}\Big(1+\|\beta\|_{L^\infty(\R)}^{\frac{1}{1+\alpha}}
		+\|f\|_{L^\infty(B_1)}^{\frac{1}{1+\alpha}}\Big),
	\end{equation}
	where $C_{\mathrm{near}}=C_{\mathrm{near}}(n,\lambda,\Lambda,\alpha)$, is independent of $\varepsilon$ and $\beta$.
	
	\smallskip
	\noindent\emph{(3) Interior regime.} Suppose that $d_0>\frac{1}{10}=:r$. Setting $\tilde v(x)=u_\varepsilon(x_0+rx)/r$ and apply \eqref{eq:grad0} to $\tilde v$, we obtain
	\begin{equation}\label{eq:unif-case3}
		|\nabla u_\varepsilon(x_0)|
		\le C_{\mathrm{far}}\Big(\tfrac{1}{r}\,\|u_\varepsilon\|_{L^\infty(B_1)}
		+\|f\|_{L^\infty(B_1)}^{\frac{1}{1+\alpha}}\Big),
	\end{equation}
	\noindent with $C_{\mathrm{far}}=C_{\mathrm{far}}(n,\lambda,\Lambda,\alpha)$,
	independent of $\varepsilon$ and of $\beta$.
	
	\smallskip
	In conclusion, by \eqref{eq:unif-case1}, \eqref{eq:unif-case2} and \eqref{eq:unif-case3}, we deduce
	\[
	\|\nabla u_\varepsilon\|_{L^\infty(B_{1/2})}
	\le C_*\Big(
	1+\|\beta\|_{L^\infty(\R)}^{\frac{1}{1+\alpha}}
	+\|f\|_{L^\infty(B_1)}^{\frac{1}{1+\alpha}}
	+\|u_\varepsilon\|_{L^\infty(B_1)}
	\Big),
	\]
	\noindent where
	\[
	C_*=\max\{C_{\mathrm{tr}},C_{\mathrm{near}},C_{\mathrm{far}},C_A,C_B\}
	=C_*(n,\lambda,\Lambda,\alpha)
	\]
	\noindent is independent of $\varepsilon$ and $\beta$.
\end{proof}

\appendix
\section{From Campanato to \(C^{1,\gamma}\) Regularity} \label{appendixA}
In this Appendix, we present some estimates relating different notions of $C^{1,\gamma}$.
 This appendix provides a self-contained discussion that reconciles the notions of classical \(C^{1,\gamma}\) regularity, \(C^{1,\gamma}\) via polynomial approximation and Campanato \(C^{1,\gamma}\) in the sense of \cite[Appendix 1]{campanato}. Although the results are well known, especially in the $C^{0,\gamma}$ framework, we include here explicit proofs and quantitative estimates, which are used throughout the paper.

\begin{definition}[Modulus of continuity and $C^{0,\omega}$ seminorm]
	A \emph{modulus of continuity} is a function $\omega:[0,\infty)\to[0,\infty)$ that is
	continuous, nondecreasing, subadditive, and satisfies $\omega(0)=0$.
	We say that $\omega$ is \emph{strictly positive} if $\omega(t)>0$ for every $t>0$.
	
	For a set $E\subset\mathbb R^n$ and $f:E\to\mathbb R$, define the $C^{0,\omega}$-seminorm as
	\begin{equation}\label{defCamp}
	[f]_{C^{0,\omega}(E)}
	:=\inf\Big\{\,C\ge0:\ |f(x)-f(y)|\le C\,\omega(|x-y|)\ \ \forall\,x,y\in E\,\Big\}.
	\end{equation}
    \end{definition}
    \begin{remark}
	If $\omega$ is strictly positive, then the infimum in \eqref{defCamp}, coincides with the supremum quotient
	\[
	[f]_{C^{0,\omega}(E)}
	=\sup_{\substack{x,y\in E\\ x\ne y}}\frac{|f(x)-f(y)|}{\omega(|x-y|)}.
	\]
\end{remark}

We begin by establishing a quantitative criterion that connects pointwise $C^{1,\omega}$ affine expansions with the classical $C^{1,\omega}$ regularity of a function, providing explicit bounds for the gradient and its modulus of continuity.

\begin{theorem}[Uniform pointwise $C^{1,\omega}$ estimates]\label{thm:UTE-C1omega}
	Let $0<\rho<R$, $0<\sigma\le R-\rho$ and $T>0$. Let $\omega$ be a modulus of continuity and let $u\in L^\infty(B_R)$. Assume that for every $x_0\in B_\sigma$ there exists an affine function
	\[
	\ell_{x_0}(x):=u(x_0)+p(x_0)\cdot(x-x_0)
	\]
	such that
	\begin{equation}\label{eq:H}
		|u(x)-\ell_{x_0}(x)|\le T\,|x-x_0|\,\omega(|x-x_0|)\qquad \text{for all  }\ x\in B_\rho(x_0).
	\end{equation}
	Then $u\in C^{1,\omega}(B_\sigma)$ and
	\begin{equation}\label{eq:Linf}
		\|\nabla u\|_{L^\infty(B_\sigma)}\le \frac{2\|u\|_{L^\infty(B_R)}}{\rho}+T\,\omega(\rho/2).
	\end{equation}
	Moreover, if $B_{\rho/2}\subset B_\sigma$, then
	\begin{equation}\label{eq:holder-semi}
		[\nabla u]_{C^{0,\omega}(B_{\rho/2})}\le 8\,T.
	\end{equation}
	If, in addition,
	\begin{equation}\label{eq:Sbound}
		\sup_{x\in B_\sigma}|\nabla u(x)|=\sup_{x\in B_\sigma}|p(x)|\le S,
	\end{equation}
	and $\omega$ is strictly positive, then
	\begin{equation}\label{eq:global-S}
		[\nabla u]_{C^{0,\omega}(B_\sigma)}\le 8\,T+\frac{2S}{\omega(\rho)}.
	\end{equation}
	Without assuming \eqref{eq:Sbound} one still has
	\begin{equation}\label{eq:global-noS}
		[\nabla u]_{C^{0,\omega}(B_\sigma)} \le\ 10\,T+\frac{4\,\|u\|_{L^\infty(B_R)}}{\rho\,\omega(\rho)}.
	\end{equation}
	All constants above are explicit and independent of $\sigma$.
\end{theorem}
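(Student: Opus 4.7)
The plan has three phases: first, promote the pointwise affine expansion \eqref{eq:H} to a genuine differentiability statement and extract the $L^\infty$ bound \eqref{eq:Linf}; second, obtain the near-range modulus bound \eqref{eq:holder-semi} by comparing two affine expansions at nearby base points; third, globalize to $B_\sigma$ to get \eqref{eq:global-S} and \eqref{eq:global-noS}.

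For the first phase, rewriting \eqref{eq:H} with $h=x-x_0$ yields $|u(x_0+h)-u(x_0)-p(x_0)\cdot h|\le T|h|\omega(|h|)=o(|h|)$ as $|h|\to 0$, so $u$ is Fr\'echet differentiable at each $x_0\in B_\sigma$ with $\nabla u(x_0)=p(x_0)$; in particular the identification $\sup_{B_\sigma}|\nabla u|=\sup_{B_\sigma}|p|$ appearing in \eqref{eq:Sbound} is automatic. For \eqref{eq:Linf}, apply \eqref{eq:H} at the two symmetric test points $x_\pm:=x_0\pm(\rho/2)\,p(x_0)/|p(x_0)|$ (both in $B_R$ thanks to $\sigma+\rho\le R$) and subtract the two identities: the additive constant cancels and one obtains $\rho|p(x_0)|\le|u(x_+)-u(x_-)|+T\rho\,\omega(\rho/2)$, which is exactly \eqref{eq:Linf}.

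The second phase is the heart of the proof, driven by the algebraic identity
\[
\ell_{x_0}(x)-\ell_{y_0}(x)=c_{x_0,y_0}+\bigl(p(x_0)-p(y_0)\bigr)\cdot(x-y_0),
\]
in which $c_{x_0,y_0}$ is independent of $x$. Given $x_0,y_0\in B_\sigma$ with $r:=|x_0-y_0|$, set $e:=(p(x_0)-p(y_0))/|p(x_0)-p(y_0)|$ and pick symmetric test points $z_\pm:=y_0\pm s\,e$ with $s$ tuned so that $z_\pm\in B_\rho(x_0)\cap B_\rho(y_0)$. Evaluating the identity at $z_+$ and $z_-$ and subtracting cancels $c_{x_0,y_0}$ and leaves $2s\,|p(x_0)-p(y_0)|$ on the left; the right-hand side is controlled by four instances of \eqref{eq:H}, and subadditivity $\omega(a+b)\le\omega(a)+\omega(b)$ converts the resulting terms into multiples of $\omega(r)$. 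The choice $s=r$ (legal when $2r\le\rho$) gives $|p(x_0)-p(y_0)|\le C_1 T\,\omega(r)$ with an explicit numerical $C_1$. Running this argument inside $B_{\rho/2}$, where every pair is at distance $<\rho$, and handling the regime $r>\rho/2$ either by shortening $s$ to $\rho-r$ or by inserting a midpoint in $B_{\rho/2}$ and invoking monotonicity of $\omega$, collects the constant $8T$ of \eqref{eq:holder-semi}.

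The third phase is essentially bookkeeping. For $x_0,y_0\in B_\sigma$ with $r\le\rho/2$, phase two directly supplies the $8T$ contribution. For $r>\rho/2$, under \eqref{eq:Sbound} the trivial estimate $|p(x_0)-p(y_0)|\le 2S$ is converted to a multiple of $\omega(r)$ through $\omega(r)\ge\omega(\rho)$ on the far range and a midpoint reduction on the gap $(\rho/2,\rho)$; this yields \eqref{eq:global-S}. Without \eqref{eq:Sbound}, substitute the $L^\infty$ bound \eqref{eq:Linf} for $S$ and regroup the resulting error terms to obtain \eqref{eq:global-noS}. The only genuinely delicate ingredient is the symmetric-points comparison of phase two combined with a careful choice of $s$ and subadditivity of $\omega$; the rest reduces to tracking constants.
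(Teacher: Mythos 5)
Your phases 1 and 3 coincide with the paper's argument (the paper takes an arbitrary direction $e\in\mathbb S^{n-1}$ instead of $e=p(x_0)/|p(x_0)|$, which is immaterial), and your scheme would give the qualitative conclusion $u\in C^{1,\omega}(B_\sigma)$. The genuine problem is phase 2: as written it does not produce the explicit constants asserted in the theorem, and one of your two patches is broken. Anchoring the symmetric test points at $y_0$ forces $s+r\le\rho$, so the direct argument only covers $r:=|x_0-y_0|\le\rho/2$; with $s=r$ and subadditivity it yields $|p(x_0)-p(y_0)|\le\bigl(2\omega(2r)+\omega(r)\bigr)T\le 5T\,\omega(r)$, i.e. $C_1=5$. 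For $\rho/2<r<\rho$ (which does occur for pairs in $B_{\rho/2}$, whose diameter is $\rho$), the patch ``shorten $s$ to $\rho-r$'' gives $|p(x_0)-p(y_0)|\le T\rho\,\omega(\rho)/(\rho-r)+T\,\omega(\rho-r)$, which blows up as $r\to\rho$ and hence cannot yield any uniform constant; the midpoint patch works but doubles the constant to $2C_1=10$, so you obtain $[\nabla u]_{C^{0,\omega}(B_{\rho/2})}\le 10T$ instead of the stated $8T$, and correspondingly $10T+2S/\omega(\rho)$ and $12T+4\|u\|_{L^\infty(B_R)}/(\rho\,\omega(\rho))$ in place of \eqref{eq:global-S} and \eqref{eq:global-noS}.

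The paper avoids all of this by symmetrizing the comparison itself: for $x,y\in B_\sigma$ with $d:=|x-y|\le\rho$ it considers the affine function $H:=\ell_x-\ell_y$ on the ball $B_{d/2}(z_0)$ centered at the midpoint $z_0=(x+y)/2$; every point of that ball is within distance $d$ of both $x$ and $y$, so \eqref{eq:H} gives $\sup_{B_{d/2}(z_0)}|H|\le 2T\,d\,\omega(d)$, and the elementary bound $|\nabla H|\le\frac{2}{r}\sup_{B_r(z_0)}|H|$ for affine $H$, applied with $r=d/2$, yields $|\nabla u(x)-\nabla u(y)|\le 8T\,\omega(d)$ for every $d\le\rho$ in one stroke, with no subadditivity and no case split at $\rho/2$; the global bounds then follow by splitting only at $d=\rho$. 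Your own cancellation argument can be repaired the same way: recentre the two test points at the midpoint (take $z_\pm=z_0\pm s\,e$ with $s=d/2$), which even gives the constant $4T$ for all $d\le\rho$. As written, however, your proof does not establish \eqref{eq:holder-semi}, \eqref{eq:global-S} or \eqref{eq:global-noS} with the stated constants.
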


\begin{proof}
	First of all, we observe that \eqref{eq:H} implies differentiability at every $x_0\in B_\sigma$. Indeed, we have
	\[
	\frac{|u(x)-u(x_0)-p(x_0)\cdot(x-x_0)|}{|x-x_0|}\le T\,\omega(|x-x_0|)\to0, \quad\text{as }\ x\to x_0,
	\]
	and, in particular, $\nabla u(x_0)=p(x_0)$.
    
    To prove \eqref{eq:Linf}, fixed $x_0\in B_\sigma$, $e\in\mathbb S^{n-1}$, we set $s:=\rho/2$, $x_\pm:=x_0\pm se$.
	From \eqref{eq:H} at the center $x_0$, we write
	\[
	u(x_\pm)=\ell_{x_0}(x_\pm)+E_\pm\qquad\text{with}\qquad |E_\pm|\le T\,s\,\omega(s).
	\]
	Hence, since $\ell_{x_0}$ is affine, we get
	\[
	2s\,p(x_0)\!\cdot\!e=\ell_{x_0}(x_+)-\ell_{x_0}(x_-)=\big(u(x_+)-u(x_-)\big)-\big(E_+-E_-\big),
	\]
	which implies
	\[
	|p(x_0)\!\cdot\!e|\le \frac{|u(x_+)-u(x_-)|}{2s}+T\,\omega(s)\le \frac{2\|u\|_{L^\infty(B_R)}}{\rho}+T\,\omega(\rho/2).
	\]
	Then, taking the supremum over all directions $e\in\mathbb{S}^{n-1}$, we get \eqref{eq:Linf}.
	
	For the Hölder control, let $x,y\in B_\sigma$ with $d:=|x-y|\le \rho$ and set $z_0:=(x+y)/2$. Let us consider the affine difference
\[
H := \ell_x - \ell_y.
\]
Then $\nabla H = \nabla u(x) - \nabla u(y)$. For any $z \in B_{d/2}(z_0)$ we have $|z - x| \le d$ and $|z - y| \le d$, and recalling \eqref{eq:H}, we obtain
\[
|H(z)| \le |u(z) - \ell_x(z)| + |u(z) - \ell_y(z)| \le 2T\,d\,\omega(d).
\]
	Furthermore, since $H$ is affine, for every $r>0$, we have
	\begin{equation}\label{eq:affine}
		|\nabla H|=\frac{1}{r}\sup_{w\in\partial B_r(z_0)}|H(w)-H(z_0)|\le \frac{2}{r}\,\|H\|_{L^\infty(B_r(z_0))}.
	\end{equation}
	So, choosing $r=d/2$ yields
	\begin{equation}\label{est-loc-grad}
	|\nabla u(x)-\nabla u(y)|=|\nabla H|\le \frac{2}{d/2}\,\big(2T\,d\,\omega(d)\big)=8\,T\,\omega(d).
	\end{equation}
	If $B_{\rho/2}\subset B_\sigma$, taking the supremum over $x\neq y$ in $B_{\rho/2}$, we obtain \eqref{eq:holder-semi}.

	Finally, let us prove the global estimate \eqref{eq:global-S} under the assumption \eqref{eq:Sbound}. Let $x,y\in B_\sigma$. If $|x-y|\le \rho$, by \eqref{est-loc-grad}, we have
	\[
	\frac{|\nabla u(x)-\nabla u(y)|}{\omega(|x-y|)}\ \le\ 8T.
	\]
	If $|x-y|>\rho$, then $|\nabla u(x)-\nabla u(y)|\le 2S$ and $\omega(|x-y|)\ge \omega(\rho)$, hence
	\begin{equation}\label{use-S-1}
	\frac{|\nabla u(x)-\nabla u(y)|}{\omega(|x-y|)}\ \le\ \frac{2S}{\omega(\rho)}.
	\end{equation}
	Therefore,
	\begin{equation}\label{use-S-2}
	[\nabla u]_{C^{0,\omega}(B_\sigma)}
	= \sup_{x\neq y}\frac{|\nabla u(x)-\nabla u(y)|}{\omega(|x-y|)}
	 \le \max\left\{\,8T,\ \frac{2S}{\omega(\rho)}\right\}
	 \le 8T+\frac{2S}{\omega(\rho)},
	\end{equation}
	which is \eqref{eq:global-S}. Without the assumption \eqref{eq:Sbound}, by replacing $S$ in \eqref{use-S-1} and in \eqref{use-S-2} with the bound from \eqref{eq:Linf}, we obtain \eqref{eq:global-noS}.
\end{proof}

\begin{remark}[Power-type modulus of continuity $\omega(t)=t^\gamma$]\label{power-modulus}
	Assume $\omega(t)=t^\gamma$ with some $\gamma\in (0,1]$. Then \eqref{eq:H} is a uniform pointwise $C^{1,\gamma}$ expansion and the theorem yields $u\in C^{1,\gamma}(B_\sigma)$ with explicit bounds:
	\[
	\|\nabla u\|_{L^\infty(B_\sigma)}\ \le\ \frac{2\|u\|_{L^\infty(B_R)}}{\rho}\ +\ T\,\Big(\tfrac{\rho}{2}\Big)^{\!\gamma},
	\qquad
	[\nabla u]_{C^{0,\gamma}(B_{\rho/2})}\ \le\ 8\,T\ \quad \text{if }B_{\rho/2}\subset B_\sigma.
	\]
	If $\sup_{B_\sigma}|\nabla u|\le S$, then by \eqref{eq:global-S}, we also have
	\[
	[\nabla u]_{C^{0,\gamma}(B_\sigma)}\ \le\ 8\,T\ +\ \frac{2S}{\rho^{\gamma}}
	\ \le\ 8\Big(1+\rho^{-\gamma}\Big)\big(T+S\big),
	\]
	and, in general by \eqref{eq:global-noS}, it follows
	\[
	[\nabla u]_{C^{0,\gamma}(B_\sigma)}\ \le\ 10\,T\ +\ \frac{4\,\|u\|_{L^\infty(B_R)}}{\rho^{1+\gamma}}
	\ \le\ 10\Big(1+\rho^{-(1+\gamma)}\Big)\big(T+\|u\|_{L^\infty(B_R)}\big).
	\]
	Note that all constants are independent of $\sigma$.
\end{remark}

The previous result provides a quantitative passage from pointwise \(C^{1,\omega}\) expansions to classical \(C^{1,\omega}\) regularity. 
We now show that, conversely, a uniform Campanato control yields such pointwise expansions, thus connecting the two notions of \(C^{1,\gamma}\) regularity.

\begin{proposition}\label{prop:campanato-uniform-taylor}
	Fix $\gamma\in(0,1]$. Suppose $u:B_1\to\mathbb R$ satisfies
	\begin{equation}\label{eq:campanato}
	[u]_{1+\gamma,B_1}
	:=\sup_{\rho>0,\ x\in B_1}\ \inf_{\ell\ \mathrm{affine}}
	\sup_{z\in B_\rho(x)} \rho^{-1-\gamma}\,|u(z)-\ell(z)|
	\ \le A.
	\end{equation}	Then for every $x_0\in B_{1/2}$ there exists an affine function
	\[
	\ell_{x_0}(x)=u(x_0)+p(x_0)\cdot(x-x_0)
	\]
	such that
	\[
	|u(x)-\ell_{x_0}(x)|\ \le\ C(n,\gamma)\,A\,|x-x_0|^{1+\gamma}
	\qquad\text{for all }x\in B_{1/4}(x_0).
	\]
\end{proposition}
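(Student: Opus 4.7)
The plan is to run the classical dyadic Campanato–to–Taylor argument. Fix $x_0\in B_{1/2}$ and a radius $r\le 1/2$ so that $B_r(x_0)\subset B_1$, and set $\rho_k:=2^{-k}r$ for $k\ge 0$. By hypothesis \eqref{eq:campanato}, for each $k$ I can pick an affine function
\[
\ell_k(x)=a_k+p_k\cdot(x-x_0)\qquad\text{with}\qquad \sup_{B_{\rho_k}(x_0)}|u-\ell_k|\ \le\ A\,\rho_k^{1+\gamma}.
\]
The first step is to show that the sequences $(a_k)$ and $(p_k)$ are Cauchy. On the smaller ball $B_{\rho_{k+1}}(x_0)$ both $\ell_k$ and $\ell_{k+1}$ approximate $u$, hence
\[
\sup_{B_{\rho_{k+1}}(x_0)}|\ell_k-\ell_{k+1}|\ \le\ A(\rho_k^{1+\gamma}+\rho_{k+1}^{1+\gamma})\ \le\ 2A\,\rho_k^{1+\gamma}.
\]
Since $\ell_k-\ell_{k+1}$ is itself affine, evaluating at the center gives $|a_k-a_{k+1}|\le 2A\rho_k^{1+\gamma}$, and comparing the centered value with the sup on $B_{\rho_{k+1}}(x_0)$ gives the standard gradient estimate $|p_k-p_{k+1}|\le C\,\rho_k^{1+\gamma}/\rho_{k+1}=C'A\,\rho_k^{\gamma}$, with an explicit dimensionless $C'$. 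This is the only place where a little care is needed.

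Summing the geometric series $\sum_k \rho_k^{1+\gamma}$ and $\sum_k \rho_k^{\gamma}$, both finite since $\gamma>0$, yields $a_k\to a_\infty$ and $p_k\to p_\infty\in\mathbb R^n$. Passing to the limit in $|u(x_0)-a_k|\le A\rho_k^{1+\gamma}$ identifies $a_\infty=u(x_0)$, so the candidate limit affine function is
\[
\ell_{x_0}(x):=u(x_0)+p_\infty\cdot(x-x_0).
\]
Moreover the telescoping bounds yield $|p_k-p_\infty|\le C''A\,\rho_k^\gamma$ with $C''=C''(\gamma)$.

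Finally, I estimate $|u(x)-\ell_{x_0}(x)|$ for $x\in B_{r/2}(x_0)$. Given $x\neq x_0$ in this ball, I choose $k\ge 1$ so that $\rho_{k+1}\le |x-x_0|<\rho_k$. Then the triangle inequality
\[
|u(x)-\ell_{x_0}(x)|\ \le\ |u(x)-\ell_k(x)|+|a_k-u(x_0)|+|(p_k-p_\infty)\cdot(x-x_0)|
\]
controls the three terms by $A\rho_k^{1+\gamma}$, $A\rho_k^{1+\gamma}$, and $C''A\rho_k^\gamma|x-x_0|$, respectively. Using $\rho_k\le 2|x-x_0|$, each piece is bounded by a constant multiple of $A|x-x_0|^{1+\gamma}$, with the constant depending only on $\gamma$ (and, through the initial dyadic setup, on $n$). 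Choosing $r=1/2$ makes the estimate valid on $B_{1/4}(x_0)$ as required, and taking $x=x_0$ is trivial. The main (modest) obstacle is the comparison of two affine functions on nested balls, specifically extracting separate control of the constant and linear parts from an $L^\infty$ bound on their difference; everything else is bookkeeping of geometric series.
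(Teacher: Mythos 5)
Your proposal is correct and follows essentially the same route as the paper: near-optimal affine approximations on dyadic balls centered at $x_0$, Cauchy estimates for the linear and constant coefficients obtained by comparing consecutive affine functions on the smaller ball, telescoping geometric series, identification of the limiting constant with $u(x_0)$, and a three-term triangle inequality at the dyadic scale comparable to $|x-x_0|$. The only nitpick is that the infimum in \eqref{eq:campanato} need not be attained a priori, so you should either select $\ell_k$ with error at most $2A\,\rho_k^{1+\gamma}$ (as the paper does) or justify attainment by finite-dimensionality; this only affects the constants.
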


\begin{proof}
	By the definition of the Campanato seminorm (and since the infimum need not be attained), for every $x\in B_1$ and $r>0$ with $B_r(x)\subset B_1$, there exists an affine function $\ell$ such that
	\[
	\sup_{B_r(x)}|u-\ell|\ \le\ 2A\,r^{1+\gamma}.
	\]
    Moreover, \eqref{eq:campanato} readily implies the continuity of $u$.
	Fix $x_0\in B_{1/2}$ and set $r_k:=2^{-k}$. For each $k\geq1 $ we have $B_{r_k}(x_0)\subset B_1$. 
    Choose 
	\begin{equation} \label{eq:u-lk}
	    	\ell_k(x)=p_k\cdot(x-x_0)+c_k
	\qquad\text{with}\qquad
	\sup_{B_{r_k}(x_0)}|u-\ell_k|\ \le\ 2A\,r_k^{1+\gamma}.
	\end{equation}
	Since $B_{r_{k+1}}(x_0)\subset B_{r_k}(x_0)$, we have
	\begin{equation} \label{eq:suplk}
	    \sup_{B_{r_{k+1}}(x_0)}|\ell_k-\ell_{k+1}|
	\ \le\ \sup|u-\ell_k|+\sup|u-\ell_{k+1}|
	\ \le\ 4A\,r_k^{1+\gamma}.
	\end{equation}
	where $\ell_k-\ell_{k+1}=(p_k-p_{k+1})\cdot(x-x_0)+(c_k-c_{k+1})$.  
	Since $u$ is continuous and $B_{r_{k+1}}(x_0)\Subset B_{r_k}(x_0)\subset B_1$, the supremum in \eqref{eq:suplk} can be taken over the closure $\overline{B_{r_{k+1}}(x_0)}$.  
	Hence, we may evaluate \eqref{eq:suplk} at the boundary points
	\[
	x_\pm:=x_0\pm r_{k+1} e \in\partial B_{r_{k+1}}(x_0)\subset B_{r_k}(x_0)
	\qquad \text{where }\
	e:=\frac{p_k-p_{k+1}}{|p_k-p_{k+1}|}\in\mathbb S^{n-1},
	\]
	so that $x_\pm-x_0=\pm r_{k+1}e$. Evaluating \eqref{eq:suplk} at $x_+$ and $x_-$ gives
	\[
	\big|\,(p_k-p_{k+1})\cdot(r_{k+1}e)+(c_k-c_{k+1})\,\big|\ \le\ 4A\,r_k^{1+\gamma},
	\]
	\[
	\big|\,-(p_k-p_{k+1})\cdot(r_{k+1}e)+(c_k-c_{k+1})\,\big|\ \le\ 4A\,r_k^{1+\gamma}.
	\]
	Adding these two inequalities and using the triangle inequality on the left-hand side, we get
	\[
	2\,r_{k+1}\,|p_k-p_{k+1}|\ \le\ 8A\,r_k^{1+\gamma},
	\]
	hence, since $r_{k+1}=r_k/2$,
	\begin{equation} \label{eq:pk}
	|p_k-p_{k+1}|\ \le\ 8A\,r_k^{\gamma}.
	\end{equation}
	Evaluating \eqref{eq:suplk} instead at $x=x_0$ gives directly
	\begin{equation} \label{eq:ck}
	|c_k-c_{k+1}| = |\ell_k(x_0)-\ell_{k+1}(x_0)| \le 4A\,r_k^{1+\gamma}.
	\end{equation}
	For integers $m<n$, telescoping \eqref{eq:pk} and \eqref{eq:ck} and using the geometric sums of the dyadic radii, we obtain
	\begin{equation} \label{eq:estimpkck}
	    \begin{aligned}
	        	|p_m-p_n|&\le \sum_{j=m}^{n-1}|p_j-p_{j+1}|\le 8A\sum_{j=m}^{\infty} r_j^{\gamma}\le C(\gamma)\,A\,r_m^{\gamma}\xrightarrow[m\to\infty]{}0, \\ 
                	|c_m-c_n|&\le \sum_{j=m}^{n-1}|c_j-c_{j+1}|\le 4A\sum_{j=m}^{\infty} r_j^{1+\gamma}\le C(\gamma)\,A\,r_m^{1+\gamma}\xrightarrow[m\to\infty]{}0.
	    \end{aligned}
	\end{equation}
	Thus $(p_k)_{k \in \N}$ and $(c_k)_{k \in \N }$ are Cauchy sequences, therefore convergent. Write $p(x_0):=\lim_{k\to \infty} p_k$ and $c_\infty(x_0):=\lim_{k\to \infty} c_k$.  
	From \eqref{eq:u-lk} evaluated at $x_0$, we also have $|u(x_0)-c_k|\le 2A r_k^{1+\gamma}$, hence $c_\infty(x_0)=u(x_0)$.
	
	Let us define $\ell_{x_0}(x):=u(x_0)+p(x_0)\cdot(x-x_0)$.  
	For any $x\in B_{1/4}(x_0)$ choose $m$ with $r_{m+1}<|x-x_0|\le r_m$.  
 This leads to 
    \begin{align*}
        &|u(x)-\ell_{x_0}(x)| \leq |u(x)-\ell_{m}(x)| + |\ell_{m}(x)-\ell_{x_0}(x)| \\
        &\leq 2Ar_{m}^{1+\gamma} + |p_m-p(x_0)| |x-x_0| + |c_m-u(x_0)| \leq 2Ar_{m}^{1+\gamma} + C(\gamma)Ar_{m}^{1+\gamma} \leq  C(\gamma)\,A\,r_m^{1+\gamma}.
    \end{align*}
    where the first term is estimated by \eqref{eq:u-lk} and the second with \eqref{eq:estimpkck}.
    
	Since $r_m\sim |x-x_0|$, we conclude
	\[
	|u(x)-\ell_{x_0}(x)|\le C(n,\gamma)\,A\,|x-x_0|^{1+\gamma},
	\]
	which is the desired first--order Taylor expansion with uniform remainder in $B_{1/4}(x_0)$.
\end{proof}

\section*{Acknowledgements}
 The authors would like to thank David Jesus for fruitful discussions on the topic of this paper.

D.G. and E.M.M. are member of the {\em Gruppo Nazionale per l'Analisi Ma\-te\-ma\-ti\-ca, la Probabilit\`a e le loro Applicazioni} (GNAMPA) of the {\em Istituto Nazionale di Alta Matematica} (INdAM). D.G. is supported by  PRIN 2022 7HX33Z - CUP J53D23003610006, by University of Bologna funds for the project “Attività di collaborazione con università del Nord America” within the framework of “Interplaying problems in analysis and geometry” and by the INDAM-GNAMPA project 2025 “At The Edge Of Ellipticity” - CUP E5324001950001. E.M.M. is partially supported by INDAM-GNAMPA project 2025: \say{Ottimizzazione spettrale, geometrica e funzionale} - CUP: E5324001950001 and by the PRIN project 2022R537CS \say{NO$^3$--Nodal Optimization, NOnlinear elliptic equations, NOnlocal geometric problems, with a focus on regularity} - CUP J53D23003850006, founded by the European Union - Next Generation EU. 


\subsection*{Conflict of interest}
The authors confirm that they do not have actual or potential conflict of interest in
relation to this publication.

\subsection*{Author contributions}

All authors shared equally the writing and the reviewing of the manuscript.

\subsection*{Data availability statement} Not applicable.

\subsection*{Ethical Approval} Not applicable.

\bibliographystyle{plain} 
\bibliography{bib} 
\Addresses
\end{document}